\numberwithin{equation}{section}
\newtheorem{theorem}{Theorem}[section]
\newtheorem{lemma}[theorem]{Lemma}
\newtheorem{prop}[theorem]{Proposition}
\def \bpf {\begin{proof}}
\def \epf {\end{proof}}
\def \beq {\begin{equation*}}
\def \eeq {\end{equation*}}
\def \bsp{\begin{split}}
\def \esp{\end{split}}
\def \Lac {L^2_{\operatorname{ac}}(X)}
\def \CI {{C^\infty}}
\def \mca {{\mathcal A}}
\def \mcd {{\mathcal D}}
\def \mce {{\mathcal E}}
\def \mcf {{\mathcal F}}
\def \mcl {{\mathcal L}}
\def \mcm {{\mathcal M}}
\def \mcw {{\mathcal W}}
\def \mcp {{\mathcal P}}
\def \mcr {{\mathcal R}}
\def \mcs {{\mathcal S}}
\def \mcv {{\mathcal V}}
\def \mcz {{\mathcal Z}}
\def \txi {\widetilde{\xi}}
\def \mbc {{\mathbb C}}
\def \mh {{\mathbb H}}
\def \mbr {{\mathbb R}}
\def \mr {{\mathbb R}}
\def \mn {{\mathbb N}}
\def\ha {\frac{1}{2}}
\def \oq {\frac{1}{4}}
\def \tq {\frac{3}{4}}
\def \fq {\frac{5}{4}}
\def \nsq {\frac{n^2}{4}}
\def \intx {\mathring{X}}
\def \ff {{\operatorname{ff}}}
\def \dvol {\operatorname{dvol}}
\def \im {\operatorname{Im}}
\def \diag{\operatorname{Diag}}
\def \odiag{\overline{\operatorname{Diag}}}
\def \mrn {{\mathbb R}^n}
\def \mcn {\mathcal{N}}
\def \eps {\varepsilon}   
\def \vphi {\varphi}   
\def \la {\lambda}   
\def \La {\Lambda}
\def \del {\delta}   
\def \p {\partial}
\def \ps {\partial_s}
\def \novt {\frac{n}{2}}
\def \xo {{X\times_0 X}}
\def \bxo {{\p X \times_0 \p X}}
\def \beqq {\begin{equation}}
\def \eeqq {\end{equation}}
\def \mck {\mathcal{K}}
\def \mc {\mbc}
\def \wtla  {\widetilde{\Lambda}}
\def \one {\mathbbm{1}}
\numberwithin{equation}{section}
\begin{document}
\title[The Scattering Operator on  AHM]{The Scattering Operator on Asymptotically Hyperbolic Manifolds}
\author{Ant\^onio S\'a Barreto and Yiran Wang}
\address{Ant\^onio S\'a Barreto\newline
\indent Department of Mathematics, Purdue University \newline
\indent 150 North University Street, West Lafayette Indiana,  47907, USA}
\email{sabarre@math.purdue.edu}
\address{Yiran Wang \newline \indent University of Washington, Department of Mathematics  \newline \indent Box 354350  Seattle, WA 98195-4350 \newline \indent and \newline \indent Institute for Advanced Study, The Hong Kong University of Science and Technology\newline \indent Lo Ka Chung Building, Lee Shau Kee Campus, Clear Water Bay, Kowloon, Hong Kong}
\email{wangy257@math.washington.edu }
\keywords{Asymptotically hyperbolic manifolds,  radiation fields, scattering, scattering relation, wave equation. AMS mathematics subject classification: 35P25 and 58J50}
\begin{abstract}  We obtain a formula for the Schwartz kernel of the scattering operator in terms of the Schwartz kernel of the fundamental solution of the wave operator on asymptotically hyperbolic manifolds.  If there are no trapped geodesics, this formula is used  to show that the scattering operator is  a Fourier integral operator that quantizes the scattering relation.   
\end{abstract}
\maketitle

\section{Introduction}

The main  purpose of  this paper is to study the global microlocal nature of the scattering operator on  asymptotically hyperbolic manifolds, which we shall denote by  AHM.  We will use properties of the resolvent of the Laplace operator  on AHM to analyze the asymptotic behavior of solutions of the wave equation,  more specifically the radiation fields and the scattering operator, after Friedlander \cite{fried0,fried1}.  We prove three novel results:  First we obtain a formula for the Schwartz kernel of the scattering operator in terms of the kernel of the fundamental solution of the wave operator--no assumptions about trapping are necessary, see Theorem \ref{comp-red}.  We then restrict ourselves to the class of non-trapping manifolds (recall  that a complete Riemannian manifold  is non-trapping if any maximally extended geodesic leaves any compact subset in finite time and in both directions of the curve, in particular there are no closed geodesics) and we use Theorem \ref{comp-red}  and the microlocal structure of the Schwartz kernel of the fundamental solution of the wave operator to define the scattering relation on non-trapping  AHM and to prove  that the scattering operator on non-trapping AHM is a Fourier integral operator of an appropriate class which quantizes the scattering relation.  S\'a Barreto and Wunsch \cite{SW} studied the Schwartz kernel of the radiation fields acting on compactly supported functions for non-trapping asymptotically Euclidean and non-trapping asymptotically hyperbolic manifolds,  and showed that they  are Lagrangian distributions with respect to the sojourn relation. Our third result gives  a uniform description of the Schwartz kernel of the radiation fields on non-trapping AHM up to infinity, which refines the result of \cite{SW} for AHM.

  There  is a long series of papers on scattering theory  on AHM starting with the work of  Fadeev, Fadeev \& Pavlov and Lax \& Phillips \cite{Fa,FaPa,LP,lp0,lax}.    Agmon \cite{Agmon1}, Guillemin \cite{victor}, and Perry \cite{Perry1,Perry2} also studied scattering on hyperbolic quotients.   Mazzeo \& Melrose \cite{MM} constructed a parametrix for the resolvent for the Laplacian on general AHM and  used it to show that the resolvent continues meromorphically to $\mc,$ with the exception of a discrete set of points. Guillarmou showed that the points excluded in the meromorphic continuation of the resolvent by Mazzeo and Melrose  can in fact be essential singularities, unless additional assumptions are imposed on the metric.   Vasy \cite{Vasy} has given a new proof of the meromorphic continuation of the resolvent without constructing a parametrix  for metrics that satisfy the conditions imposed by Guillarmou. Scattering theory on AHM  was studied by Borthwick and Perry \cite{BoPe}, Guillop\'e \cite{Guillope}, Guillop\'e and Zworski \cite{GZ}, Melrose \cite{M},  Joshi and S\'a Barreto \cite{JS1} and Graham and Zworski \cite{GrZw}. S\'a Barreto \cite{sbhrf} studied the Friedlander radiation fields and  the scattering operator on AHM and proved that the scattering matrix  can be obtained from the scattering operator by conjugation with the Fourier transform.  He also studied the inverse problem and proved that the scattering operator determines the manifold (including its topology and $C^\infty$ structure) and the metric up to isometries that fix the boundary.  Hora and S\'a Barreto \cite {HoSa} showed that the scattering operator restricted to an open subset of the boundary determines the manifold and the metric up to isometries that fix the open subset where the scattering operator was defined. Isozaki and Kurylev \cite{IK} have also studied scattering and inverse scattering on AHM.
  
  Melrose, S\'a Barreto and Vasy \cite{MSV}, Chen and Hassell \cite{ChenHa} and Wang \cite{Wang} studied the semiclassical resolvent on AHM.   S\'a Barreto and Wang \cite{SaWang} studied the  semiclassical resolvent and the semiclassical scattering matrix on AHM and on conformally compact manifolds with variable curvature at infinity, and showed that the semiclassical  scattering matrix is a Fourier integral operator associated to the semiclassical scattering relation (which can be obtained from the scattering relation defined in this paper by setting $\sigma=-1$ and projecting in the $s$ variable).

If $\intx$ denotes the interior of a $C^\infty$ compact manifold with boundary $X$ of dimension $n+1,$  $\rho$ is a defining function of $\p X,$ and $g$ is a $C^\infty$ metric on 
$\intx$ such that $\rho^2 g$ is smooth and non-degenerate up to 
$\p X,$ the Riemannian manifold $(\intx,g)$ is called conformally compact.   According to  Mazzeo and Melrose \cite{MM} the manifold $(\intx,g)$ is  complete and its sectional curvatures approach $-\left| d\rho|_{\p X}\right|_{h_0}^2$ as $\rho\downarrow 0$ along any curve, where  $h_0=\rho^2 g|_{\p X}.$   In the particular case when
\begin{gather}
\displaystyle \left|d\rho|_{\p X}\right|_{h_0}=1, \label{asympt-curv}
\end{gather}  
$(\intx,g)$  is said to be an asymptotically hyperbolic manifold (AHM).    This class of manifolds includes the hyperbolic space and its quotients by certain groups of symmetry, see for example \cite{Agmon1,Perry1,Perry2}.

 It follows from the definition that if $(\intx,g)$ is a conformally compact manifold,  the metric $g$  determines a conformal structure on $\p X.$ It was  shown in \cite{Gr,JS1},  that if $(\intx,g)$ is an AHM, then for each member $h_0$ of the equivalence class of $\rho^2g|_{\p X},$ where $\rho$ is a boundary defining function,  there exists a unique boundary defining function $x$ in a neighborhood  $U$ of $\p X$  and a map $\Psi: [0,\eps)\times \p X\longrightarrow U$ such that
\begin{equation}\label{prod}
\Psi^* g = \frac{dx^2}{x^2} +\frac{ h(x)}{x^2},  \;\ h(0)=h_0, \text{ on }   [0,\eps) \times \p X,
\end{equation}
where $h(x)$ is a $C^\infty$ family of Riemannian metrics on $\p X$ parametrized by $x.$

As a motivation for the definition of the scattering relation on non-trapping AHM, we recall the definition of the scattering relation for non-trapping compactly supported metric perturbations of the Euclidean space.  Suppose that $g=\sum_{i,j=1}^n g_{ij}(x) dx_idx_i$ is a $C^\infty$ non-trapping Riemannian metric on $\mrn$ and suppose that $g_{ij}(x)=\del_{ij}$ if $x\not\in K\subset \mrn,$ where $K$ is compact.  Let $B$ be a bounded ball such that $K\subset B.$   A light ray coming from $\mrn\setminus B$ enters $B$ at a point $z \in \p B$ in the direction $\zeta,$ is scattered by the metric in $K$ and goes out of $B$  at a point $z'\in \p B$ with direction $\zeta',$ the map  $(z,\zeta)\longmapsto (z',\zeta')$  is called the scattering relation, see Fig.\ref{fig6N}.  One can also take into account the  time $t$ that it takes for the geodesic to travel across $B,$ which  is called the travel (or sojourn) time.  If the geodesics are parametrized by the arc-length, the sojourn time coincide with the distance between points on the boundary.   By assumption, travel times are always finite, since the geodesics are do not get trapped inside the region. This can also be described in terms of  the submanifold  
$\La\subset T^* (\mr \times \mr^n \times \mr^n) \setminus 0$ given by
$\Lambda=\{ (t,1,z,\zeta, z',\zeta'): (z,\zeta)=\exp(t H_q)(z',\zeta')\},$ where $q=\sum_{i,j=1}^n g^{ij}\xi_i\xi_j,$ $g^{-1}=(g^{ij})$ is the dual metric to $g.$  If one takes the time into account, the scattering relation is then  re-defined to be 
\begin{gather}
\mcs_R(B)= \Lambda\cap(T^*\mr \times T_{\p B}^* \mrn \times T_{\p B}^* \mrn).\label{scat-rel-rn}
\end{gather}
 The scattering relation is intrinsically related to the Dirichlet-to-Neumann Map (DNM) for the wave equation.  If $u(t,z)$ satisfies 
 \begin{gather*}
 (D_t^2-\Delta_g) u(t,z)=0 \text{ in } \mr \times B, \\
 u(0,z)=0, \p_t u(0,z)=0, \;\ u|_{\mr \times \p B}=f(t,z),
 \end{gather*}
 the  DNM  for the wave equation is the map
 \begin{gather*}
  C^\infty(\mr \times \p B) \longrightarrow  C^\infty(\mr \times \p B) \\
 f \longmapsto \p_\nu u|_{\mr \times \p B},
 \end{gather*}
  where $\p_\nu$ denotes the normal derivative with respect to the metric $g.$  Sylvester and Uhlmann \cite{gunther2}  showed that the  DNM for the wave equation determines the scattering relation on a manifold with boundary without  conjugate points, and Uhlmann \cite{gunther1} removed the assumptions on non-existence of caustics.   Uhlmann, Pestov and Uhlmann, Stefanov and Uhlmann \cite{pestov-uhlmann,steuhl1,steuhl2,steuhl3} studied the lens rigidity and boundary rigidity inverse problems, where one wants to obtain information about the manifold from its scattering relation.   In this article we show the analogue of Uhlmann's result for the scattering operator on non-trapping AHM and we also build a framework which makes it possible to pose the lens rigidity question for AHM.
 \begin{figure}
\scalebox{.7} 
{
\begin{pspicture}(0,-4.0482655)(12.461016,2.517555)
\psbezier[linewidth=0.04](0.9410156,0.37915665)(0.32046875,-0.4050127)(1.3198191,-0.8474328)(2.2210157,-1.2808434)(3.1222122,-1.7142539)(4.3186464,-2.2201526)(4.5210156,-1.2408433)(4.723385,-0.26153404)(1.5615625,1.163326)(0.9410156,0.37915665)
\pscircle[linewidth=0.04,dimen=outer](2.7310157,-0.63084334){2.71}
\psdots[dotsize=0.12](4.861016,1.0591567)
\usefont{T1}{ptm}{m}{n}
\rput(4.792471,0.64415663){$z'$}
\psdots[dotsize=0.12](1.6810156,1.8591566)
\usefont{T1}{ptm}{m}{n}
\rput(1.5924706,1.4041567){$z'$}
\psdots[dotsize=0.12](3.9210157,-3.0408432)
\usefont{T1}{ptm}{m}{n}
\rput(3.7624707,-3.5558434){$z$}
\usefont{T1}{ptm}{m}{n}
\rput(3.0124707,-0.7158434){$K$}
\usefont{T1}{ptm}{m}{n}
\rput(1.2124707,-3.1758432){$\p B$}
\usefont{T1}{ptm}{m}{n}
\rput(1.9824708,-2.1558433){$B$}
\pscircle[linewidth=0.04,dimen=outer](9.751016,-0.79084337){2.71}
\psline[linewidth=0.04cm,arrowsize=0.05291667cm 2.0,arrowlength=1.4,arrowinset=0.4]{->}(9.781015,1.8991567)(9.841016,-3.5208433)
\rput{-46.93517}(2.7843168,4.331723){\psarc[linewidth=0.04,arrowsize=0.05291667cm 2.0,arrowlength=1.4,arrowinset=0.4]{->}(6.381016,-1.0408434){2.5}{0.0}{102.991585}}
\usefont{T1}{ptm}{m}{n}
\rput(7.832471,-3.2358434){$z'$}
\psdots[dotsize=0.12](7.801016,1.0391567)
\psdots[dotsize=0.12](8.061016,-2.8808434)
\psdots[dotsize=0.12](9.801016,1.8991567)
\psdots[dotsize=0.12](10.721016,1.6991566)
\psdots[dotsize=0.12](12.181016,-1.9208434)
\usefont{T1}{ptm}{m}{n}
\rput(7.662471,1.2641567){$z$}
\psdots[dotsize=0.12](9.821015,-3.5008433)
\usefont{T1}{ptm}{m}{n}
\rput(9.992471,2.3241568){$z'$}
\usefont{T1}{ptm}{m}{n}
\rput(9.822471,-3.8758433){$z$}
\usefont{T1}{ptm}{m}{n}
\rput(11.092471,2.0041566){$z'$}
\usefont{T1}{ptm}{m}{n}
\rput(12.10247,-2.6758432){$z$}
\rput{163.1795}(27.075356,-2.344776){\psarc[linewidth=0.04,arrowsize=0.05291667cm 2.0,arrowlength=1.4,arrowinset=0.4]{->}(13.711016,0.82915664){3.11}{0.0}{78.25404}}
\usefont{T1}{ptm}{m}{n}
\rput(10.9824705,-2.1158433){$\mh^{n+1}$}
\rput{-214.79297}(24.32582,-7.9032974){\psarc[linewidth=0.04,arrowsize=0.05291667cm 2.0,arrowlength=1.4,arrowinset=0.4]{->}(13.401015,-0.14084335){3.28}{0.0}{90.555504}}
\usefont{T1}{ptm}{m}{n}
\rput(11.64247,-3.1358433){$z$}
\usefont{T1}{ptm}{m}{n}
\rput(10.942471,-3.6758432){$z$}
\rput{-46.449833}(4.0191407,4.764421){\psarc[linewidth=0.04,arrowsize=0.05291667cm 2.0,arrowlength=1.4,arrowinset=0.4]{->}(7.5610156,-2.3008432){0.72}{0.0}{166.91537}}
\rput{-50.431915}(3.91151,4.8646894){\psarc[linewidth=0.04,arrowsize=0.05291667cm 2.0,arrowlength=1.4,arrowinset=0.4]{->}(7.1210155,-1.7208433){1.48}{0.0}{141.8291}}
\usefont{T1}{ptm}{m}{n}
\rput(6.8224707,-0.21584335){$z$}
\usefont{T1}{ptm}{m}{n}
\rput(6.8424706,-1.6758434){$z$}
\psline[linewidth=0.04cm](4.841016,1.0591567)(3.5610156,-0.060843352)
\psbezier[linewidth=0.04](3.5810156,-0.06693031)(3.0410156,-0.68084335)(3.2210157,-1.0408434)(3.3410156,-1.7608434)
\psline[linewidth=0.04cm,arrowsize=0.05291667cm 2.0,arrowlength=1.4,arrowinset=0.4]{->}(3.3610156,-1.7608434)(3.9410157,-3.1208434)
\psline[linewidth=0.04cm](1.7010156,1.8591566)(2.3210156,0.51915663)
\psbezier[linewidth=0.04](2.330196,0.47915664)(2.6210155,-0.14084335)(2.1610155,-0.46084335)(1.6810156,-1.0008434)
\psline[linewidth=0.04cm,arrowsize=0.05291667cm 2.0,arrowlength=1.4,arrowinset=0.4]{->}(1.6610156,-1.0008434)(0.52101564,-2.2208433)
\usefont{T1}{ptm}{m}{n}
\rput(0.2224707,-2.3758433){$z$}
\usefont{T1}{ptm}{m}{n}
\rput(3.1024706,1.1241566){$g_{ij}=\del_{ij}$}
\end{pspicture} 
}
\caption{The scattering relation for non-trapping compactly supported perturbations of the Euclidean metric and for hyperbolic space.}
\label{fig6N}
\end{figure}
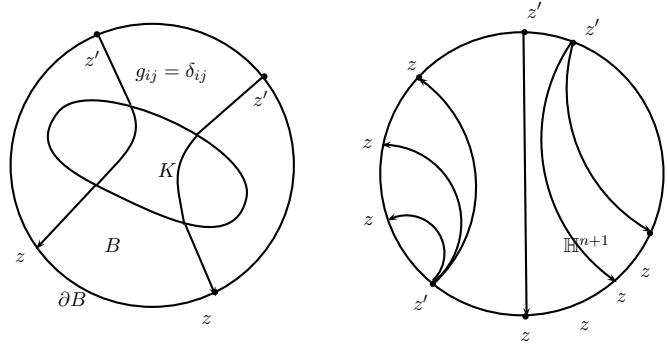

In the case of asymptotically hyperbolic manifolds, or even the hyperbolic space,  one faces several difficulties to define the scattering relation.  First, the wave operator corresponding to the metric is degenerate at the boundary,  the length of its bicharacteristics go to infinity as they approach the boundary,  the projections of the bicharacteristics, which are geodesics of the metric, always intersect $\p X$ orthogonally,  see Fig.\ref{fig6N}.   To define the analogue of the scattering relation on AHM, we first discuss the scattering operator,  which is the analogue of the DNM, defined in \cite{sbhrf},  and we begin by recalling the definitions of the radiation fields and the scattering operator from \cite{sbhrf}. Let $u(t,z)$ be the solution of
\begin{gather}
\begin{gathered}
\square u=\left(D_t^2-\Delta_g+\frac{n^2}{4} \right) u=0, \\
u(0,z)=f_1(z), \;\ \p_t u(0,z)=f_2(z), \;\   f_1, f_2 \in C_0^\infty(\intx).
\end{gathered}\label{CPWE}
\end{gather}
It was shown in \cite{sbhrf} that for  any choice of a boundary defining function $x$ such that \eqref{prod} holds, then for $z=(x,y),$  and for $s_+=t+\log x,$ and $s_-=t-\log x,$
\begin{gather}
\begin{gathered}
V_+(x,s_+,y)= x^{-\novt}  (\one_+ (t)u)(s_+- \log x, x,y) \in C^\infty( [0,\eps)_x \times \mr_{s_+} \times \p X ),\\
V_-(x,s_-,y)= x^{-\novt}  (\one_-(t) u)(s_-+ \log x, x,y) \in C^\infty( [0,\eps)_x \times \mr_{s_-} \times \p X ),
\end{gathered} \label{radf}
\end{gather}
where $\one_\pm (t)=1$ for $\pm t>0$ and $\one_\pm (t)=0$ for $\pm t<0.$ Following Friedlander \cite{fried0,fried1},  Lax \cite{lax} and  Lax and Phillips \cite{lp0}, the forward and backward radiation fields  for AHM were defined in \cite{sbhrf} as
\begin{gather}
\begin{gathered}
\mcr_+(f_1,f_2)(s_+,y)=  \p_{s_+}V_+(x,s_+,y)|_{\{x=0\}} \text{ and }  \mcr_-(f_1,f_2)(s_-,y)=  \p_{s_-}V_-(x,s_-,y)|_{\{x=0\}}.
\end{gathered}\label{radf1}
\end{gather}
Of course these operators depend on the choice of the boundary defining function $x,$ and we will pick one particular $x$ for which \eqref{prod} is satisfied.  One can modify the definition to make it independent of  the choice of $x$ by having these operators act on appropriate bundles, but we will not pursue this here.

  It was shown in \cite{sbhrf} that  the maps $\mcr_\pm$ have extensions
\begin{gather}
\begin{gathered}
\mcr_\pm: E_{ac}(X) \longrightarrow L^2(\mr \times \p X, ds \dvol_{h_0})\\
(f_1,f_2) \longmapsto \mcr_\pm(f_1,f_2),
\end{gathered} \label{radfs-0}
\end{gather}
 as isometric isometries, where  $E_{ac}(X)$ is the space of functions $(f_1,f_2)$ with finite energy which are orthogonal to the eigenfunctions of $\Delta_g,$ and  where $h_0 = x^2 g|_{\p X}$ is the metric on $\p X$ induced by $g$ and $x.$  While the restrictions $V_\pm|_{\{x=0\}}$ are well-defined, they are not necessarily  $L^2$ functions, and the reason for taking the derivative in $s_\pm$ of $V_\pm$ in the definition of $\mcr_\pm,$  is to make these maps unitary. 
 
We shall say that  an AHM $(\intx,g)$  is non-trapping if any maximally extended geodesic $\gamma(t) \rightarrow \p X$ as $\pm t\rightarrow \infty.$  S\'a Barreto and Wunsch \cite{SW}  proved  that, for non-trapping asymptotically Euclidean and asymptotically hyperbolic manifolds,  $K_{\mcr_{\pm}}(s,y,z'),$ 
 the Schwarz kernel of $\mcr_\pm,$ are Lagrangian distributions associated with the sojourn relation. Theorem \ref{radfs-th-N} below strengthens the result of \cite{SW} in the AHM case  by describing the behavior of $K_{\mcr_\pm}(s,y,z')$ as $z'\rightarrow \p X.$ 

The wave group $U(t)$ is the map
\begin{gather}
U(t)(f_1,f_2)= (u(t),\p_t u(t)), \label{wavegp}
\end{gather}
where $u(t)$ solves \eqref{CPWE}.  The operators $\mcr_\pm$ are  translation representations of $U(t)$ as in the Lax-Phillips theory \cite{lp0}, i.e.
\begin{gather}
\mcr_{\pm}(U(T)(f_1,f_2))(s_\pm,y)= \mcr_{\pm}(f_1,f_2)(s_\pm+T,y).\label{translrep}
\end{gather}

The scattering operator is defined to be the map
\begin{gather}
\begin{gathered}
\mathcal{S}:L^2(\mr_s \times\partial X)\longrightarrow L^2(\mr_s \times\partial X),\\ \mathcal{S}=\mathcal{R}_{+}\circ\mathcal{R}_{-}^{-1},
\end{gathered}\label{scatmat2}
\end{gather}
which is unitary in $L^2(\partial X\times \mathbb{R})$ and,  in view of \eqref{translrep},  commutes with translations in the $s$ variable.   Therefore, it is a convolution operator in the $s$-variable, and  there exists $\mck \in C^{-\infty}(\mr \times \p X \times \p X)$ such that the Schwartz kernel of $\mcs$ satisfies
\begin{gather}
K_{\mcs}(s,y,s',y')= \mck(s-s',y,y'). \label{kerscatmat}
\end{gather}

 Let $E_+(t,z,z')$  and $E_-(t,z,z')$ denote the Schwartz kernel of forward and backward fundamental solutions of the wave equation. In other words
 \begin{gather}
 \begin{gathered}
 (D_t^2-\Delta_g+\nsq) E_\pm (t,z,z')= \delta(z,z') \del(t), \\
 E_+(t,z,z') \text{ supported in } t\geq d_g(z,z'), \\ 
 E_-(t,z,z') \text{ supported in } t\leq - d_g(z,z'), \\ 
 \text{ where } d_g \text{ is the distance function of the metric } g.
 \end{gathered}\label{CPW}
 \end{gather}

We will first prove a formula connecting the kernel of the scattering operator to  $E_+(t,z,z'):$
\begin{theorem}\label{comp-red}  Let  $(\intx,g)$ be an AHM and let $E_+(t,z,z')$ be  defined in \eqref{CPW}.  Let  $x$ be a defining function of $\p X$ such that \eqref{prod} holds, and denote $z=(x,y)$ and $z'=(x',y').$  If $K_{\mcs}(s,y,s',y')$ is the Schwartz  kernel of  the scattering operator $\mcs,$ then 
\begin{gather}
\begin{gathered}
 K_{\mcs}(s,y,s',y')=\lim_{x\rightarrow 0} \lim_{x'\rightarrow 0} \ha (xx')^{-\novt} \p_s E_+ (s-s'-\log x- \log x',z,z'),
\end{gathered}\label{kerscat-0}
\end{gather}
in the sense that  there exists $\mcm(t,z,s',y')$ such that for $\psi\in C_0^\infty(\intx) \cap \Lac$ and $F\in L^2(\mr \times \p X),$ 
\begin{gather}
\begin{gathered}
\lim_{x'\rightarrow 0} {x'}^{-\novt} \int_{X} \p_s E_+(t-s'-\log x', z,z')  \psi(z) \dvol_{g(z)}  =  \int_{X} \mcm(t,z, s',y')  \psi(z) \dvol_{g(z)},  \\
\text{ and } 
\lim_{x\rightarrow 0} x^{-\novt} \int_{\mr \times \p X} \mcm(s-\log x, x,y, s', y') F(s',y') \; ds' \dvol_{h_0(y')}= \\ \int_{\mr \times \p X} K_{\mcs}(s,y,s',y') F(s',y') ds' \dvol_{h_0(y')}.
\end{gathered}\label{sep-limits}
\end{gather}
\end{theorem}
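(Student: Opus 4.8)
The plan is to peel off the two boundary limits in \eqref{kerscat-0} one at a time, matching each with Friedlander's construction of a radiation field, and then to use $\mcs=\mcr_+\circ\mcr_-^{-1}$ together with the unitarity of $\mcr_\pm$. Throughout I would set $P=\Delta_g-\tfrac{n^2}{4}$, so that $\square=-(\p_t^2+P)$ and the solution of \eqref{CPWE} with data $(f_1,f_2)$ is $u=\cos(t\sqrt P)f_1+\frac{\sin(t\sqrt P)}{\sqrt P}f_2$ by the functional calculus ($\cos(t\sqrt z)$ and $\frac{\sin(t\sqrt z)}{\sqrt z}$ being entire in $z$, so the sign of $P$ causes no ambiguity). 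Two elementary facts are used repeatedly: first,
\[
E_\pm(t,z,z')=\mp\,\one_\pm(t)\,K_{\frac{\sin(t\sqrt P)}{\sqrt P}}(z,z'),\qquad \p_s E_+(t,z,z')=-\,\one_+(t)\,K_{\cos(t\sqrt P)}(z,z'),
\]
so that both $E_+$ and $\p_s E_+$ are symmetric in $(z,z')$; and second, a short computation with the jump relations at $t=0$ shows that for $\psi\in\smcomp(\intx)$ the solution $v_\psi$ of \eqref{CPWE} with data $(0,\psi)$ satisfies $\one_\pm(t)\,v_\psi(t,z)=\mp\int_X E_\pm(t,z,z')\,\psi(z')\,\dvol_{g(z')}$.

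For the first limit, fix $\psi\in\smcomp(\intx)\cap\Lac$. By the symmetry of $E_+$ and the representation formula, $\int_X E_+(\tau,z,(x',y'))\,\psi(z)\,\dvol_{g(z)}=-\one_+(\tau)v_\psi(\tau,(x',y'))$, and substituting $\tau=t-s'-\log x'$ and unwinding the definition of $V_+$ in \eqref{radf} gives
\[
{x'}^{-\novt}\int_X \p_s E_+(t-s'-\log x',z,(x',y'))\,\psi(z)\,\dvol_{g(z)}=-\p_{s_+}V_+(x',t-s',y')[v_\psi],
\]
which converges, as $x'\to 0$, to $-\mcr_+(0,\psi)(t-s',y')$ by \eqref{radf1}. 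This both identifies the first line of \eqref{sep-limits} and exhibits the limiting kernel $\mcm$: for $x'$ small the source point $(x',y')$ has left every compact subset of $\intx$ and, since $-\log x'\to\infty$, the retarded support condition $t\ge d_g(\cdot,(x',y'))$ is inactive on compacts at the relevant times, so each $\p_s E_+(t-s'-\log x',\,\cdot\,,(x',y'))$ solves $\square(\cdot)=0$ in $(t,z)$ there; using the boundary structure of the resolvent of Mazzeo and Melrose \cite{MM} — equivalently, Friedlander's analysis of the radiation field applied to a regularized point source, as in \cite{sbhrf,SW} — the rescaled kernel converges in $\CI_{\loc}(\mr_t\times\intx)$ to a limit $\mcm(t,z,s',y')$ which solves $\square\mcm=0$ on $\mr_t\times\intx$.

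For the second limit, $\mcm(t,\cdot,s',y')$ is now a solution of the wave equation in $(t,z)$, and the substitution $(t,z)\mapsto(s-\log x,(x,y))$ with weight $x^{-\novt}$ is precisely the one defining its forward radiation field, the $\p_s$ already present (inherited from $\p_s E_+$) playing the role of the derivative in \eqref{radf1}. Thus the second line of \eqref{sep-limits} asserts that the forward radiation field in $(t,z)$ of $\mcm(\,\cdot\,,\cdot\,,s',y')$ is $K_{\mcs}(\,\cdot\,,\cdot\,,s',y')$, and by $\mcs=\mcr_+\circ\mcr_-^{-1}$ this is the statement that $\mcm(\,\cdot\,,\cdot\,,s',y')$ is, up to a constant and a derivative, the Poisson‑type solution $\mcr_-^{-1}$ applied to $\del(\cdot-s')\del_{y}(y')$. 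To prove this and reach the weak formulation, I would test against $F=\mcr_-(a)$ and $G=\mcr_+(b)$ with $a,b$ ranging over the span of $\{(0,\psi),\,(\psi,-P\psi):\psi\in\smcomp(\intx)\cap\Lac\}$ — a set dense in $E_{ac}(X)$ by \eqref{translrep} and the density of $\smcomp(\intx)\cap\Lac$ — and use $\langle\mcs F,G\rangle=\langle a,b\rangle_{E}$, which reduces the identification to the unitarity of $\mcr_\pm$ and the first‑limit computation. The factor $\ha$ in \eqref{kerscat-0} comes out of $\p_s E_+=-\one_+(t)K_{\cos(t\sqrt P)}$ together with $\cos(t\sqrt P)=\ha\bigl(e^{it\sqrt P}+e^{-it\sqrt P}\bigr)$: only the outgoing exponential survives the forward radiation‑field limit, and pairing it against the incoming profile also accounts for the derivative in $s$ (and for the signs).

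The hard part is the second step — more precisely, the behaviour of $E_+$ when \emph{both} spatial arguments tend to $\p X$, i.e. the existence and identification of the iterated limit $\lim_{x\to0}\lim_{x'\to0}(xx')^{-\novt}\p_s E_+(s-s'-\log x-\log x',(x,y),(x',y'))$ in a topology strong enough that the outer limit still makes sense. I expect this to require either Fourier transforming in $t$ and using the Mazzeo–Melrose description of the resolvent kernel near the corner $\p X\times\p X$, or running Friedlander's radiation‑field construction for a regularized point source while controlling uniformity as the source escapes to $\p X$; this, together with the bookkeeping of the constant $\ha$ and the signs through the two limits, is where the real work lies.
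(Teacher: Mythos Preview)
Your first limit is correct: by the symmetry of $E_+$ in $(z,z')$, pairing with $\psi(z)$ produces (minus) the solution $v_\psi$ evaluated at $z'=(x',y')$, and the $x'\to 0$ limit is indeed its forward radiation field at $(t-s',y')$. But the paper reaches the first limit by a different route, and that route is exactly what you are missing for the second. Rather than using symmetry, the paper works with $W(t)=e^{itA}$, $A=\sqrt{\Delta_g-n^2/4}$, and uses the \emph{group law} $W(t-s'-\log x')=W(t)\circ W(-s'-\log x')$ to factor the kernel through an intermediate point $w\in X$. The inner factor ${x'}^{-\novt}\overline{W(s'+\log x',w,z')}$ has limit $2\,\overline{\one_+(D_{s'})\p_{s'}\mce_-(s',w,y')}$ as $x'\to 0$, and this gives $\mcm$ \emph{explicitly} as
\[
\mcm_1(t,z,s',y')=2\int_X W(t,z,w)\,\overline{\one_+(D_{s'})\p_{s'}\mce_-(s',w,y')}\,\dvol_{g(w)}.
\]
Now the second limit is routine: pairing with $F$ in $(s',y')$ collapses the $(s',y')$-factor to $f(w)=(\one_+(D_s)\p_s\mce_-)^*F\in\Lac$, and $\lim_{x\to 0}x^{-\novt}\int_X W(s-\log x,x,y,w)f(w)\,\dvol_{g(w)}$ is again a radiation-field limit, now of $e^{itA}f$, well-defined because $\mcr_\pm$ extend from $C_0^\infty$ to $E_{ac}$ data. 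The half $e^{-itA}$ is handled identically and contributes the $\one_-(D_s)$ piece; both halves survive (so your ``only the outgoing exponential survives'' is not accurate --- each lands in a different frequency-sign component), and summing them rebuilds $\cos(tA)=\p_s\bigl(A^{-1}\sin tA\bigr)$, hence $\p_s E_+$ and the factor $\tfrac12$.

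Your proposal lacks this factorization, and that is the genuine gap. After your first step you know only $\langle\mcm(t,\cdot,s',y'),\psi\rangle_z=-\mcr_+(0,\psi)(t-s',y')$, which determines $\mcm$ as a distribution in $z$; the second line of \eqref{sep-limits} asks you to pair $\mcm$ with $F$ in $(s',y')$ \emph{first} and then take the boundary limit in $z=(x,y)$, and you have no description of $\int\mcm(t,z,s',y')F(s',y')\,ds'\,\dvol_{h_0}$ as a solution of the wave equation in $(t,z)$ with $E_{ac}$ data to which a radiation-field limit applies. Testing further against $G=\mcr_+(b)$ in $(s,y)$ is not what \eqref{sep-limits} asserts. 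The paper also needs a separate lemma, proved by energy estimates, showing that the Fourier transform in $s$ commutes with the $x\to 0$ limit; this is what justifies the key identities $\lim_{x\to 0}x^{-\novt}(\one_+ e^{\pm itA}\psi)(s-\log x,x,y)=-2\one_\pm(D_s)\p_s\mce_+(\psi)$ (and their backward companions) linking the radiation fields of $e^{\pm itA}$ to the spectral decomposition $\mcs=-(\one_+(D_s)\p_s\mce_+)\circ(\one_+(D_s)\p_s\mce_-)^*-(\one_-(D_s)\p_s\mce_+)\circ(\one_-(D_s)\p_s\mce_-)^*$.
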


The analogue of \eqref{kerscat-0} for asymptotically Euclidean manifolds is conjectured by Friedlander  on page 15 of \cite{fried1}.  We prove \eqref{kerscat-0} in the AHM case, but its analogue should be true for manifolds where the radiation fields are well defined and unitary, as for example asymptotically Euclidean manifolds and asymptotically complex hyperbolic manifolds. The analogue of this formula on the frequency side was proved in \cite{GZ,JS1}, see equation \eqref{scatmat-JS} below.  Of course, the problem is to show that the Fourier transform commutes with the limits.

This formula does not say much about the microlocal  nature of $K_{\mcs}(s,y,s',y').$  In the case of non-trapping AHM we can prove that  the limit  \eqref{kerscat-0} is a Lagrangian distribution of an appropriate class on $\mr \times \p X \times \p X \times \mr,$ but before we can state our result, we need to recall the definition of the zero stretched product introduced by Mazzeo and Melrose \cite{MM}.  Let 
\begin{gather*}
\diag= \{(z,z') \in \intx\times \intx: \; z=z'\}
\end{gather*}
denote the diagonal in $\intx \times \intx.$ The closure of this submanifold meets the boundary of $X\times X$ at
\beq
\p \odiag = \{(z, z) \in \p X\times \p X\} = \odiag \cap(\p X\times \p X).
\eeq
The  $0$-stretched product $\xo$ is the blow-up of the manifold  $X\times X$ along the submanifold $\p \odiag.$   As a set, $\xo$ is given by
\beq
X\times_0 X = (X\times X)\backslash \p\odiag \sqcup S_{++}(\p \odiag),
\eeq
where $S_{++}(\p\odiag)$ denotes the inward pointing spherical bundle of $T_{\p\odiag}^*(X\times X).$   $\xo$ is then equipped with a topology and smooth structure  of a manifold with corners such that the blow-down map 
\beq
\beta_0: X\times_0 X \rightarrow X\times X
\eeq 
is $C^\infty.$ Of course, in the interior of $\xo,$ $\beta_0$ is a diffeomorphism between open $C^\infty$ manifolds.

The manifold $\xo$  has three boundary hypersurfaces, which we denote the left face  $L=\overline{\beta_0^{-1}(\p X\times \intx)},$  the right face $R=\overline{\beta_0^{-1}(\intx \times \p X)}$  and the front face $\ff= \overline{\beta_0^{-1}(\p\odiag)}.$ The lifted diagonal is denoted by $\diag_0=\overline{\beta_0^{-1}(\odiag)},$ see Fig.\ref{fig1}.  
We shall use $\rho_\bullet$  to denote a defining function of the face $\bullet=R,L,\ff.$

By abuse of notation, we will also denote

\begin{gather*}
\beta_0: \mr_t \times \xo \longrightarrow \mr_t\times X \times X \\
(t, m) \longmapsto (t, \beta_0(m)).
\end{gather*}

\begin{figure}
\centering
\scalebox{0.7}
{
\begin{pspicture}(0,-3.8529167)(17.765833,3.8729167)
\rput(3.7458334,-0.14708334){\psaxes[linewidth=0.04,labels=none,ticks=none,ticksize=0.10583333cm](0,0)(0,0)(4,4)}
\rput(0.74583334,-3.1470833){\psaxes[linewidth=0.04,labels=none,ticks=none,ticksize=0.10583333cm](0,0)(0,0)(4,4)}
\psline[linewidth=0.04cm,arrowsize=0.05291667cm 2.0,arrowlength=1.4,arrowinset=0.4]{->}(3.7458334,-0.14708334)(0.24583334,-3.6470833)
\psline[linewidth=0.04cm,dotsize=0.07055555cm 3.0]{*-}(2.2458334,-1.6470833)(4.7458334,3.1729167)
\psline[linewidth=0.04cm](0.74583334,0.85291666)(3.7458334,3.8529167)
\psline[linewidth=0.04cm](4.7458334,-3.1470833)(7.7458334,-0.14708334)
\rput(2.2458334,-1.6470833){\psaxes[linewidth=0.04,arrowsize=0.05291667cm 2.0,arrowlength=1.4,arrowinset=0.4,labels=none,ticks=none,ticksize=0.10583333cm]{->}(0,0)(0,0)(5,5)}
\usefont{T1}{ptm}{m}{n}
\rput(5.295833,2.6979167){$\odiag$}
\usefont{T1}{ptm}{m}{n}
\rput(2.6,-2){$\p\odiag$}
\usefont{T1}{ptm}{m}{n}
\rput(7.4058332,-1.8420833){$x'$}
\usefont{T1}{ptm}{m}{n}
\rput(2.0358334,3.3779166){$x$}
\usefont{T1}{ptm}{m}{n}
\rput(1.35,0.5179167){$X$}
\usefont{T1}{ptm}{m}{n}
\rput(4.1658335,-2.6820834){$X$}
\usefont{T1}{ptm}{m}{n}
\rput(0.97583336,-3.6020834){$y - y'$}
\rput(13.745833,-0.14708334){\psaxes[linewidth=0.04,labels=none,ticks=none](0,0)(0,0)(4,4)}
\rput(10.745833,-3.1470833){\psaxes[linewidth=0.04,labels=none,ticks=none](0,0)(0,0)(4,4)}
\psline[linewidth=0.04cm](13.745833,3.8529167)(10.745833,0.85291666)
\psline[linewidth=0.04cm](17.745832,-0.14708334)(14.745833,-3.1470833)
\psline[linewidth=0.04cm](11.865833,-2.0270834)(10.745833,-3.1470833)
\psarc[linewidth=0.04](13.425834,-2.0270834){1.56}{90.0}{180.0}
\psarc[linewidth=0.04](11.885834,-0.50708336){1.54}{268.5312}{1.27303}
\psline[linewidth=0.04cm](13.745833,-0.14708334)(13.425834,-0.48708335)
\psarc[linewidth=0.04](12.155833,-1.9770833){1.03}{28.855661}{82.11686}
\psline[linewidth=0.04cm,dotsize=0.07055555cm 3.0]{*-}(12.765833,-1.1470833)(14.725833,3.1129167)
\usefont{T1}{ptm}{m}{n}
\rput(15.245833,2.6579165){$\diag_0$}
\usefont{T1}{ptm}{m}{n}
\rput(14.445833,-1.9970833){\large $L$}
\usefont{T1}{ptm}{m}{n}
\rput(11.635834,0.8229167){\large $R$}
\usefont{T1}{ptm}{m}{n}
\rput(12.325833,-1.6020833){$\ff$}
\psline[linewidth=0.04cm,arrowsize=0.05291667cm 2.26,arrowlength=1.4,arrowinset=0.4]{->}(10.1258335,3.4129167)(8.105833,3.4129167)
\usefont{T1}{ptm}{m}{n}
\rput(9.155833,2.9029167){\large $\beta_0$}
\end{pspicture}
}
\caption{The $0$-blown-up space $X\times_0X$.}
\label{fig1}
\end{figure}
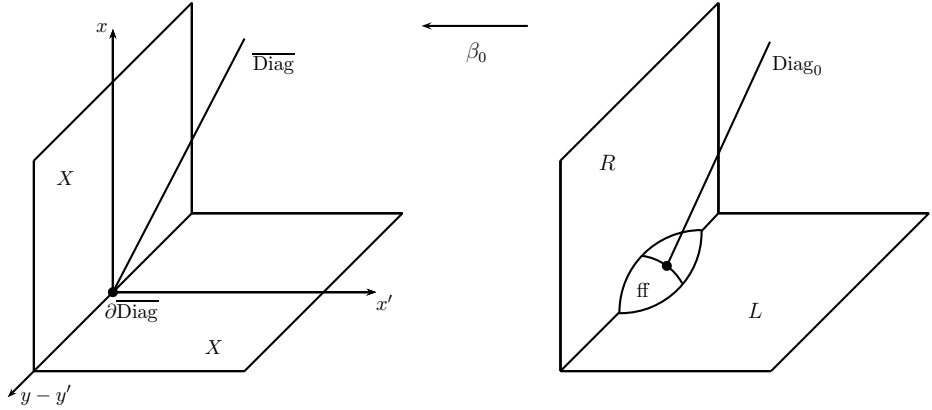

  We will work with the projection of $\xo$ to $\{\rho_L=0\}$  and to $\{\rho_R=0\},$ and  as observed in \cite{GrZw,GZ,JS1}, these define manifolds with corners $\p X\times_0 X$  and $ X \times_0 \p X$ obtained by blowing up $\p X\times \p X$  and $X \times \p X $ along the manifolds
\begin{gather*}
\p_L \odiag=\{(y,z'): y\in \p X, z'\in X, \; z'=(x',y)\}, \;\ \p_R \odiag=\{(z,y'): y'\in \p X, z\in X, \; z=(x,y')\},
\end{gather*}
respectively, and where 
\begin{gather}
\beta_{0L}= \beta_{0}|_{\{\rho_L=0\}} \text{ and }  \beta_{0R}= \beta_{0}|_{\{\rho_R=0\}} \label{betaol}
\end{gather}
are the associated blow-down maps.   As above, by abuse of notation, we shall also denote
\begin{gather*}
\beta_{0L}: \mr \times \p\xo \longrightarrow \mr \times \p X \times X \\
(t, \tilde m) \longmapsto (t, \beta_{0L}(\tilde m)) \\ \text{ and } \\
\beta_{0R}: X \times_0 \p X \times \mr \longrightarrow  X \times \p X  \times \mr \\
(\tilde m, t) \longmapsto (\beta_{0R}(\tilde m), t).
\end{gather*}

As also observed in \cite{GrZw,GZ,JS1}, the projection of $\xo$ to $\{\rho_R=\rho_L=0\}$ defines the manifold with boundary $\p X \times_0 \p X$ obtained by blowing $\p X \times \p X$ along its diagonal 
\begin{gather*}
\diag_\p=\{ (y,y') \in \p X \times \p X: \; y=y'\}.
\end{gather*}
 Again, by abuse of notation, we will use $\beta_{\p}$ to denote either of the blow-down maps
\begin{gather}
\begin{gathered}
\beta_{\p}: \p X \times_0 \p X \longrightarrow \p X \times \p X\\
\tilde m \longmapsto \beta_{\p}(\tilde m) \\
\text{ and } \\
\beta_{\p}: \mr_s \times  \bxo  \times \mr_s \longrightarrow \mr_s \times \p X \times \p X \times \mr_s \\
(s,\tilde m,s') \longmapsto (s,\beta_\p(\tilde m), s').
\end{gathered}  \label{defbbdry} 
\end{gather}

Now we can state our second result:
\begin{theorem}\label{scat-FIO}  Let $(\intx,g)$ be a non-trapping AHM. Fix a defining function $x$ of $\p X$ for which \eqref{prod} holds and let $K_{\mcs}$ be the Schwartz kernel of the corresponding  scattering operator.   Then
\begin{gather}
\begin{gathered}
\beta_{\p}^* K_{\mcs}(s,\tilde m,s')= \mca(s-s'-2\log \rho_{\ff_0},\tilde m), \\
\mca(s,\tilde m)  \in I^{\oq}(\mr\times \p X \times_0 \p X, \La_{\p+}^f,\Omega_{\mr\times \p X \times_0 \p X}^\ha)+  I^{\oq}(\mr\times \p X \times_0 \p X, \La_{\p-}^f,\Omega_{\mr\times \p X \times_0 \p X}^\ha),
\end{gathered}\label{kerscatmat1}
\end{gather}
where $\beta_{\p}$ is the map defined in \eqref{defbbdry},  $\Lambda_{\p\pm}^f$ are the Lagrangian submanifolds defined in \eqref{bdryla} and $\rho_{\ff_0}$ is a defining function of the boundary of $\p X \times_0 \p X.$ 
\end{theorem}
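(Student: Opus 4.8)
The plan is to derive Theorem~\ref{scat-FIO} from Theorem~\ref{comp-red} by transporting the microlocal structure of $E_+$ through the double limit in \eqref{kerscat-0}, using the $0$-stretched product to package the boundary behaviour uniformly. First I would recall from the Mazzeo--Melrose parametrix (and its wave-equation analogue, as used in \cite{SW}) that for a non-trapping AHM, the lifted kernel $\beta_0^* E_+(t,z,z')$, after conjugation by the natural powers of the boundary defining functions $\rho_L,\rho_R,\rho_\ff$, is a Lagrangian distribution on $\mr_t\times\xo$ whose wavefront set lies on the flow-out of the conormal to the diagonal under the (rescaled) geodesic flow $\exp(tH_p)$; this is where non-trapping is essential, since it guarantees the flow-out Lagrangian does not accumulate at the front face from the interior and hence reaches $L$ and $R$ cleanly. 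The substitution $t=s-s'-\log x-\log x'$ in \eqref{kerscat-0} is exactly the change of variables that, after blowing up $\p\odiag$, turns $t$ into $s-s'-2\log\rho_\ff$ modulo a smooth factor, which is why the statement is phrased with $\rho_{\ff_0}$; I would make this precise by tracking how $\log x+\log x'$ lifts to $2\log\rho_\ff$ plus a smooth function on $\xo$.

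Next I would perform the two limits $x'\to0$ and then $x\to0$ microlocally. Taking $x'\to0$ means restricting the lifted kernel to the right face $R=\{\rho_R=0\}$; the weight ${x'}^{-\novt}$ is precisely the one that makes this restriction nontrivial and finite, as in the definition of the radiation fields \eqref{radf}--\eqref{radfs-0}. Restriction of a Lagrangian distribution to a hypersurface transversal to its Lagrangian produces a Lagrangian distribution on the hypersurface whose Lagrangian is the ``intersection then projection'' of the original one—this is the standard transversal-restriction calculus for FIOs/Lagrangian distributions. Doing this on $R$ gives $\mcm$ as a Lagrangian distribution on $\mr_t\times (X\times_0\p X)$ associated to the restriction of the geodesic flow-out to the right boundary; then restricting again at $\rho_L=0$ (the $x\to0$ limit with its weight $x^{-\novt}$) lands on $\p X\times_0\p X$ and produces the two Lagrangians $\La_{\p\pm}^f$ of \eqref{bdryla}—two of them because the forward fundamental solution, when one lets both endpoints go to the boundary, contributes both the ``direct'' geodesic and, after the $\p_s$ and the symmetrization $\ha$ in \eqref{kerscat-0}, the reflected branch; equivalently the two signs correspond to the two connected components of the relevant sphere bundle, matching the $\mcr_+$ versus $\mcr_-^{-1}$ structure of $\mcs=\mcr_+\circ\mcr_-^{-1}$. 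The order $\oq$ is bookkeeping: one starts with the order of $E_+$ as a solution of the wave equation in $n+1$ space dimensions, applies $\p_s$ (raises order by $1$), and each transversal restriction to a hypersurface lowers the order by $\oq$ (losing half a codimension-one intersection in the Hörmander convention), together with the half-density normalizations; I would simply run this count.

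The main obstacle is the one flagged in the paragraph after Theorem~\ref{comp-red}: the limits in \eqref{kerscat-0} are a priori only established in the weak distributional sense of \eqref{sep-limits}, whereas the microlocal restriction calculus requires the restrictions to be taken in a sense compatible with the wavefront set—i.e.\ one needs the convergence to hold in a space like $\CmI$ with wavefront bounds, or to know that the relevant Lagrangian meets the boundary faces $L$ and $R$ transversally so that restriction is continuous on the corresponding Lagrangian distribution spaces. So the crux of the argument will be a uniform-up-to-the-boundary description of $\beta_0^*E_+$: I would prove (this is essentially the content of Theorem~\ref{radfs-th-N}, which the introduction says refines \cite{SW}) that the conjugated lifted kernel is a \emph{conormal-Lagrangian} distribution on $\mr_t\times\xo$ with polyhomogeneous conormal behaviour at $L$, $R$, $\ff$, with the Lagrangian transversal to $L$ and $R$ away from their intersection with $\ff$; granting that, the weak limits of \eqref{sep-limits} automatically coincide with the strong restrictions to $R$ and then to $L$, and the FIO structure and order in \eqref{kerscatmat1} follow. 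I would also need to check that the $\log\rho_\ff$ shift does not interfere with the polyhomogeneity at $\ff$—that is, that $\mca(s,\tilde m)$ is genuinely smooth (Lagrangian) in $\tilde m$ up to $\rho_{\ff_0}=0$—which is handled by noting that $\ff$ is pushed to infinity in the $s$ variable by the substitution $s\mapsto s-2\log\rho_{\ff_0}$, so the front face no longer obstructs.
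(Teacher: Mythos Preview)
Your approach is essentially the same as the paper's: lift $E_+$ to $\mr\times\xo$ via $\beta_{1f}$ (which incorporates the change $t\mapsto s-\log\rho_L-\log\rho_R$), establish that the lifted kernel is a Lagrangian distribution uniformly up to the faces $L$, $R$, $\ff$ with Lagrangian transversal to $L$ and $R$, and then reinterpret the iterated limit \eqref{kerscat-0} as restriction to $R$ followed by restriction to $L$. The paper carries this out via Theorem~\ref{soj} and Proposition~\ref{vfields} (the rescaled symbols $q_{L}=-\rho_L^{-1}\beta_{1f}^*Q_L$, $q_{R}=-\rho_R^{-1}\beta_{1f}^*Q_R$ have Hamilton vector fields transversal to $L$, $R$ respectively, which is what makes both the Lagrangian extension and the symbol transport equations go through to the boundary), then packages the result as Proposition~\ref{radfs-th}; the final step \eqref{lift-limit}--\eqref{scat-JS0} is exactly your ``restrict to $R$, then to $L$, with $\rho_\ff\neq 0$''.

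One concrete correction to your order bookkeeping: in H\"ormander's convention, transversal restriction to a hypersurface \emph{raises} the order by $\oq$, not lowers it (the symbol class stays the same while the dimension of the base drops by one, so $m\mapsto m+\oq$). The paper's count is $\mck_{\mcw_\pm}\in I^{-\fq}$ on $\mr\times\xo$, then $I^{-1}$ after one restriction, $I^{-\tq}$ after two; applying $\p_s$ adds $1$ to give $I^{\oq}$. With your sign you would land at $I^{-\tq}$, not $I^{\oq}$. Also, the uniform structure you need is Proposition~\ref{radfs-th}, not Theorem~\ref{radfs-th-N} (the latter is a corollary, the single restriction to $\{\rho_L=0\}$).
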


In view of Theorem \ref{scat-FIO} we shall say that
\begin{gather}
\La_\p^f=\La_{\p+}^f\cup \La_{\p-}^f \text{ is the scattering relation of the non-trapping AHM } (\intx,g). \label{def-scat-rel0}
\end{gather}

One could just as well have defined  $\La_{\p+}^f \cap \{\sigma=-1\}$ to be the scattering relation.  As we will see in Section \ref{ULM}, $\La_\p^f$ is  foliated by $\La_{\p}^f \cap \{\sigma= \text{constant} \}$ and the leaf $\La_{\p+}^f \cap \{\sigma=-1\}$ is the one associated with unit speed geodesics, which is consistent with the definition \eqref{scat-rel-rn} of the scattering relation for metric perturbations mentioned above, and in this case the variable $s$ plays the role of time. This is called the sojourn time, and is related to the Busemann function  used in differential geometry. This will be made more clear in Section \ref{ULM}.

The reader might think that \eqref{kerscatmat1} contradicts the fact that $\mcs$ is bounded on $L^2(\mr\times \p X),$ but this is explained by the following 
\begin{prop}  Let  $X$ be a $C^\infty$ manifold  of dimension $d$ and  let $\La \subset (T^*\mr\setminus 0 ) \times (T^* X \setminus 0)$  be a $C^\infty$ conic closed Lagrangian submanifold.  Let 
$A(s,y) \in I^m( \mr \times X, \La, \Omega^\ha(\mr \times X)).$ Then 
\begin{gather*}
A(s-s', y)\in I^{m-\oq}( \mr \times X \times \mr, \widetilde{\La}, \Omega^\ha(\mr \times X\times \mr)), \text{ where } \\
\widetilde{\La}=\{ (s,\sigma,y,\eta, s',\sigma'): (s-s', y,\eta)\in \La, \; \sigma=-\sigma'\}.
\end{gather*}
\end{prop}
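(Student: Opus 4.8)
The plan is to observe that $A(s-s',y)$ is the pullback $\phi^{*}A$ under the map
\[
\phi\colon \mr_{s}\times X\times\mr_{s'}\longrightarrow \mr\times X,\qquad \phi(s,y,s')=(s-s',y),
\]
which is a submersion, and then to run the standard argument that a submersion pullback sends Lagrangian distributions to Lagrangian distributions. First I would record the purely linear-algebra fact that, $\phi$ being a submersion, the transpose $d\phi^{t}$ is injective on covectors; consequently the set
\[
\phi^{*}\La=\{(s,\sigma,y,\eta,s',\sigma')\colon (s-s',\sigma,y,\eta)\in\La,\ \sigma'=-\sigma\},
\]
which is exactly the set $\widetilde{\La}$ of the statement, is disjoint from the zero section, is closed (because $\La$ is), is conic by construction, and has dimension $\dim\La+1=\dim(\mr\times X\times\mr)$; a one-line computation with the tautological one-form shows it is isotropic, hence Lagrangian. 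So $\widetilde{\La}$ is a $C^{\infty}$ closed conic Lagrangian submanifold of $(T^{*}\mr\setminus 0)\times(T^{*}X\setminus 0)\times(T^{*}\mr\setminus 0)$, and the pullback $\phi^{*}A$ makes sense microlocally precisely because $\phi$ is a submersion, so that the usual transversality hypothesis for pullbacks is automatic.

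For the quantitative statement I would use that $\La$ is closed to write $A=\sum_{j}A_{j}+R$, with $R\in C^{\infty}$ and each $A_{j}=\int e^{i\psi_{j}(s,y,\theta)}a_{j}(s,y,\theta)\,d\theta$ represented by a nondegenerate phase function $\psi_{j}$, homogeneous of degree one in $\theta\in\mr^{N_{j}}$, with $a_{j}$ a symbol of order $\mu_{j}$. Then
\[
A_{j}(s-s',y)=\int e^{i\tilde\psi_{j}(s,y,s',\theta)}\,a_{j}(s-s',y,\theta)\,d\theta,\qquad \tilde\psi_{j}(s,y,s',\theta):=\psi_{j}(s-s',y,\theta),
\]
and I would check that $\tilde\psi_{j}$ is again a nondegenerate phase function on $\mr\times X\times\mr$ with the same $N_{j}$ fibre variables: this is immediate, since $\tilde\psi_{j}$ is $\psi_{j}$ precomposed with the submersion $(s,y,s',\theta)\mapsto(s-s',y,\theta)$, which leaves the fibre variables $\theta$ untouched, so the linear independence of the differentials $d(\p_{\theta_{k}}\psi_{j})$ is inherited. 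On the critical set $\{d_{\theta}\tilde\psi_{j}=0\}$ one computes $d_{s}\tilde\psi_{j}=(\p_{s}\psi_{j})(s-s',y,\theta)=:\sigma$, $d_{y}\tilde\psi_{j}=(\p_{y}\psi_{j})(s-s',y,\theta)=:\eta$ and $d_{s'}\tilde\psi_{j}=-(\p_{s}\psi_{j})(s-s',y,\theta)=-\sigma$, so $\tilde\psi_{j}$ parametrizes exactly the part of $\widetilde{\La}$ over the cone where $\psi_{j}$ parametrizes $\La$. For the order I would simply note that replacing the $(1+d)$-dimensional base by the $(2+d)$-dimensional one, with $\mu_{j}$ and $N_{j}$ unchanged, lowers the order of the oscillatory integral by $\tfrac14\bigl((2+d)-(1+d)\bigr)=\oq$, so $A_{j}(s-s',y)\in I^{m-\oq}(\mr\times X\times\mr,\widetilde{\La})$. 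The half-density bookkeeping is trivial: the diffeomorphism $T(s,y,s')=(s-s',y,s')$ has Jacobian $1$ and $\phi=\pi\circ T$ with $\pi(u,y,s')=(u,y)$, so $A(s-s',y)\,|ds\,dy\,ds'|^{1/2}=T^{*}\bigl(A(u,y)\,|du\,dy|^{1/2}\otimes|ds'|^{1/2}\bigr)$ with no extra factor. Summing over $j$ and absorbing $R$ (a $C^{\infty}$, hence negligible, term) yields $A(s-s',y)\in I^{m-\oq}(\mr\times X\times\mr,\widetilde{\La},\Omega^{\ha}(\mr\times X\times\mr))$. Alternatively one may just quote the general pullback theorem for Lagrangian distributions under a submersion $\phi\colon Y\to Z$, which gives $\phi^{*}\colon I^{m}(Z,\La,\Omega^{\ha})\to I^{m+(\dim Z-\dim Y)/4}(Y,\phi^{*}\La,\Omega^{\ha})$, and here $\dim Z-\dim Y=-1$.

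I do not expect a genuine obstacle here: the only steps requiring (minor) care are the nondegeneracy of $\tilde\psi_{j}$ and the verification that $\widetilde{\La}$ is a closed conic Lagrangian, and both follow at once from $\phi$ being a submersion. The one substantive content of the statement is that this $-\oq$ shift is the mechanism reconciling Theorem~\ref{scat-FIO} with the $L^{2}$-boundedness of $\mcs$: passing from the convolution profile $A(s,y)$ on $\mr\times X$ to the genuine Schwartz kernel $A(s-s',y)$ on $(\mr\times X)\times\mr$ \emph{lowers} the order by $\tfrac14$, so the $I^{\oq}$ profile of Theorem~\ref{scat-FIO} corresponds to a Schwartz kernel of order $0$, in harmony with $L^{2}$ boundedness.
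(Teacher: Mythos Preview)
Your proof is correct and follows essentially the same route as the paper's: both arguments pass to a local oscillatory integral representation, replace the phase $\Phi(s,y,\theta)$ by $\Phi(s-s',y,\theta)$, check that nondegeneracy persists and that the new phase parametrizes $\widetilde{\La}$, and read off the $-\tfrac14$ order shift from the dimension count $(d+1)\to(d+2)$ with the number of $\theta$-variables unchanged. Your additional framing as a submersion pullback (and the remark that one could simply quote the general $\phi^{*}\colon I^{m}\to I^{m+(\dim Z-\dim Y)/4}$ theorem) is a clean conceptual wrapper around the same computation.
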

\begin{proof}  Using the notation from  \cite{Hormander}, $A(s,y)$ is  microlocally given by an oscillatory integral
\begin{gather*}
A(s,y)= \int_{\mr^N} e^{i\Phi(s,y,\theta)} a(s,y,\theta) \; d\theta, \;\ a\in S^{m+\frac{(d+1-2N)}{4}}(\mr \times \mr^{d}\times \mr^N).
\end{gather*}
where  $\Phi(s,y,\theta)$ locally parametrizes $\La$ in the sense that
\begin{gather*}
\La= \{ (s,y, d_s \Phi, d_y \Phi): \; d_\theta \Phi(s,y,\theta)=0\}, \text{ and } \\
d_{s,y,\theta}(d_\theta\Phi(s,y,\theta)) \text{ are linearly independent when } d_\theta \Phi(s,y,\theta)=0.
\end{gather*}
If we define $\Psi(s,y,s',\theta)=\Phi(s-s',y,\theta),$ then
\begin{gather*}
d_{s,y,s',\theta}(d_\theta\Psi(s,y,s',\theta)) \text{ are linearly independent when } d_\theta \Psi(s,s',y,\theta)=0, \\
\text{ and } \\
\{(s,s',y, d_s \Psi, d_{s'}\Psi, d_y\Psi: d_\theta\Psi=0\}=\{(s-s',y,\eta)\in \La, \sigma=-\sigma'\}=\wtla.
\end{gather*}
Therefore $\wtla$ is a Lagrangian submanifold, $\Psi(s,s',y,\theta)$ parametrizes $\wtla$ and 
\begin{gather*}
A(s-s',y)= \int_{\mr^N} e^{i\Psi(s,y,s',\theta)} a(s-s',y,\theta) \; d\theta, \text{ where } \\
a(s-s',y,\theta) \in S^{m-\oq +\frac{(d+2-2N)}{4}}(\mr\times \mr^d \times \mr \times \mr^N).
\end{gather*}
So we conclude that $A(s-s', y)\in I^{m-\oq}( \mr \times X \times \mr, \widetilde{\La}, \Omega^\ha(\mr \times X\times \mr)).$
\end{proof}
Therefore, except for the singular term $-2\log \rho_{\ff_0},$ $\mcs$ is a  Fourier integral operator (FIO) of oder zero.

\section{ The scattering operator and the wave group}

   The goal of this section is to prove Theorem \ref{comp-red}, and we will rely on spectral methods based on the work of  Guillop\'e \cite{Guillope},  Joshi and S\'a Barreto \cite{JS1}, Mazzeo and Melrose \cite{MM} and S\'a Barreto \cite{sbhrf}.  
   
   The Laplace operator $\Delta_g$ is a self adjoint unbounded operator on $L^2(X)$ and its domain is given by $H_0^2(X),$ where  for $m \in \mn,$
   \begin{gather}
   \begin{gathered}
   H_0^m(X)= \{ \psi\in L^2(X): \mcv_1 \mcv_2 \ldots \mcv_k \psi \in L^2(X), \; k\leq m \\
\text{ where } \mcv_j \text{ is a vector field vanishing at } \p X\}. 
\end{gathered}\label{sobolevsp}
\end{gather}

In local coordinates $(x,y)$ for which \eqref{prod} holds a vector fields vanishes at $\p X$ if and only if it is locally given by
$\mcv = a(x,y) x \p_x + \sum_{j=1}^n a_j(x,y) x\p_{y_j},$ where $a, a_j \in C^\infty.$

According to \cite{MM}, the spectrum of $\Delta_g$  is given by
\begin{gather}
\begin{gathered}
\text{ a finite point spectrum } \sigma_{pp}\subset (0, \nsq) \text{ and the continuous spectrum } \sigma_{ac}=[\nsq, \infty).
 \end{gathered}\label{spectrum}
 \end{gather}

 Then according to the spectral theorem, the resolvent
 \begin{gather}
 \begin{gathered}
R(\la)= \left(\Delta_g-\nsq-\la^2\right)^{-1}: L^2(X) \longrightarrow H_0^2(X) \\
 \text{ is a bounded  operator which is meromorphic in } \la \text{ for }  |\im \la| >0  \\
 \text{with a finite number of  poles given by } \pm i |\mu| \text{ where } -\mu^2 \text{ is an eigenvalue of } \Delta_g-\nsq \\
 \text{ and the multiplicity of the pole is equal to the multiplicity of the eigenvalue.}
\end{gathered}\label{resolv0}
\end{gather}
Let us first take the part of $R(\la)$  which is holomorphic in $\im \la<<0,$ which we shall denote by $R_+(\la).$  Mazzeo and Melrose showed that $R_+(\la)$ continues meromorphically to $\mc\setminus \{i(k+\ha), \; k\in \mn\},$  as a family of operators
\begin{gather}
R_+(\la): C_0^\infty( \intx) \longrightarrow C^\infty(\intx), \label{domain-res}
\end{gather}
and Guillarmou showed that generically the points $i(k+\ha)$ are  essential singularities of  $R_+(\la)$ unless the family of metrics on $\p X$ denoted by $h(x)$ in \eqref{prod} is a function of $x^2.$ We shall call $R_+(\la)$ the forward resolvent.

Similarly, if we say that $R_-(\la)$ is the part of $R(\la)$ which is holomorphic in $\im \la>>0,$ the result  of Mazzeo and Melrose  guarantees that $R_-(\la)$  continues meromorphically to $\mc\setminus\{-i(k+\ha), \; k\in \mn\}$ as a family of operators satisfying \eqref{domain-res}.  We shall call $R_-(\la)$ the backward resolvent.

Let $u(t,z)$ satisfy \eqref{CPWE} with initial data $(\vphi,\psi)\in E_{ac}(X),$ $\vphi,\psi\in C_0^\infty(\intx),$ then $u_+(t,z)=\one_+(t)u (t,z)$ and $u_-(t,z)=\one_-(t)u(t,z)$ satisfy
\begin{gather} 
\begin{gathered}  
(D_t^2 -\Delta_g+\nsq) u_+(t,z)= -(\psi \delta(t)+ \vphi \delta'(t)), \\
(D_t^2 -\Delta_g+\nsq) u_-(t,z)= (\psi \delta(t)+ \vphi \delta'(t)),
\end{gathered}\label{cut-off1}
\end{gather}
 If $E_+$ and $E_-$ are the forward and backward fundamental solutions of the wave operator, then
\begin{gather}
\begin{gathered}
u_+(t,z)= - \int_{X} \left( E_+(t,z,z')\psi(z')+ \p_t E_+(t,z,z') \vphi(z')\right) \dvol_{g(z')}, \\
u_-(t,z)= \int_{X} \left( E_-(t,z,z')\psi(z')+ \p_t E_-(t,z,z') \vphi(z')\right) \dvol_{g(z')}.
\end{gathered} \label{sol-CP}
\end{gather}

Since $u(t,z)$ has finite energy, $u_+(t,z)$ is tempered in $t,$ and hence
\begin{gather*}
\left(\Delta_g-\nsq-\la^2\right) \widehat{u_+}(\la,z)= \psi(z)+i\la \vphi(z)
\end{gather*}
where $\widehat{u_+}(\la,z)$ is the Fourier transform in $t$ of $u_+(t,z).$ And since $u_+(t,z)=0$ for $t<0,$ $\widehat{u_+}(\la,z)$ is holomorphic in $\la$ for $\im\la<0.$  It follows that
\begin{gather*}
\widehat{u_+}(\la,z)= R_+(\la)( \psi(z)+i\la \vphi(z)), \;\ \im\la<0, \;\ \la^2\not\in \{ -\mu^2\in  \sigma_{pp}(\Delta_g-\nsq)\}.
\end{gather*}

We conclude from \eqref{sol-CP}  that if $R_+(\la,z,z')$ denotes the Schwartz kernel of $R_+(\la),$ then
\begin{gather}
R_+(\la,z,z')= \widehat{E_+}(\la,z,z'), \; \im \la<0. \label{res-fw}
\end{gather}

One can do the analogue construction for the backward fundamental solution, and the backward resolvent. Namely, we take the Fourier transform in $t$ of  $u_-(t,z),$ and since $u_-(t,z)=0$ in $t>0,$ $\widehat{u_-}(\la,z)$ is holomorphic in $\im \la>0,$  we obtain
\begin{gather*}
\left(\Delta_g-\nsq-\la^2\right) \widehat{u_-}(\la,z)= -(\psi(z)+i\la\vphi(z)),
\end{gather*}
and therefore 
\begin{gather*}
\widehat{u_-}(\la,z)=-R_-(\la) (\psi(z)+i\la \vphi(z)), \;\ \im \la>0, \;\ \la^2\not\in \{ -\mu^2\in  \sigma_{pp}(\Delta_g-\nsq)\}.
\end{gather*}
  If $E_-(t,z,z')$ is Schwartz  kernel of the backward fundamental solution of the wave equation defined in \eqref{CPW} and if $R_-(\la,z,z')$ denotes the Schwartz kernel of $R_-(\la),$ then
\begin{gather}
R_-(\la,z,z')= \widehat{E_-}(\la,z,z'), \; \im \la>0. \label{res-bw}
\end{gather}

We set $s_+=t+\log x,$  $s_-=t-\log x,$ and define
\begin{gather}
\begin{gathered}   
\mce_+(s_+,y,z') =  x^{-\novt} E_+(s_+-\log x,x,y,z')|_{\{x=0\}}, \\
\mce_-(s_-,y,z')= x^{-\novt} E_-(s_-+\log x,x,y,z')|_{\{x=0\}}.
\end{gathered}\label{defepm}
\end{gather}
So we deduce from \eqref{sol-CP} and \eqref{radf1}  that
\begin{gather}
\begin{gathered}
\mcr_+(\vphi,\psi)(s_+,y)=- \p_s  \int_X \left( \mce_+(s_+,y,z')\psi(z')+ \ps \mce_+(s_+,y,z')\vphi(z')\right) \dvol_{g(z')},\\
\mcr_-(\vphi,\psi)(s_-,y)=  \ps  \int_X \left( \mce_-(s_-,y,z')\ \psi(z') +\ps \mce_-(s_-,y,z')\vphi(z')\right) \dvol_{g(z')}.
\end{gathered}\label{newker1}
\end{gather}
If $u(t,z)$ satisfies \eqref{CPWE} with initial data $(\vphi,\psi),$ and  if we set  $V_+(x,s_+,y)=x^{-\novt} u_+(s_+-\log x,z),$  $V_-(x,s_-,y)= x^{-\novt} u_-(s_-+\log x,z),$  and take Fourier transform in $s_\pm,$ then for $x>0$ we obtain
\begin{gather*}
\widehat{V_+}(x,\la,y)= x^{-\novt-i\la}\widehat{u_+}(\la,z)=  x^{-\novt-i\la} R_+(\la)(\psi+i\la\vphi), \;\ x>0, \;\  \im \la<<0, \\
\widehat{V_-}(x,\la,y)= - x^{-\novt+i\la} \widehat{u_-}(\la,z)= x^{-\novt+i\la}R_-(\la)(\psi+i\la\vphi), \;\ x>0, \;\  \im \la>>0.
\end{gather*}

It turns out that the restriction of the Schwartz kernels
\begin{gather}
\begin{gathered}
\left.x^{-\novt-i\la} R_+(\la,x,y,z')\right|_{\{x=0\}}= \mcp_+(\la, y,z'),\;\ z' \in \intx, \;\ \im \la<<0,\\
\left.x^{-\novt+i\la} R_-(\la,x,y,z')\right|_{\{x=0\}}= \mcp_-(\la, y,z'),  \;\ z' \in \intx, \;\  \im \la>>0,
\end{gathered} \label{eisens}
\end{gather}
are  well defined, and are the Schwartz kernels of the adjoint of the forward and backward Poisson operators studied in \cite{GrZw,JS1,M}.  In fact, $\mcp_\pm(\la, y,z')$ also meromorphically continue from  the set $\{\la \in \mc: \pm \im \la <0\}$ to $\mc\setminus \{\pm i(k+\ha), \; k\in \mn\}.$   However, even though  $\widehat{V_\pm}(x,\la,y)|_{\{x=0\}}$ are well defined, it is not clear they are equal to the Fourier transform of the radiation fields $\mcr_\pm(s,y,z')$ in the variable $s,$ and we need the following 
\begin{lemma} \label{conv-FT}  Let $u(t,z)$ be the solution of \eqref{CPWE} with initial data $(\vphi,\psi)\in E_{ac}(X),$ $\vphi, \psi \in C_0^\infty(\intx).$ Let  $V_\pm(x,s,y)$ be defined as above, then for every $f(y) \in C^\infty(\p X),$  and for $\pm \im \la< 0,$
\begin{gather}
\lim_{x\rightarrow 0} \int_\mr \int_{ \p X} e^{-i\la s} \p_s V_\pm(x,s,y) f(y)  \dvol_{h(x)} ds = \int_\mr \int_{ \p X} e^{-i\la s} \p_s V_\pm(0,s,y)f(y) \dvol_{h_0} ds. \label{FTRF}
\end{gather}
\end{lemma}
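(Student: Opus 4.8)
\emph{Proof strategy (proposal).} The plan is to compute the inner Fourier integral in $s$ explicitly for $x>0$, identify it with a rescaled restriction of the forward resolvent, and then pass to the limit $x\to 0$ using two estimates. I would treat $V_+$ only; the statement for $V_-$ follows by replacing $\la$ by $-\la$ and \eqref{res-fw} by \eqref{res-bw}.

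Fix $x>0$. Changing variables $t=s-\log x$ and integrating by parts in $s$ gives, for each $y\in\p X$,
\begin{gather*}
\int_\mr e^{-i\la s}\p_s V_+(x,s,y)\,ds = i\la\,x^{-\novt-i\la}\,\widehat{u_+}(\la,x,y)= i\la\,x^{-\novt-i\la}\bigl(R_+(\la)(\psi+i\la\vphi)\bigr)(x,y),
\end{gather*}
where we used $\widehat{u_+}(\la,\cdot)=R_+(\la)(\psi+i\la\vphi)$ for $\im\la<0$ (the finitely many poles of $R_+$ in this region are not met since $(\vphi,\psi)\in E_{ac}(X)$); the boundary terms in the integration by parts vanish because $V_+$ is supported in $\{s\ge-C_0\}$ uniformly in $x$ (see below) and $e^{-i\la s}$ decays as $s\to+\infty$. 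By Fubini (the integrand is absolutely integrable for fixed $x>0$) the left side of \eqref{FTRF} equals $i\la\int_{\p X}x^{-\novt-i\la}(R_+(\la)(\psi+i\la\vphi))(x,y)\,f(y)\,\dvol_{h(x)}$. Next I would invoke the Mazzeo--Melrose structure of $R_+(\la)$: since $\psi+i\la\vphi$ is supported in $\intx$, only the left face of $\xo$ is relevant, so $R_+(\la)(\psi+i\la\vphi)=x^{\novt+i\la}G(\cdot,\la)$ with $G(\cdot,\la)$ polyhomogeneous, hence continuous up to $\{x=0\}$ with restriction in $C^\infty(\p X)$. Consequently $x^{-\novt-i\la}(R_+(\la)(\psi+i\la\vphi))(x,y)\to G(0,y,\la)$ uniformly on $\p X$, and since $\dvol_{h(x)}\to\dvol_{h_0}$ smoothly on the compact manifold $\p X$, the left side of \eqref{FTRF} converges as $x\to0$ to $i\la\int_{\p X}G(0,y,\la)\,f(y)\,\dvol_{h_0}$. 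It remains to show that this limit is the right side of \eqref{FTRF}, that is, that the left side of \eqref{FTRF} is continuous at $x=0$.

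For that I would argue directly by dominated convergence in the $(s,y)$-integral. By \eqref{radf}, $V_+\in C^\infty([0,\eps)_x\times\mr_s\times\p X)$, so $\p_s V_+(x,s,y)\to\p_s V_+(0,s,y)$ for every $(s,y)$, and it suffices to dominate $e^{-i\la s}\p_s V_+(x,s,y)f(y)$ by a fixed integrable function of $(s,y)$, uniformly in $x\in(0,\eps)$. Two ingredients are needed. First, finite propagation speed: if $\vphi,\psi$ are supported in a fixed compact $K\subset\intx$, then $d_g((x,y),K)\ge-\log x-C_0$ for small $x$ (the $g$-distance from $\{x=x_0\}$ to $\{x=x\}$ is at least $\log(x_0/x)$), so $u(t,x,y)=0$ for $|t|<-\log x-C_0$; since $V_+$ is built from $u_+=\one_+(t)\,u$ with $t=s-\log x$, this yields $\p_s V_+(x,s,y)=0$ for $s\le-C_0$, uniformly in $x$. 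This disposes of $s\to-\infty$ and makes $e^{-i\la s}$ bounded and square integrable in $s$ on $\{s\ge-C_0\}$ (here $\im\la<0$ is used). Second, a uniform energy bound $\sup_{0<x<\eps}\|\p_s V_+(x,\cdot,\cdot)\|_{L^2(\mr\times\p X,\,ds\,\dvol_{h(x)})}<\infty$: this is the boundary energy flux through the timelike hypersurfaces $\{x=\mathrm{const}\}$ that underlies the fact, proved in \cite{sbhrf}, that $\mcr_+$ is an isometry from $E_{ac}(X)$ onto $L^2(\mr\times\p X)$ (at $x=0$ this flux equals the energy of $(\vphi,\psi)$, and for $x>0$ it is dominated by it). Granting these, $\p_s V_+(x,\cdot,\cdot)$ is a bounded family in $L^2(\mr\times\p X)$ converging pointwise---and, by \eqref{radf}, locally uniformly on compact sets---to $\p_s V_+(0,\cdot,\cdot)$; hence it converges weakly in $L^2$ to $\p_s V_+(0,\cdot,\cdot)$, and pairing with the fixed $L^2$ function $e^{-i\la s}f(y)\one_{\{s\ge-C_0\}}$ (carrying the uniformly convergent density $\dvol_{h(x)}/\dvol_{h_0}$) yields \eqref{FTRF}.

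The hard part is the uniform energy bound. Energy conservation together with Sobolev embedding only controls $\|\p_s V_+(x,\cdot,\cdot)\|_{L^2}$ up to a factor growing in $-\log x$, because on the support of $V_+$ the time $t=s-\log x$ tends to $+\infty$ as $x\to0$ and the (at worst polynomial) growth of $u(t,\cdot)$ in $t$ is not absorbed by the $x^{-\novt}$ rescaling; only the sharp flux identity behind the unitarity of $\mcr_+$ in \cite{sbhrf} produces a bound independent of $x$. The remaining steps---the integration by parts, the identification with $R_+(\la)$, the left-face expansion of the resolvent kernel, and the weak-convergence packaging---are routine.
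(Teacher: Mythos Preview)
Your proposal is correct in outline but takes a different route from the paper, and your first paragraph (identifying the $x>0$ integral with the resolvent and invoking the Mazzeo--Melrose expansion) is superfluous: once the weak-convergence argument of your second paragraph goes through, it proves the lemma outright, so the resolvent detour adds nothing.

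The substantive difference is in the uniform bound. You aim for $\sup_{0<x<\eps}\|\p_s V_+(x,\cdot,\cdot)\|_{L^2(\mr_s\times\p X)}<\infty$, an $L^2_{s,y}$ estimate that you attribute to a flux-monotonicity argument behind the unitarity of $\mcr_+$ in \cite{sbhrf}; pairing a bounded, pointwise-convergent family in $L^2$ with the fixed $L^2$ function $e^{-i\la s}f(y)\one_{\{s\ge -C_0\}}$ then finishes. The paper instead proves the \emph{pointwise-in-$s$} bound $\int_{\p X}|\p_s V_+(x,s,y)|^2\,\dvol_{h_0}\le C$, uniformly in $(x,s)\in(0,\eps]\times\mr$, by a self-contained Hardy/trace computation: commuting $\Delta_g$ through the equation gives $\|\p_t u(t)\|_{H_0^1(X)}\le C$ for all $t$; writing $u=x^{\novt}w$ and expanding $\int_0^\mu\int_{\p X}|x\p_x\p_t u|^2\,\dvol_{h_0}\,dx/x^{n+1}$ produces a nonnegative bulk term plus the trace $\frac n2\int_{\p X}(\p_t w)^2(t,\mu,y)\,\dvol_{h_0}$, which is therefore bounded; restricting to $t+\log\mu=s$ and applying Cauchy--Schwarz on the compact $\p X$ yields an $L^1_y$ bound uniform in $(x,s)$, after which ordinary dominated convergence in $s$ (using $\im\la<0$ and the support condition $s\ge s_0$) concludes. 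Your route is conceptually cleaner, but be aware that \cite{sbhrf} establishes the flux identity at $x=0$; the monotonicity for $x>0$ that you invoke is plausible but is not stated there and would have to be supplied. The paper's trace argument sidesteps this by trading your $L^2_s$ estimate for an $L^\infty_s$ one obtained directly from interior energy control.
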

\begin{proof}   We prove the result in the case of $V_+.$  As usual, multiplying \eqref{CPWE} by $\p_t u$ and integrating by parts, we find that the wave equation has a conserved energy given by
\begin{gather}
\begin{gathered}
E(u(t), \p_t u(t))= || \p_t u(t)||_{L^2(X)}^2 + ||\nabla_g u(t) ||_{L^2(X)}^2- \nsq ||u(t)||_{L^2(X)}^2= \\  || \psi ||_{L^2(X)}^2 + ||\nabla_g \vphi ||_{L^2(X)}^2- \nsq ||\vphi ||_{L^2(X)}^2,
\end{gathered}\label{Est-En}
\end{gather}
provided the initial data is orthogonal to the eigenfunctions of $\Delta_g-\nsq.$ Otherwise, one would have exponentially growing solutions of the form 
$u(t,z)= e^{\pm \mu_j t} \psi_j(z),$ where
$(\Delta_g-\nsq+\mu_j^2)\psi_j=0.$ Moreover,  this energy is positive if and only if $\vphi$ is orthogonal to the eigenfunctions of $\Delta_g.$

We know  from \eqref{resolv0} that for $C>0$ large, 
\begin{gather}
\begin{gathered}
||\psi ||_{H_0^2(X)} \leq  ||\Delta_g \psi||_{L^2(X)}+ C ||\psi||_{L^2(X)}, \;\ \psi\in C_0^\infty(X).
\end{gathered}\label{h2bd}
\end{gather}

Throughout the proof we will use $C$ to denote a constant which may change from line to line. Since $\Delta_g$ commutes with the wave operator, if $u(t,z)$ satisfies \eqref{CPWE} with initial data $(\vphi,\psi)\in E_{ac}(X),$ $\vphi, \psi \in C_0^\infty(\intx),$ we deduce from  \eqref{Est-En} and \eqref{h2bd} that
\begin{gather*}
||\p_t u(t,z)||_{H^1_0(X)}^2 \leq ||\p_t u(t,z)||_{H^2_0(X)}^2\leq C( ||  \Delta_g  \p_t u(t,z)||_{L^2(X)}^2+ ||\p_t u||_{L^2(X)}^2) \leq \\ C (E(\Delta_g\vphi, \Delta_g \psi)+ E(\vphi, \psi)).
\end{gather*}
In particular, if $U\sim [0,\eps) \times \p X$ is a collar neighborhood of $\p X$ where \eqref{prod} holds and  for any $\mu\leq \eps,$
\begin{gather*}
\int_0^\mu \int_{\p X} |x \p_x \p_t u(t,x,y)|^2  \dvol_{h_0} \frac{dx}{x^{n+1}} \leq C \int_0^\eps \int_{\p X} |x \p_x \p_t u(t,x,y)|^2 \dvol_{h(x)} \frac{dx}{x^{n+1}}\leq \\
||\p_t u||_{H_0^1(X)}^2\leq C (E(\Delta_g\vphi, \Delta_g \psi) + E(\vphi,  \psi)).
\end{gather*}
If we set $u(t,z)= x^{\novt} w(t,z),$ then $x\p_x \p_t u=x^{\novt}( x\p_x \p_t w+ \novt \p_t w),$ and since for each $t>0, $ $w(t,x,y) \in C_0^\infty(\intx),$
\begin{gather*}
\int_0^\mu \int_{\p X} |x \p_x \p_t u(t,x,y)|^2 \dvol_{h_0} \frac{dx}{x^{n+1}}= \\ \int_0^\mu \int_{\p X}\left( (x \p_x \p_t w(t,x,y))^2+ \nsq (\p_t w(t,x,y))^2 + \novt x \p_x( \p_t w)^2(t,x,y)) \right) \dvol_{h_0} \frac{dx}{x} =\\
 \int_0^\mu \int_{\p X}\left( (x \p_x \p_t w(t,x,y))^2+ \nsq (\p_t w(t,x,y))^2 \right) \dvol_{h_0}\frac{dx}{x} +\novt \int_{\p X} (\p_t w)^2(t,\mu,y) \dvol_{h_0}.
 \end{gather*}
 In particular this shows that
 \begin{gather*}
 \int_{\p X} (\p_t w)^2(t,\mu,y) \dvol_{h_0} \leq C (E(\Delta_g\vphi, \Delta_g \psi)+ E(\vphi, \psi)), \;\  t>0, \;\  0<\mu \leq \eps.
 \end{gather*}
 In particular, if we restrict this inequality to the curve $t+\log \mu=s,$ $\mu \leq \eps,$ and  since by definition  $V_+(\mu ,s,y)= w(s-\log \mu, \mu, y),$  we have
 \begin{gather*}
 \int_{\p X}  |\p_s V_+(\mu ,s,y)|^2  \dvol_{h_0} \leq C( E(\Delta_g\vphi, \Delta_g \psi)+ E(\vphi, \psi))) , \;\ 0<\mu \leq \eps.
 \end{gather*}
Now we  use  the Cauchy-Schwartz inequality and the fact that $\p X$ is compact to conclude that
\begin{gather*}
 \int_{\p X}  |\p_s V_+(\mu,s,y)|  \dvol_{h(\mu)} \leq  C \int_{\p X}  |\p_s V_+(\mu,s,y)|  \dvol_{h_0} \leq \\ 
 C \left( \int_{\p X}  |\p_s V_+(\mu,s,y)|^2  \dvol_{h_0}\right)^\ha \leq C (E(\Delta_g\vphi, \Delta_g \psi)^\ha+ E(\vphi, \psi)^\ha)), \;\ 0< \mu \leq \eps.
 \end{gather*}
 We know that by finite speed of propagation there exists $s_0\in \mr$ so that $V_+(\mu,s,y)=0$ for $s\leq s_0,$ therefore \eqref{FTRF}  follows by the dominated convergence theorem and the fact that $\im\la<0.$
\end{proof}

It follows from Lemma \ref{conv-FT}  that if $u(t,z)$ satisfies \eqref{CPWE} with initial data $(\vphi,\psi),$ then

\begin{gather*}
\left.\widehat{\p_s V_+}(x,\la,y)\right|_{\{x=0\}}= \widehat{\mcr_+(\vphi,\psi)}(\la,y)= i\la \mcp_+(\la)( \psi+ i\la \vphi) , \; \im \la <0, \\
\left.\widehat{\p_s V_-}(x,\la,y)\right|_{\{x=0\}}=- \widehat{\mcr_-(\vphi,\psi)}(\la,y)=- i\la \mcp_-(\la)( \psi+ i\la \vphi), \; \im \la >0. 
\end{gather*}
But since $\p_s V_\pm(0,s,y) \in L^2(\mr \times \ \p X, ds \dvol_{h_0}),$  it follows from \eqref{FTRF} and the fact that $\mcp_\pm$ continues meromorphically to $\im \la=0,$ that
\begin{gather}
\left.\widehat{\p_s V_\pm }(x,\la,y)\right|_{\{x=0\}}= \pm \widehat{\mcr_\pm (\vphi,\psi)}(\la,y)= \pm i \la \mcp_\pm (\la)( \psi+i\la\vphi), \;  \la\in \mr. \label{FTRF1}
\end{gather}
Since  $\la \widehat{V_\pm}(0,\la,y) \in L^2(\mr \times \ \p X, d\la \dvol_{h_0}),$ this also shows that $\mcp_\pm$ has at most a simple pole at $\la=0.$

 It was shown in \cite{sbhrf}, using arguments of the proof of Proposition 2.2 of \cite{Guillope}, that the maps
\begin{gather*}
\begin{gathered}
\mcp_\pm(\la) : C_0^\infty(\intx) \longrightarrow C^\infty(\mr_+ \times \p X) \\
\psi \longmapsto \sqrt{\frac{2}{\pi}} \int_X \mcp_\pm (\la,y,z') \psi(z') \dvol_{g(z')}, \; \la>0,
\end{gathered}
\end{gather*}
extend to  maps
\begin{gather}
\mcp_\pm (\la) : \Lac \longrightarrow L^2(\mr_+\times \p X; \la^2  d\la \dvol_{h_0}), \label{mapprop}
\end{gather}
and if $f:[0,\infty) \longrightarrow \mc,$
\begin{gather}
\mcp_\pm(\la) f\left(\Delta_g-\nsq\right) \psi= f(\la) \mcp_\pm(\la) \psi, \;\ \psi\in \Lac.  \label{spec-res0}
\end{gather}

In particular,  if we denote $A=\sqrt{\Delta_g-\nsq},$ we have
\begin{gather}
\begin{gathered}
\la \mcp_\pm(\la) \psi= \mcp_\pm A \psi, \;\ \la>0, \; \;\  \psi\in \Lac, \\
\la \mcp_\pm(\la) \psi=- \mcp_\pm A \psi, \;\ \la<0, \; \psi\in \Lac.
\end{gathered}\label{absvP}
\end{gather}
It follows from \eqref{newker1} that
\begin{gather}
\begin{gathered}
\mcp_+(\la,y,z')= \widehat{\mce_+}(\la,y,z'), \\
\mcp_-(\la,y,z')= -\widehat{\mce_-}(\la,y,z').
\end{gathered} \label{Poi-radf}
\end{gather}

For $\la>0,$ the scattering matrix is  defined to be the operator 
\begin{gather}
\begin{gathered}
\mca(\la): L^2(\mr_+ \times \p X, \la^2 d\la \dvol_{h_0}) \longrightarrow L^2(\mr_+ \times \p X, \la^2 d\la \dvol_{h_0})\\
\mca(\la) \circ \mcp_-(\la) = \mcp_+(\la).
\end{gathered}\label{defscatmat}
\end{gather}
Since $\sqrt{\frac{2}{\pi}}\mcp_\pm$ are surjective isometries, 
\begin{gather*}
\mca(\la)= \mcp_+(\la) \circ \mcp_-(\la)^{-1}= \mcp_+(\la) \circ \mcp_-(\la)^*, \;\ \la>0.
\end{gather*}

If we switch $\la$ to $-\la,$ we have
\begin{gather*}
\mcp_+(-\la)=\overline{ \mcp_+(\la)}=\mcp_-(\la) \text{ and } \mcp_-(-\la)=\overline{ \mcp_-(\la)}= \mcp_+(\la),
\end{gather*}
and so
\begin{gather*}
\mca(-\la)= \mca(\la)^{-1}, \; \la>0,
\end{gather*}
and we still have
\begin{gather*}
\mca(-\la)\circ \mcp_-(-\la)= \mcp_+(-\la), \;\ \la>0,
\end{gather*}
and therefore
\begin{gather}
\begin{gathered}
\mca(\la) \mcp_-(\la)= \mcp_+(\la), \;\ \la>0 \text{ and  }  \mca(\la) \mcp_-(\la)= \mcp_+(\la), \;\ \la<0, \\
\text{ and in particular } \\
\mca(\la) = (\one_+(\la) \mcp_+(\la))\circ(\one_+(\la) \mcp_-(\la))^{-1}+ (\one_-(\la) \mcp_+(\la))\circ(\one_-(\la) \mcp_-(\la))^{-1}
\end{gathered}\label{scatmat0902}.
\end{gather}

 In view of \eqref{sol-CP},  the scattering operator is the map
\begin{gather}
\mcs F(s,y) = -\mcf^{-1} \mca(\la) \mcf F, \label{def-scat-mat0}
\end{gather}
where $\mcf$ denotes the Fourier transform in $s.$  But  since $\sqrt{\frac{2}{\pi}}\one_\pm(\la)\la\mcp_\pm(\la)$ is a unitary operator, before we translate the right hand side of this equation into the $s$ variable, we need to consider \eqref{mapprop}, and rewrite
\begin{gather}
\mca(\la)= (\one_+(\la) \la \mcp_+(\la))\circ(\one_+(\la)  \la\mcp_-(\la))^{*}+ (\one_-(\la) \la \mcp_+(\la))\circ(\one_-(\la) \la \mcp_-(\la))^{*}, \label{defn0904}
\end{gather}
 and therefore \eqref{Poi-radf} implies that 
\begin{gather}
\begin{gathered}
\mcs=- (\one_+(D_s) \p_s \mce_+)\circ(\one_+(D_s) \p_s \mce_-)^{*}-  (\one_-(D_s) \p_s \mce_+)\circ(\one_-(D_s) \p_s \mce_-)^{*},
\end{gathered}\label{split-scat}
\end{gather}
where $\one_\pm(D_s)$ is the operator defined by
\begin{gather*}
\one_\pm(D_s) f(s)=\frac{1}{2\pi}\int_{\mr} e^{i\sigma s} \one_\pm(\sigma) \widehat{f}(\sigma) \; d\sigma.
\end{gather*}

So we only need to analyze each one of the  terms  of \eqref{split-scat} separately. Perhaps it is worth explaining this fact  in terms of propagation of singularities.  The characteristic variety of the wave operator splits in two connected components corresponding to the sign of $\tau=\sigma,$ which are the variables dual to $t$ and $s$ respectively.  Since the coefficients of the wave operator do not depend on $t,$ $\tau$ remains constant along the bicharacteristics, and in particular the sign of $\tau$ remains constant.  Therefore equation \eqref{split-scat} just says that the scattering operator splits into two parts corresponding to the sign of $\tau.$  This will be discussed more thoroughly in the next section.

 The fundamental solution of the Cauchy problem \eqref{CPWE} with data $(0,\psi),$ is the odd part of the wave group $U(t)$ which is given by 
 $A^{-1}\sin( t A),$  where  $\sin (tA)= \frac{1}{2i}( e^{i t A}- e^{-it A}).$     It follows from \eqref{sol-CP} that if $\mcz(t,z,z')$ denotes the kernel of $A^{-1}\sin(tA),$ then
 \begin{gather*}
 E_+(t,z,z')= \one_+(t) \mcz(t,z,z').
 \end{gather*}
If $W(t,z,z')$ is the Schwartz kernel of $e^{itA},$ then
\begin{gather*}
\frac{1}{2}(W(t,z,z')+ W(-t,z,z'))= \cos(t A)= \p_t \mcz(t,z,z'),
\end{gather*}
 and we will compute 
 \begin{gather*}
 \lim_{x\rightarrow 0} \lim_{x'\rightarrow 0} (\one_+W)(s-s'-\log x-\log x') \text{ and } 
  \lim_{x\rightarrow 0} \lim_{x'\rightarrow 0} (\one_+W)(-s+s'+\log x+\log x'),
  \end{gather*}
$\one_+=\one_+(t),$   in the sense of \eqref{sep-limits}.

 For $\psi\in C_0^\infty(\intx)\cap \Lac,$  $u(t,z)=e^{ i t A}\psi$  is the solution to
 \begin{gather}
 \begin{gathered}
 (D_t- A) u (t,z)=0 ,\\
 u (0,z)=\psi(z), \;\ \psi\in \Lac
 \end{gathered}\label{modwg}
 \end{gather}
and in particular,  it also satisfies
\begin{gather}
\begin{gathered}
(D_t^2-\Delta_g+\nsq) u(t,z)=0, \\
u(0,z)=\psi(z), \;\ \p_t u(0,z)= i A \psi.
\end{gathered} \label{CPart}
\end{gather}

Then in view of \eqref{sol-CP},
\begin{gather}
\begin{gathered}
\lim _{x\rightarrow 0} x^{-\novt} (\one_+u)(s-\log x,x,y)= -\mce_+ \left(i A \psi\right)(s,y)- \p_s \mce_+ \psi(s,y), \\
\lim _{x\rightarrow 0} x^{-\novt} (\one_-u)(s+\log x,x,y)= \mce_- \left(i A \psi\right)(s,y)+ \p_s \mce_+ \psi(s,y), \\
\end{gathered} \label{help-eq0904}
\end{gather}
But we know from \eqref{absvP} that
\begin{gather*}
\widehat{\ps \mce_\pm} (\psi)= i\la \mcp_\pm(\la)(\psi)= i\one_+(\la) \mcp_\pm (iA\psi) - \one_-(\la) \mcp_\pm(\la) (i A\psi),
\end{gather*}
and therefore we conclude that
\begin{gather}
\begin{gathered}
\lim _{x\rightarrow 0} x^{-\novt}  (\one_+u)(s-\log x,x,y)= -2\one_+(D_s) \p_s \mce_+(\psi)(s,y), \\
\lim _{x\rightarrow 0} x^{-\novt} (\one_-u)(s+\log x,x,y)=2 \one_+(D_s) \p_s \mce_-(\psi)(s,y).
\end{gathered}\label{limu}
\end{gather}

If we now work with the group $W_-(t)=e^{-it A},$ then $v(t,z)=e^{-itA} \psi(z)$ satisfies
\begin{gather*}
(D_t+A) v(t,z)=0, \\
v(0,z)=\psi(z)\in \Lac.
\end{gather*}
If we proceed as above, and now use $-A$ in place of $A$ in \eqref{help-eq0904},   we  obtain
\begin{gather}
\begin{gathered}
\lim _{x\rightarrow 0} x^{-\novt}  (\one_+)v(s-\log x,x,y)= -2\one_-(D_s) \p_s \mce_+(\psi)(s,y), \\
\lim _{x\rightarrow 0} x^{-\novt} (\one_-v)(s+\log x,x,y)=2 \one_-(D_s) \p_s \mce_-(\psi)(s,y).
\end{gathered}\label{limu3}
\end{gather}

We will now compute  $\lim_{x'\rightarrow 0} {x'}^{-\novt} (\one_+W)(t-s'-\log x', z,z'),$ $t>0.$  Notice that since $t$ and $s'$ remain fixed, the factor $\one_+$ becomes irrelevant.  Since $W(t,z,z')$ is the Schwartz kernel of  $e^{it A},$    we know by the group property that
\begin{gather*}
W(t+t',z,z')= \int_{X} W (t,z,w) W (t',w,z') \dvol_{g(w)}.
\end{gather*}
Then for  $t>0$ and $t'=-s'-\log x'>0,$ we have 
\begin{gather*}
{x'}^{-\novt} (\one_+W)(t-s'-\log x',z,z')= {x'}^{-\novt} W(t-s'-\log x',z,z')= \\ \int_X W(t,z,w) {x'}^{-\novt} W (-s'-\log x',w,z') \dvol_{g(w)}=
\int_X W(t,z,w) {x'}^{-\novt} \overline{W (s'+\log x',w,z')} \dvol_{g(w)},
\end{gather*}
where we used that $\overline{W(t,w,z)}= W(-t,w,z).$

We deduce from \eqref{limu} that for $\psi\in C_0^\infty(\intx) \cap \Lac,$ the limit as $x'\searrow 0$  is equal to
\begin{gather*}
\lim_{x'\rightarrow 0} \int_X   \psi(z)  {x'}^{-\novt} W(t-s'-\log x',z,z') \dvol_{g(z)}= \\ \lim_{x'\rightarrow 0} \int_X  \left(\int_X \psi(z)  W(t,z,w)   \dvol_{g(z)}\right) {x'}^{-\novt} \overline{W(s'+\log x',w,z')} \dvol_{g(w)}= \\
 2  \int_X \left( \int_{X} \psi(z)  W(t,z, w) \dvol_{g(z)}\right)  \overline{ \one_+(D_{s'})\p_{s'}\mce_-(s',w,y') } \dvol_{g(w)}. 
 \end{gather*}
If $\psi\in C_0^\infty,$   $ \int_{X} \psi(z) W(t,z, w) \dvol_{g(z)} \in H_0^k(X) \cap \Lac,$ for all $k$ and  it was shown in \cite{HoSa} using the energy estimates established in \cite{sbhrf}  that if $(f_1,f_2)\in E_{ac}(X)$ and if $V_\pm(x,s,y)$ is defined as in \eqref{radf}, then \eqref{radf1} still makes sense. Even though $V_\pm$ is not $C^\infty,$ but the restrictions of \eqref{radf1} are well defined, see the discussion between equation (3.15) and (3.18) of \cite{HoSa}. 
So we conclude that in the sense of distributions
\begin{gather}
\begin{gathered}
\lim_{x'\rightarrow 0}  {x'}^{-\novt} W(t-s'-\log x',z,z') \; \dot{ = }\;  \mcm_1(t,z,s',y')= \\ 2 \int_X  W(t,z, w)\overline{ \one_+(D_{s'})  \p_{s'}\mce_-(s',w,y')} \dvol_{g(w)}. 
\end{gathered}\label{sep-lim0}
\end{gather}

Now we can take the second limit, and we  pick $F(s,y)$ such that $\p_s^k F\in L^2(\mr \times \p X).$  We know from \eqref{mapprop} that
\begin{gather*}
\one_+(D_s)( \p_s \mce_-)^*F(w)= \int_{\mr\times \p X}  \overline{\one_+(D_{s'}) \p_{s'}\mce_-(s',w,y')} F(s',y') ds' \dvol_{h_0(y')}= f(w) \in \Lac,
\end{gather*}
and hence we deduce from \eqref{limu},  and  again the fact that the limit \eqref{radf1} still makes sense for data in $E_{\operatorname{ac}}(X),$ that
\begin{gather}
\begin{gathered}
\lim_{x\rightarrow 0} x^{-\novt} \int_{\mr \times \p X} \mcm_1(s-\log x, x,y, s',y') F(s',y') ds'\dvol_{h_0(y')}= \\
2 \lim_{x\rightarrow 0}  x^{-\novt} \int_{X} W(s-\log x,x,y, w) f(w) \dvol_{g(w)}=  \\
-4\one_+(D_s)  \int_{X}  \p_s  \mce_+(s,y, w) \left( \int_{\mr\times \p X}  \overline{\one_+(D_{s'}) \p_{s'}\mce_-(s',w,y')} F(s',y') ds' \dvol_{h_0(y')}\right) \dvol_{g(w)}.
\end{gathered}\label{sep-lim1}
\end{gather}

So we conclude that, in the sense of distributions
\begin{gather*}
\begin{gathered}
\lim_{x\rightarrow 0} x^{-\novt}  \mcm_1(s-\log x, x,y, s',y')= \\
-4 \int_X \one_+(D_s) \p_s \mce_+(s,y,w) \overline{ \one_+(D_{s'}) \p_{s'} \mce_-(s',w,y')}  \dvol_{g(w)}. 
\end{gathered}
\end{gather*}

 So finally we arrive at
\begin{gather}
\begin{gathered}
\lim_{x\rightarrow 0} \lim_{x'\rightarrow 0} (xx')^{-\novt} W(s-s'-\log x-\log x',z,z')= \\-4  \int_X \one_+(D_s) \p_s \mce_+(s,y,w)\overline{\one_+(D_{s'}) \p_{s'}\mce_-(s',w,y')} \dvol_{g(w)}\; \dot{=}\;
- 4\mck_{\mcs_+}(s,y,y',s'), \\
\text{ where }  \mck_{\mcs_+} \text{ is the Schwartz kernel of } \mcs_+= (\one_+(D_s) \mce_+ )\circ (\one_+(D_s) \mce_-)^*.
\end{gathered}\label{plus-scat}
\end{gather}

Perhaps one should also explain this in terms of propagation of singularities.  The characteristic variety of the operator $D_t-A$ is given by $\{\tau=\sigma(A)\},$ where $\sigma(A)$ is the principal symbol of $A.$ Hence $\tau>0,$ and the corresponding scattering operator is restricted to the $\{\tau>0\}$  component of the characteristic variety of $D_t^2-A^2.$

Next we need to do the same  computations for  $t= -s+s'+\log x +\log x'.$    But we now work with the group $W_-(t)= e^{-it A}=W(-t).$
Again, by the group property  we  have
\begin{gather*}
(xx')^{-\novt} W(-s+s'+\log x+\log x',z,z')= (xx')^{-\novt} W_-(s-s'-\log x-\log x')= \\ \int_X x^{-\novt} W_-(s-\log x,z,w) {x'}^{-\novt} W_-(-s'-\log x',w,z') \dvol_{g(w)}=\\
 \int_X x^{-\novt} W_-(s-\log x,z,w) {x'}^{-\novt} \overline{W_-(s'+\log x',w,z')} \dvol_{g(w)}=\\
\end{gather*}
We then proceed exactly as in the previous case, and now use \eqref{limu3} instead of \eqref{limu}, and if we repeat the same arguments used above, we  find that
\begin{gather}
\begin{gathered}
\lim_{x\rightarrow 0} \lim_{x'\rightarrow 0} (xx')^{-\novt}  W(-s+s'-\log x-\log x',z,z')=\\ -4\int_X \one_-(D_s) \p_{s} \mce_+(s,y,w)\overline{\one_-(D_{s'} \p_{s'}\mce_-(s',w,y')} \dvol_{g(w)}\;  \dot{=} \; - 4\mck_-(s,y,y',s'), \\
\text{ where }  \mck_{\mcs_-} \text{ is the Schwartz kernel of } \mcs_-= (\one_-(D_s) \mce_+ )\circ (\one_-(D_s) \mce_-)^*.
\end{gathered}\label{plus-scat1}
\end{gather}
Here of course,  $\tau<0$ on the characteristic variety of $D_t+A,$ and hence the corresponding scattering operator is restricted to the $\{\tau<0\}$  component of the characteristic variety of the wave operator.

So we finally conclude  from \eqref{split-scat}  that
\begin{gather*}
\mck_{\mcs} (s,y,s',y')= -\mck_{\mcs_+}(s,y,y',s')- \mck_{\mcs_-}(s,y,y,,s')=\\ \oq \lim_{x\rightarrow 0} \lim_{x'\rightarrow 0} (xx')^{-\novt} \left( e^{i (s-s'-\log x-\log x') A}+ e^{i(-s+s'+\log x+\log x') A}\right)= \\
\ha   \lim_{x\rightarrow 0} \lim_{x'\rightarrow 0} (xx')^{-\novt}  \cos \left((s-s'-\log x -\log x') A\right)= \\ \ha   \lim_{x\rightarrow 0} \lim_{x'\rightarrow 0} (xx')^{-\novt} \p_s A^{-1} \sin \left((s-s'-\log x -\log x') A\right)= \\ \ha   \lim_{x\rightarrow 0} \lim_{x'\rightarrow 0} (xx')^{-\novt} \p_s E_+(s-s'-\log x-\log x')
\end{gather*}
and this proves \eqref{kerscat-0}.

\section{The  underlying Lagrangian submanifolds}\label{ULM}

   The microlocal structure of the Schwartz kernel of  $\mcw(t)= A^{-1}\sin(tA),$  $A=\sqrt{\Delta_g-\nsq},$  as a distribution in $\mr \times \intx \times \intx$ is well known due the work of H\"ormander \cite{Hormander-FIO} when $t>0$ and to Duistermaat and H\"ormander \cite{DuHo} and Melrose and Uhlmann \cite{MU} up to $t=0.$   In particular, if the manifold $(\intx,g)$ is non-trapping, we know that for $t>0,$  $\mcw=\mcw_++\mcw_-$ where the Schwartz kernel of $\mcw_\pm,$ which we denote by $\mck_{\mcw_\pm(t)},$ is a Lagrangian distribution in  
\begin{gather*}
\mck_{\mcw_\pm} \in I^{-\fq}(\mr \times \intx \times \intx, \La_\pm, \Omega_{\mr\times \intx \times \intx}^\ha),
\end{gather*}
 and $\La_\pm$ is defined below in \eqref{deflapm0}, see for example  Theorem 5.1.2 of \cite{Duistermaat}.
 The non-trapping condition is necessary to guarantee that $\La_\pm$ are $C^\infty$ Lagrangian submanifolds, see for example \cite{Duistermaat,Hormander}.
   
    Our goal is to understand the microlocal structure of the limit \eqref{kerscat-0}, and to do that we first investigate the global microlocal structure of $(xx')^{-\novt}\mck_{\mcw_\pm }(s-\log x-\log x', z,z')$  and then investigate their behavior as $x, x'\searrow 0.$  We will work in  
  $T^*(\mr_t \times \intx\times\intx)\setminus 0$ equipped with  the canonical $2$-form 
\beq 
\widetilde\omega = d\tau \wedge dt+ \sum_{j = 1}^{n+1} d\zeta_j\wedge dz_j + \sum_{j = 1}^{n+1} d\zeta'_j\wedge dz'_j,
\eeq
where $(z,\zeta)$ and $(z',\zeta')$  will denote coordinates on the left and right factors of $T^*\intx\times T^*\intx$  respectively.

We will work with $-\ha\square=\ha(-D_t^2+\Delta_g),$  and we also distinguish between the lifts of the wave operator to the right or left factors of $X\times X.$  The principal symbol on the right and left  factors of $T^* \mr \times T^*\intx\times T^*\intx$  are defined to be  respectively 
 \begin{gather}
 Q_R(\tau, z,\zeta,z',\zeta')=\ha(-\tau^2+|\zeta'|_{g^*(z')}^2) \text{  and   } Q_L(\tau,z,\zeta,z',\zeta')=\ha(-\tau^2+|\zeta|_{g^*(z)}^2), \label{eqqr}
 \end{gather}
where $g^*$ is the dual metric to $g,$ and we will think of these as functions on  $T^*(\mbr\times\intx\times \intx).$ Their characteristic varieties are 
$\mcn_{Q_\bullet}=\{ (t,\tau, z,\zeta,z',\zeta'): Q_\bullet=0\}.$

 In local coordinates, the Hamilton vector field of $Q_\bullet,$ $\bullet=R,L,$ with respect to the canonical form $\widetilde \omega$  is given by
\begin{gather}
\begin{gathered}
H_{Q_L}= -\tau\frac{\p}{\p t} + \ha H_{|\zeta|^2}, \text{ where }   |\zeta|^2=|\zeta|_{g^*(z)}^2 \\ 
  \text{and } H_{|\zeta|^2}= \sum_{j=1}^{n+1}( \frac{\p |\zeta|^2}{\p {\zeta_j}}\frac{\p}{\p{z_j}}-\frac{\p |\zeta|^2}{ \p{z_j}}\frac{\p}{ \p{\zeta_j}}), \\
H_{Q_R}= -\tau\frac{\p}{\p t} + \ha H_{|\zeta'|^2}, \text{ where }   |\zeta'|^2=|\zeta'|_{g^*(z')}^2 \\ 
  \text{and } H_{|\zeta'|^2}= \sum_{j=1}^{n+1}( \frac{\p |\zeta'|^2}{\p {\zeta'_j}}\frac{\p}{\p{z'_j}}-\frac{\p |\zeta'|^2}{ \p{z'_j}}\frac{\p}{ \p{\zeta'_j}}).
\end{gathered}\label{HQ}
\end{gather}

We define the  bicharacteristic relation for $Q_L$
 \begin{gather}
\begin{gathered}
\Lambda=   \{(t,\tau,z,\zeta, z',-\zeta')\in T^*(\mr \times \intx \times \intx)\setminus 0: \; (t,\tau,z,\zeta)\in \mcn_{Q_L}, \;\
  (0,\tau,z',-\zeta')\in \mcn_{Q_L}, \\ \text{ lie on the same integral curve of } H_{Q_L}  \}.
  \end{gathered}\label{fulllag}
  \end{gather}
This definition is unchanged if we use $Q_R$ instead of $Q_L,$ since this just switches the roles of $(z,\zeta)$ and $(z',\zeta').$    We can also define $\Lambda$ as the flow-out of 
\begin{gather}
 \La^0= \{(t,\tau,z,\zeta, z', \zeta') \in T^*(\mr\times \intx \times \intx): \; t=0,\;  z=z',\;  \zeta=-\zeta',\;  \; \tau^2= |\zeta|_{g^*(z)}^2 \} \label{diagonal0}
 \end{gather} 
along the integral curves of $H_{Q_\bullet}.$ In other words,
\begin{gather}
\La= \bigcup_{\gamma \in \mr} \exp( \gamma H_{Q_R}) \La^0=  \bigcup_{\gamma \in \mr} \exp( \gamma H_{Q_L}) \La^0. \label{defla-J}
\end{gather}
Since the vector fields $H_{Q_R}$ and $H_{Q_L}$ commute,
\begin{gather}
\La= \bigcup_{\gamma_1, \gamma_2 \in \mr} \exp( \gamma_1 H_{Q_R}) \circ  \exp( \gamma_2 H_{Q_L}) \La^0=\bigcup_{\gamma_1, \gamma_2 \in \mr} \exp( \gamma_1 H_{Q_L}) \circ  \exp( \gamma_2 H_{Q_R}) \La^0. \label{defla-J1}
\end{gather}

 Notice that $\tau$ is constant along the integral curves of $H_{Q_\bullet},$ $\bullet=R,L,$ and   in view of the non-trapping assumption, $\La$ is a $C^\infty,$ conic, closed Lagrangian submanifold in  
$T^*(\mr \times \intx \times \mr \times \intx)\setminus 0,$ see Theorem 26.1.13 of \cite{Hormander}.   Since $\mcn_{Q_\bullet} \setminus 0$ consists of two disjoint components
\begin{gather*}
\mcn_{Q_\bullet} \setminus 0=\mcn_{Q_\bullet ,+}\cup\mcn_{Q_\bullet,-}, \text{ where } \\
\mcn_{Q_L,+}=\{\tau= -|\zeta|_{g^*{(z)}}\}, \;\ \mcn_{Q_L,-}=\{\tau= |\zeta|_{g^*{(z)}}\}, \\
\mcn_{Q_R,+}=\{\tau= -|\zeta'|_{g^*{(z')}}\}, \;\ \mcn_{Q_R,-}=\{\tau= |\zeta'|_{g^*{(z')}}\}.
\end{gather*}
We shall denote
\begin{gather}
\La=\La_+\cup \La_-, \text{ where } \La_\pm=\La\cap \mcn_{Q_\bullet,\pm}, \;\ \bullet=R,L. \label{deflapm0}
\end{gather}
Of course, in view of \eqref{defla-J}, the definition of $\La_\pm$ is independent of the choice of either $Q_\bullet.$ So the vector fields
\begin{gather}
\begin{gathered}
H_{+R}= |\zeta'| \p_t + \ha H_{|\zeta'|^2}   \text{ and }  H_{+L}= |\zeta| \p_t + \ha H_{|\zeta|^2}   \text{ are  tangent to } \La_+, \\
H_{-R}= - |\zeta'| \p_t + \ha H_{|\zeta'|^2}   \text{ and }  H_{-L}= -|\zeta| \p_t + \ha H_{|\zeta|^2}   \text{ are  tangent to } \La_-.
\end{gathered}\label{HQ2}
\end{gather}
 The vector fields $H_{\pm R}$ and $H_{\pm L}$ obviously commute, and therefore,  for $t_1, t_2\in \mr$ and a point $(t,\tau,z,\zeta,z',\zeta')\in T^*(\mbr\times\intx\times\intx)\setminus 0,$
\begin{gather*}
\exp t_2 H_{\pm L}\circ \exp t_1 H_{\pm R}(t,\tau,z,\zeta,z',\zeta')= \exp t_1 H_{\pm R}\circ \exp t_2 H_{\pm L}(t,\tau,z,\zeta,z',\zeta').
\end{gather*}
  Moreover, away from 
\begin{gather}
 \La_{\pm}^0= \{(t,\tau,z,\zeta, z', \zeta') \in T^*(\mr\times \intx \times \intx): \; t=0,\;  z=z',\;  \zeta=-\zeta',\;  \; \tau= \mp |\zeta|_{g^*(z)} \}, \label{diagonal1}
 \end{gather}

\begin{gather}
\La_\pm \setminus \La_{\pm }^0= \La_{\pm ,R} \cup \La_{\pm ,L}, \label{decompwtla}
\end{gather}
where
\begin{gather}
\begin{gathered}
\La_{\pm ,R} \doteq \bigcup_{\gamma > 0} \exp \gamma H_{Q_{\pm R}} (\La_{\pm }^0) \text{ and }
\La_{\pm ,L} \doteq \bigcup_{\gamma > 0} \exp \gamma H_{Q_{\pm L}} (\La_{\pm }^0).
\end{gathered}\label{eqlat}
\end{gather}
Observe that the relations $\La_{\pm,R}$ and $\La_{\pm,L},$ with the same sign are the inverse to each other.  To see that one just has to realize that  if $(t,\tau,z,\zeta)=\exp(\gamma H_{Q_{\pm R}})(t_1,\tau,z',\zeta'),$ then
$(t_1,\tau,z',\zeta')=\exp(\gamma H_{Q_{\pm L}})(t,\tau,z,\zeta).$   

 But to understand the global behavior of $\La_\pm,$  and the geometric structure of the radiation fields,  we will need to work on the blown-up space $\xo$ defined  above  and to deal with the radiation fields  we define
\begin{gather}
\begin{gathered}
\mcm_f : \mr\times \left(\xo \setminus (R\cup L)\right) \longrightarrow \mr \times \xo \\
(s, m) \longmapsto ( s -  \log \rho_L(m) -  \log \rho_R(m), m)=(t,m), \\
\mcm_b : \mr\times \left(\xo \setminus (R\cup L)\right) \longrightarrow \mr \times \xo \\
(s, m) \longmapsto ( s +  \log \rho_L(m) +  \log \rho_R(m), m)=(t,m),
\end{gathered}\label{sch}
\end{gather}
We define the corresponding forward and backward blow-ups
\begin{gather}
\begin{gathered}
\beta_{1f}=\beta_0\circ \mcm_f : \mr_s \times \left(\xo \setminus(R\cup L)\right) \longrightarrow \mr_t\times X \times X, \\
\beta_{1b}=\beta_0\circ \mcm_b : \mr_s \times \left(\xo \setminus(R\cup L)\right) \longrightarrow \mr_t\times X \times X.
\end{gathered} \label{defbeta1}
\end{gather}

 We will prove the following 
\begin{theorem}\label{soj}
Let $(\intx, g)$ be a non-trapping AHM.  Let $\rho_L, \rho_R$ be  boundary defining functions of $L$ and $R$ respectively.   Let  $\La_\pm \subset T^*(\mbr_t\times \intx\times\intx)$ be the $C^\infty$ Lagrangian submanifolds defined in \eqref{eqlat} and let $\beta_{1f}^*\La_\pm$  and
$\beta_{1b}^*\La_\pm$ denote the lifts of $\La_\pm$ by $\beta_{1f}$ and $\beta_{1b}$ respectively  in the interior of $\xo.$
 Then  $\beta_{1f}^*\La_\pm$ and  $\beta_{1b}^*\La_\pm$ have smooth extensions up to the boundary of  $T^*(\mbr_s \times \xo)$ which intersect the right, left and front faces transversally and are closed in $T^*(\mbr_s \times \xo)\setminus 0.$  Moreover if $\wtla_{\pm}^f$ denotes the extension of $\beta_{1f}^*\La_\pm$ and
 $\wtla_{\pm}^b$ denotes the extension of $\beta_{1b}^*\La_\pm,$ then 
 \begin{gather*}
 \wtla_{\pm}^\bullet \cap \{\rho_R=0\} \text{ is a Lagrangian submanifold of }  T^*(\mr_s \times X \times_0 \p X),\; \bullet= f,b, \\
 \wtla_{\pm}^\bullet \cap \{\rho_L=0\} \text{ is a Lagrangian submanifold of }  T^*(\mr_s \times \p X \times_0 X), \; \bullet= f,b, \\
  \wtla_{\pm}^\bullet \cap \{\rho_R=\rho_L=0\} \text{ is a Lagrangian submanifold of }  T^*(\mr_s \times \p X \times_0 \p X), \; \bullet= f,b.
 \end{gather*}
\end{theorem}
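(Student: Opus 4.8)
The plan is to reduce everything to a local analysis of the Hamilton flow of $Q_L$ (equivalently $Q_R$) near $\p X$, carried out in the product coordinates \eqref{prod}. Writing a covector on the left factor in the $0$-cotangent frame, $\zeta=\hat\xi\,\tfrac{dx}{x}+\hat\eta\cdot\tfrac{dy}{x}$, so that $|\zeta|^2_{g^*}=\hat\xi^2+|\hat\eta|^2_{h(x)}$ is smooth and nondegenerate up to $x=0$, one checks from \eqref{HQ} that in the variables $(x,y,\hat\xi,\hat\eta)$ the rescaled geodesic field $\ha H_{|\zeta|^2}$ extends smoothly to $\{x=0\}$, that $\{x=0\}$ is invariant, and that on it the flow is the geodesic flow of $(\p X,h_0)$ together with the dilation $\dot x/x=\hat\xi$ in the normal direction; in particular a bicharacteristic limiting onto $\p X$ does so orthogonally ($|\hat\xi|\to1$, $\hat\eta\to0$) with $x\sim ce^{\mp\gamma}$ in the unit–speed parameter $\gamma$. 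Consequently $t\mp\log x$ — which is precisely the variable $s$ built into $\mcm_f$ and $\mcm_b$ in \eqref{sch} — converges, together with all its derivatives in the remaining variables, as $\gamma$ tends to the appropriate infinity; this is the sense in which the logarithmic shift in $\beta_{1f}$, $\beta_{1b}$ exactly cancels the infinite length of the bicharacteristics. I would first isolate this as a normal-form lemma for the flow near $\p X$, which is the AHM analogue of the computations underlying \cite{SW} for compactly supported data and \cite{SaWang} in the semiclassical setting.

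With the lemma in hand, the smooth extension across $R$ and $L$ is comparatively direct: near $R=\{\rho_R=0\}$, with $z=(x,y)$ staying in the interior, the left generator $H_{Q_L}$ in \eqref{HQ} does not involve $x'$ at all while $H_{Q_R}$ carries $z'$ along a bicharacteristic limiting onto $\p X$, so the normal-form lemma applied to the right factor — in the coordinates $(x',y',\hat\xi',\hat\eta')$ on $R$ together with the interior variables $(s,z,\zeta)$ — produces a smooth closure $\wtla_\pm^f$ (resp.\ $\wtla_\pm^b$) of $\beta_{1f}^*\La_\pm$ (resp.\ $\beta_{1b}^*\La_\pm$) up to $\{\rho_R=0\}$, and symmetrically up to $L$. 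The front face is the delicate case: there $x$ and $x'$ degenerate simultaneously, so I would pass to projective coordinates on $\xo$ near $\ff$ (e.g.\ $\rho_\ff$, $x/x'$, $(y-y')/\rho_\ff$, $y'$) and use the classical fact that the lift of $g$ to $\xo$ restricts on $\ff$ to a bundle of copies of $\mbh^{n+1}$; on $\ff$ the renormalized flow is then the explicit hyperbolic geodesic flow, for which the limiting boundary-to-boundary relation and the convergence of $s$ can be written down by hand. Matching this model on $\ff$ with the normal-form lemma near $R$ and $L$ gives the smooth extension of $\wtla_\pm^\bullet$ up to all of $\p\big(T^*(\mbr_s\times\xo)\big)$, including the corners $R\cap\ff$ and $L\cap\ff$, and closedness of the extension follows from the non-trapping hypothesis exactly as in Theorem 26.1.13 of \cite{Hormander}.

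For the remaining assertions, observe that the change of variables \eqref{sch} is a canonical transformation (it changes the Liouville form of $T^*(\mbr_t\times\xo)$ by an exact form), so it takes $H_{Q_L}$ and $H_{Q_R}$ to Hamilton vector fields $\tilde H_{Q_L}$, $\tilde H_{Q_R}$ which, by the normal-form computation, extend smoothly up to $R$, $L$, $\ff$ and are tangent to each of these faces: for instance $\tilde H_{Q_R}(\rho_R)=H_{Q_R}(\rho_R)=\rho_R\hat\xi'$ vanishes on $\{\rho_R=0\}$, and analogously for $\rho_L$ and $\rho_\ff$. The transversality of $\wtla_\pm^\bullet$ to $R$, $L$ and $\ff$ is then read off from the normal form, since $\rho_\bullet$ remains a genuine coordinate reaching $0$ with nonzero logarithmic rate along the flow; and the fact that $\wtla_\pm^\bullet$ meets each face in a Lagrangian submanifold of the cotangent bundle of that face follows because, $\wtla_\pm^\bullet$ being the two-parameter flow-out of the lift of the diagonal $\La_\pm^0$ under the commuting pair $\tilde H_{Q_R},\tilde H_{Q_L}$ as in \eqref{eqlat}, and both fields being tangent to the face, the intersection is again a flow-out of a conic isotropic submanifold inside the reduced cotangent bundle of that face; alternatively one invokes the standard fact that a Lagrangian transverse to the conormal variety of a boundary hypersurface restricts to a Lagrangian on that hypersurface. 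The main obstacle is the front-face analysis: controlling the simultaneous degeneration of both factors and checking uniform smoothness of the extension across the corners $R\cap\ff$ and $L\cap\ff$, where the boundary normal form and the hyperbolic model on $\ff$ must be glued together.
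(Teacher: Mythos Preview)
Your outline is plausible but departs from the paper's argument at the main technical point, and this departure creates a genuine gap.

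You work throughout with the pulled-back Hamiltonians $\beta_{1\bullet}^*Q_L$, $\beta_{1\bullet}^*Q_R$ and observe (correctly) that their Hamilton vector fields extend smoothly and are \emph{tangent} to the faces $R$, $L$, $\ff$. You then want to obtain the smooth closure of the Lagrangian by a direct asymptotic analysis of the flow: $x\sim c\,e^{\mp\gamma}$ along bicharacteristics, hence $s=t\mp\log x$ converges ``with all derivatives.'' The difficulty is that tangency of the generating vector fields to a boundary face gives you \emph{no} mechanism for carrying the Lagrangian across that face: the integral curves of a tangent field never reach the boundary, so the flow-out description \eqref{eqlat} produces a submanifold sitting strictly in the interior, and one is left having to prove by hand that its closure is a smooth Lagrangian with boundary---precisely the delicate limit you defer to a ``normal-form lemma.'' The convergence of $s$ alone (even $C^\infty$ in the transverse parameters) does not control the full tangent plane of the Lagrangian as $\rho_\bullet\to0$, and your transversality claim (``$\rho_\bullet$ remains a genuine coordinate reaching $0$ with nonzero logarithmic rate'') is exactly the statement that needs proof, not an ingredient you can read off.

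The paper avoids this entirely by a rescaling you have not used: one sets $q_{\bullet L}=-\rho_L^{-1}\beta_{1\bullet}^*Q_L$ and $q_{\bullet R}=-\rho_R^{-1}\beta_{1\bullet}^*Q_R$ and checks (Proposition~\ref{vfields}, in four explicit coordinate regions covering the corners) that $H_{q_{\bullet L}}$ is smooth up to all faces, tangent to $\ff$ and to $R$, and \emph{transversal} to $L$ on $\{\sigma\neq0\}$; symmetrically for $H_{q_{\bullet R}}$. On the characteristic set the integral curves of $H_{\beta_1^*Q_L}$ and $H_{q_L}$ coincide as unparametrized curves, so the Lagrangian is equally the joint flow-out of $\beta_1^*\Sigma$ under $H_{q_L}$ and $H_{q_R}$; transversality then gives the smooth extension across $L$ and $R$ immediately (one can even straighten $H_{q_R},H_{q_L}$ to $\p_{x_1},\p_{x_2}$ by a symplectomorphism and extend by translation). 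The same straightening shows $q_R=\xi_1$, $q_L=\xi_2$ vanish on the Lagrangian, so the intersection with $\{\rho_\bullet=0\}$ lands in $\{\xi_\bullet=0\}$ and is Lagrangian in the face---a cleaner argument than your ``flow-out of an isotropic in the reduced bundle,'' which as stated does not verify the dimension count. Your front-face model (hyperbolic geodesic flow on the fibres of $\ff$) is consistent with the paper's picture, but again the paper handles $\ff$ not by matching an explicit model but by the same tangency/transversality dichotomy for $H_{q_\bullet}$, which works uniformly through the corners $R\cap\ff$, $L\cap\ff$, $R\cap L\cap\ff$.
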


A slightly different version of this theorem was proved in \cite{SaWang}, but  we will prove it for the convenience of the reader.    Similar results associated with the construction of the semiclassical resolvent  were proved by  Melrose, S\'a Barreto and Vasy \cite{MSV}, Chen and Hassell \cite{ChenHa} and by Wang \cite{Wang}.  

Notice that the change of variables $t\longmapsto s=t+\gamma,$ where $\gamma=\log \rho_R+\log \rho_L$ induces a map on $T^*(\mr \times \xo)$ which amounts to the shift along the fibers of $T^*(\xo)$ by $d\gamma.$ Namely,
\begin{gather}
\begin{gathered}
S: T^*(\mr \times \xo) \longrightarrow T^*(\mr \times \xo) \\
(m,\tau,\nu)   \longmapsto (m, \sigma, \nu+ d\gamma).
\end{gathered}\label{shift-map}
\end{gather}

One can reinterpret Theorem \ref{soj} as
\begin{gather}
\begin{gathered}
\beta_{1f}^*\La_+= \beta_0^* \La_+ + \sigma d\gamma, \\
\beta_{1b}^*\La_+= \beta_0^* \La_+ - \sigma d\gamma. 
\end{gathered}\label{shift}
\end{gather}
The analogue of \eqref{shift}  in the semiclassical case was observed by Chen and Hassell \cite{ChenHa} and by Wang \cite{Wang} and by S\'a Barreto and Wang \cite{SaWang}

 The key to proving this result, see Proposition \ref{vfields} below, is that  if  $q_{\bullet R}=-\frac{1}{\rho_R}\beta_{1\bullet}^*Q_R$ and
$q_{\bullet L}= -\frac{1}{\rho_L} \beta_{1\bullet}^* Q_L,$ $\bullet=b,f,$ then $q_{\bullet R}$ and $q_{\bullet L}$ extend to  functions in $C^\infty(T^*(\mr_s\times \xo))$ and the Hamilton vector fields  $H_{q_{\bullet L}}$ and $H_{q_{\bullet R}}$ are tangent to $\mr_s\times \ff$ and away from  $\sigma=0$ (where $\sigma$ is the dual variable to $s$) $H_{q_{\bullet R}}$ is transversal to $\mr_s\times R,$ and  tangent to $\mr_s \times L,$ while $H_{q_{\bullet L}}$ is transversal to $\mr_s\times L$ and tangent to $\mr_s \times R.$ 
\begin{proof}
  We will work with $\beta_{1f},$  but the case of $\beta_{1b}$ is identical.  First, notice that the result is independent of the choice of $\rho_R$ or $\rho_L.$   If  $\tilde\rho_L, \tilde \rho_R$ are  boundary defining functions of the left and right faces, then $\rho_L = \tilde \rho_L f_L$ and $\rho_R = \tilde \rho_R f_R$ for some $f_L, f_R \in C^\infty(\xo)$ with $f_L>0,$ $f_R > 0.$ If  $\tilde{s}= t + \log \tilde \rho_L + \log \tilde \rho_R$ and $s=t + \log\rho_R+\log\rho_L,$ then $\tilde{s}=  s + \log(f_L f_R),$ and the map $(s,m)\mapsto(\tilde{s},m)$ is a global diffeomorphism of $\mr_s\times \xo.$

 As mentioned above, the main ingredient in the proof of Theorem \ref{soj} is the following 
 \begin{prop}\label{vfields}  Let $\rho_R, \rho_L\in C^\infty(\xo)$ be defining functions of  $R$ and $L$ respectively.   Let $\beta_{1f}$ and $\beta_{1b}$ be the maps defined in \eqref{sch}  and let   $q_{\bullet R}=-\frac{1}{\rho_R}\beta_{1\bullet}^*Q_R$ and
$q_{\bullet L}= -\frac{1}{\rho_L} \beta_{1\bullet}^* Q_L,$ $\bullet=b,f.$ Then $q_{\bullet R}$ and $q_{\bullet L}$ extend to  functions in $C^\infty(T^*(\mr_s\times \xo))$ and the Hamilton vector fields  $H_{q_{\bullet L}}$ and $H_{q_{\bullet R}}$ are tangent to $\mr_s\times \ff.$ Moreover, if  $\sigma$ is the dual variable to $s,$ then away from $\sigma=0,$ $H_{q_{\bullet R}}$ is transversal to $\mr_s\times R,$ and $H_{q_{\bullet L}}$ is transversal to $\mr_s\times L.$
 \end{prop}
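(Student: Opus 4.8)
The plan is to verify the three assertions by explicit computation in the projective coordinate charts on $\xo$ near its boundary hypersurfaces, using the product normal form \eqref{prod} for $g$. First observe that all three statements are independent of the choice of $\rho_R$ and $\rho_L$: replacing $\rho_R$ by $f_R\rho_R$ with $f_R\in C^\infty(\xo)$ positive changes $\gamma=\log\rho_L+\log\rho_R$ by a smooth bounded term and $q_{\bullet R}$ by $f_R^{-1}q_{\bullet R}+(\text{smooth})$, and similarly for $\rho_L$. So I am free to use the most convenient defining functions in each chart. Away from $\ff$ the map $\beta_0$ is a diffeomorphism, so the only faces that require care are $R$ (coordinates $(x,y,x',y')$, with $\rho_R=x'$), $L$ (with $\rho_L=x$), and their intersections with $\ff$; near $\ff$ I use the two projective charts $x=\rho_{\ff}$, $s_R=x'/x$, $v=(y-y')/x$ (valid up to $R\cap\ff$, with $\rho_R=s_R$) and $x'=\rho_{\ff}$, $s_L=x/x'$, $u=(y-y')/x'$ (valid up to $L\cap\ff$, with $\rho_L=s_L$), together with a chart centered on the interior of $\ff$ in which $x$ and $x'$ are comparable.

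The key computation is the following. Writing the covector $\zeta'$ on the right factor in $0$-form coordinates $\xi'=x'\zeta'_{x'}$, $\eta'=x'\zeta'_{y'}$, the right symbol from \eqref{eqqr} becomes $Q_R=\ha(-\tau^2+\xi'^2+|\eta'|^2_{h(x')^{-1}})$, smooth and non-degenerate in these variables. In each chart I express $(\tau,\xi',\eta')$ in terms of the fibre coordinates on $T^*(\mr_s\times\xo)$ produced by $\beta_{1f}=\beta_0\circ(\text{shift})$, where the substitution $s=t+\log\rho_L+\log\rho_R$ lifts to the fibre translation $\mu\mapsto\mu+\sigma\,d\gamma$ described in \eqref{shift-map}, \eqref{shift}, with $\sigma=\tau$. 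The shift is arranged exactly so that $\tau$ enters through the combination $\xi'=\rho_R\mu_{\rho_R}+\sigma$ modulo terms that are smooth and vanish at $\rho_R=0$ (here $\mu_{\rho_R}$ is the fibre variable dual to the chosen defining function of $R$), so that $-\tau^2+\xi'^2=\rho_R\mu_{\rho_R}(\rho_R\mu_{\rho_R}+2\sigma)$ is divisible by $\rho_R$, while $\eta'$ acquires an overall factor $\rho_R$. Hence $\beta_{1f}^*Q_R=\rho_R\,q_{fR}$ with $q_{fR}$ smooth up to all faces (a quadratic polynomial in the fibre variables with smooth coefficients), and $q_{fR}|_{\rho_R=0}=-\sigma\mu_{\rho_R}$. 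The case $\beta_{1b}$ is identical with $\sigma$ replaced by $-\sigma$, so $q_{bR}|_{\rho_R=0}=\sigma\mu_{\rho_R}$; interchanging the two factors gives $\beta_{1\bullet}^*Q_L=\rho_L\,q_{\bullet L}$ with $q_{\bullet L}$ smooth and $q_{\bullet L}|_{\rho_L=0}=\mp\sigma\mu_{\rho_L}$.

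From this the three assertions follow. Smoothness of $q_{\bullet R}$ and $q_{\bullet L}$ on $T^*(\mr_s\times\xo)$ is the content of the chart formulas. For the transversality statement, $H_{q_{\bullet R}}\rho_R=\p q_{\bullet R}/\p\mu_{\rho_R}$, and along $R$ the formula gives $H_{q_{\bullet R}}\rho_R|_{\rho_R=0}=\mp\sigma$, which is nonzero away from $\sigma=0$; hence $H_{q_{\bullet R}}$ is transversal to $\mr_s\times R$ there, and the symmetric computation gives $H_{q_{\bullet L}}$ transversal to $\mr_s\times L$ away from $\sigma=0$. For tangency to $\mr_s\times\ff$, $H_{q_{\bullet R}}\rho_{\ff}=\p q_{\bullet R}/\p\mu_{\ff}$: in the chart on the interior of $\ff$ and in the chart up to $R\cap\ff$ one may take $\rho_{\ff}$ pulled back from the left copy, and then $q_{\bullet R}$ — built from $Q_R$, which involves only $\tau$ and the right-copy variables — carries no $\mu_{\ff}$ dependence, so $\p q_{\bullet R}/\p\mu_{\ff}\equiv 0$; in the chart up to $L\cap\ff$ one must take $\rho_{\ff}=x'$ and $q_{\bullet R}$ does depend on $\mu_{\ff}$, but there $Q_R=\ha(-\sigma^2+(\rho_{\ff}\mu_{\ff}-\cdots)^2+|\rho_{\ff}\mu_{y'}-\cdots|^2_{h^{-1}})$, so $\p q_{\bullet R}/\p\mu_{\ff}$ has an overall factor $\rho_{\ff}$ and vanishes on $\ff$. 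Thus $H_{q_{\bullet R}}$, and symmetrically $H_{q_{\bullet L}}$, is tangent to $\mr_s\times\ff$.

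The computations themselves are routine; the main obstacle is bookkeeping — keeping straight the several projective charts, the fact that the function serving as $\rho_R$ (resp.\ $\rho_L$) is a genuine boundary defining function only near $R$ and $R\cap\ff$ (resp.\ near $L$ and $L\cap\ff$) and is merely a positive smooth function in the remaining charts, and the verification that the divisibility of $\beta_{1\bullet}^*Q_R$ by $\rho_R$ and the overall $\rho_{\ff}$ factor in $\p q_{\bullet R}/\p\mu_{\ff}$ near $L\cap\ff$ really do emerge from the algebra. These last points are exactly where the specific shift $s=t\pm(\log\rho_L+\log\rho_R)$ is used.
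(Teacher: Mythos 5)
Your argument is correct and proceeds by essentially the same method as the paper's proof: the shift to $s=t+\log\rho_L+\log\rho_R$ followed by explicit computation in projective coordinate charts on $\xo$ covering the boundary hypersurfaces, with smoothness, transversality, and tangency read off from the resulting formulas for the lifted symbols and Hamilton vector fields. The paper works through its four boundary regions one at a time (near $L$ or $R$; near $L\cap\ff$ or $R\cap\ff$; near $L\cap R$; near $L\cap R\cap\ff$) and records the symbols and vector fields in each, whereas you organize the same computation around the single identity $x'\zeta'_{x'}=\rho_R\mu_{\rho_R}+\sigma$ modulo terms vanishing at $R$, which makes the divisibility of $\beta_{1\bullet}^*Q_R$ by $\rho_R$ and the value $H_{q_{\bullet R}}\rho_R|_R=\mp\sigma$ transparent; this is a tidier presentation of the same argument rather than a different route.
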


 The proof of this result is carried out in in a more general setting in Theorem 6.1 and Theorem 6.8 of \cite{SaWang}, but we will do it again here in this particular case, for the convenience of the reader.
 \bpf   We will prove this Proposition in local coordinates valid near $\p(\mr_s \times \xo).$   First, we choose local coordinates  $z = (x, y)$ and $z' = (x', y')$ in which \eqref{prod} holds.  
We  divide the boundary of $\mr_s\times \xo$ into four regions: \\
Region 1:  Near $\mr_s \times L$ and away from $\mr_s\times (R\cup \ff),$ or near $\mr_s\times R$ and away from $\mr_s\times (L \cup \ff).$ \\
Region 2: Near $\mbr_s\times (L \cap \ff)$ and away from $\mbr_s\times R$, or near $\mbr_s\times (R\cap \ff)$ and away from $\mbr_s\times L.$ \\
Region 3:  Near $\mbr_s\times (L\cap R)$ but away from $\mbr_s\times \ff.$ \\
Region 4: Near $\mbr_s\times (L\cap R \cap \ff).$

  First we analyze region 1, near $\mr_s\times L $ but away from $\mr_s \times R$ and $\mr_s \times \ff.$  The case near $\mr_s\times R$ but away from $\mr_s\times L$ and $\mr_s\times \ff$ is identical.   Since we are away from $R,$ we have $\rho_R>\del,$ for some $\del>0,$  and hence $\log \rho_R$ is $C^\infty.$ In this region we may take $x$ as a defining function of $L,$ and instead of \eqref{sch}, we set $s = t + \log x$.   In fact,   the map $(s,m) \longmapsto (s+\log\rho_R,m)$ is a diffeomorphism in the region where $\rho_R>\del,$ and hence  the statements about $q_L$ and $H_{q_L}$ in the lemma are true in this region whether we take $s=t+\log x$ or $s=t+\log x + \log \rho_R.$  In the case near $\mr_s\times R$ but away from $\mr_s\times L$ and $\mr_s\times  \ff$ one sets $s=t+\log x'.$  These particular cases were studied in  \cite{SW}.
  
The change of variables
\begin{gather}
s=t+\log x \label{souj1}
\end{gather}
induces the symplectic change on $T^*(\mbr\times \intx\times \intx)$

\begin{gather}\label{symch}
\begin{gathered}
(x,y,\xi,\eta,t,\tau) \longmapsto (x,y,\txi,\eta,s,\sigma), \\
\text{ where } \txi= \xi-\frac{1}{x}\tau, \ \ \sigma = \tau.
 \end{gathered}
  \end{gather}
  
 In coordinates \eqref{prod}, 
 \begin{gather*}
 g_L^*(x,y,\xi,\eta)= x^2 \xi^2+ x^2 h(x,y,\eta),
 \end{gather*}
 and so
 \begin{gather*}
 \beta_1^*Q_L=-x\sigma\txi-\ha x^2(\txi^2+h(x,y,\eta)), \text{ and hence } \\
   q_L= -\frac{1}{\rho_L} \beta_1^*Q_L=\sigma\txi+\ha x(\txi^2+h(x,y,\eta)).
 \end{gather*}

We have
 \begin{gather*}
 H_{q_L}= (\sigma+x\txi)\p_x+\txi\p_s +\ha x H_{h(x,y,\eta)}- \ha(\txi^2+h(x,y,\eta)+ x\p_x h(x,y,\eta)) \p_{\txi}.
 \end{gather*}
 In particular, $\sigma$ remains constant along the integral curves of $H_{q_R},$ and
 \begin{gather*}
H_{q_L}|_{\{x=0\}}= \sigma \p_x+\txi\p_s - \ha(\txi^2 + h(0, y, \eta))\p_{\txi}.
\end{gather*}
So if $\sigma\not=0,$  $H_{q_L}$ is transversal to $\p X.$

Next we work in region 2 near $\mr_s \times (L\cap \ff),$ but away from $\mr_s\times R$.  The case near $\mr_s\times (R\cap \ff)$ but away from $\mr_s\times L$ is very similar.   In this case, $\rho_R=x'/R>\del,$ and so it is better to use projective coordinates
\begin{equation}\label{eqc1}
X = \frac{x}{x'},\ \ Y = \frac{y - y'}{x'},\ \ x' \text{ and } y'. 
\end{equation}
In this case, $X$ is a boundary defining function for $L$ and $x'$ is a boundary defining function for $\ff$. Since $\beta_0$ is a diffeomorphism in the interior of $X\times_0 X$, it induces a symplectic change of variables 
\beq
(x, y, \xi,\eta, x', y', \xi', \eta')\in T^*(\intx \times \intx) \longmapsto (X, Y, \la,\mu, x', y', \la', \mu') \in T^*(X\times_0 X),
\eeq
given by
\beq
\la = x' \xi,\ \ \mu = x'\eta,\ \ \la' = \xi' + \xi X + \eta Y  \text{ and } \mu' = \eta + \eta'.
\eeq
and  $Q_L$  becomes
\beq
\beta_0^*Q_{L}  = \ha(\tau^2 - X^2( \la^2 + h(x'X, x'Y + y', \mu))),
\eeq
and here we used the fact that $h(x,y,\eta)$ is homogeneous of degree two in $\eta.$

  Away from the face $R$, $\rho_R>\del,$ for some $\del,$ and the function $\log \rho_R$ is smooth. Therefore, as argued above in the case of region 1, the transformation  $(s,m) \mapsto (s+\log \rho_R,m)$ is a $C^\infty$ map away from $\{\rho_R=0\},$ and so it suffices to take
\begin{equation}\label{ch1}
s = t + \log X.
\end{equation}

The change of variable \eqref{ch1}  induces the following symplectic change of variables
\begin{gather}
\begin{gathered}
T^*(\mbr_t \times \intx \times \intx) \longrightarrow  T^*(\mbr_s \times X\times_0 X),\\
(t, \tau, x, y,\xi,\eta, x', y', \xi', \eta')  \longmapsto (s,\sigma, X, Y,\tilde\lambda,\mu, x', y', \la', \mu')\\
\text{where } \tilde \la = \la - \frac{\tau}{X}, \ \ \sigma = \tau,
\end{gathered}
\end{gather}
and the canonical $2$-form on $T^*(\mbr_s\times\xo)$ is given by
\beq
\omega^0 = d\tilde\la\wedge d X + d\mu \wedge dY + d\la' \wedge dx' + d\mu'\wedge dy'.
\eeq
Hence
\beq
\beta_1^* Q_{ L} = - \tilde\la\sigma X - \ha X^2 ({\tilde\la}^2 + h(x'X, x'Y + y', \mu)),
\eeq
and we conclude that
\beq
q_{L} = -\frac{1}{\rho_L}\beta_1^* Q_L=-\frac{1}{X}\beta_1^*Q_L = \tilde\la\sigma + \ha X (\tilde\la^2 + h(x'X, x'Y + y', \mu)).
\eeq
Hence vector field $H_{q_{L}}$ is given by
\begin{gather}
\begin{gathered}
H_{q_L} = \tilde\la \frac{\p}{\p s} + (\sigma + X \tilde\la)\frac{\p }{\p X}  - \ha(\tilde\la^2 + h + x'X\p_X h)\frac{\p}{\p \tilde\la} + \frac{X}{2} H_h + T,
\end{gathered}\label{liftedHA}
\end{gather}
where $T$ is a smooth vector field in $\p_{\la'}, \p_{\mu'}$.   So away from $\sigma=0,$ $H_{q_L}$ is transversal to $\mr_s\times L.$

Next we analyze region 3, near $\mr_s\times (L\cap R)$ and away from $\mr_s\times \ff$.  Here $x, x'$ are boundary defining functions for $\mr_s\times L$ and  $\mr_s\times R$ respectively. In this case, as discussed above, we can take
\beq
s = t + \log x + \log x',
\eeq
which induces the following symplectic change of variable
\begin{gather*}
(t, \tau,x, y,\xi,\eta, x', y' , \xi', \eta') \longmapsto (s,\sigma, x, y, \tilde\xi, \eta, x', y', \tilde\xi', \eta'),\\
\text{where } \tilde \xi =  \xi - \frac{\tau}{x}, \ \ \tilde \xi' = \xi' - \frac{\tau}{x'}, \ \ \sigma = \tau.
\end{gather*}
The symbols can be computed as in the case near $\mr_s\times L$ away from $\mr_s\times \ff$ and $\mr_s\times R$. In particular,
\begin{gather*}
\beta_1^*Q_L=-x\sigma\txi-\ha x^2(\txi^2+h(x,y,\eta))  \text{ and so } q_L=-\frac{1}{\rho_L} \beta_1^* Q_L=\sigma\txi+\ha x(\txi^2+h(x,y,\eta)),\\
\beta_1^*Q_R=-x\sigma\txi'-\ha x'^2(\txi'^2+h(x',y',\eta'))  \text{ and so }  q_R=\sigma\txi'+\ha x'(\txi'^2+h(x',y',\eta')).
\end{gather*}

The Hamilton vector fields are given by
\beq
\bsp
& H_{q_L}= (\sigma+x\txi)\p_x+\txi\p_s +\ha x H_{h(x,y,\eta)}- \ha(\txi^2+h(x,y,\eta)+ x\p_x h(x,y,\eta)) \p_{\txi}, \\
& H_{q_R}= (\sigma+x'\txi')\p_{x'}+\txi'\p_s +\ha x' H_{h(x',y',\eta')}- \ha(\txi'^2+h(x',y',\eta')+ x'\p_{x'} h(x',y',\eta')) \p_{\txi'}.
\end{split}
\eeq 
We conclude that, away from $\sigma=0,$  $H_{q_L}$ is transversal to $\mr_s\times L=\{x = 0\}$ while $H_{q_R}$ is transversal to $\mr_s\times R=\{x' = 0\}.$ 

Finally, we analyze region 4,  near  the co-dimension $3$ corner $\mr_s\times (L\cap \ff\cap R).$  Here we also work with suitable projective coordinates, and without loss of generality, as in \cite{MSV} we may take $\rho_{\ff}=y_1 - y_1' \geq 0$ and take the following coordinates
\begin{equation}\label{eqc2}
u = y_1 - y_1', \ \ w = \frac{x}{y_1 - y_1'}, \ \ w' = \frac{x'}{y_1 - y_1'},\ \ y'  \text{ and } Z_j = \frac{y_j - y_j'}{y_1 - y_1'}, \ \ j = 2, 3,\cdots n.
\end{equation}
Here $w, w'$ and $ u$ are boundary defining functions for $\mr_s\times L,$  $\mr_s\times R$ and $\mr_s\times \ff$ faces respectively.  The induced symplectic change of variables
\begin{gather}
\begin{gathered}
T^*(\intx\times\intx) \longrightarrow  T^*(X\times_0 X) \\ 
(x, y, \xi,\eta, x', y', \xi', \eta') \longmapsto (w,u,Z,\la,\nu,\mu, w', y',  \la', \mu') \\
\text{ where } \\
\la  = \xi u, \ \ \la'  = \xi' u, \ \ \nu = \xi w + \xi'w' + \eta_1 + \sum_{j=2}^n \eta_j Z_j, \\
\mu' = \eta + \eta',  \ \ \mu_j = \eta_j u, \ \ j = 2, 3, \cdots n. 
\end{gathered}\label{eqsym}
\end{gather}

In these coordinates, the symbols of $Q_L$ and $Q_R$ are given by
\begin{gather*}
\beta_0^* Q_{ L}  = \ha( \tau^2 -  w^2(\la^2 + h(uw, y, u\eta))),\\
 \beta_0^* Q_{ R}  = \ha( \tau^2 -  w'^2(\la^2 + h(uw', y', u\mu' - u\eta))),
\end{gather*}
where 
\beq
y = (y_1' + u, y_2' + uZ_2, \cdots, y_{n}' + uZ_{n}),\ \ u\eta = (u\nu - \la w - \la' w' - \sum_{j = 2}^n \mu_j Z_j, \mu).
\eeq
In this case,  we set
\begin{equation}\label{ch2}
s = t + \log w + \log w',
\end{equation}
which  induces the symplectic transformation
\beq
\bsp
& T^*(\mbr_t \times \intx\times\intx) \longrightarrow  T^*(\mbr_s \times X\times_0 X),\\
& (t, \tau,x, y, \xi,\eta, x', y',  \xi', \eta')  \longmapsto (s,\sigma, w,u,Z, \tilde\la, \nu,\mu, w', y', \tilde\la',  \mu') \\
&  \text{where } \tilde \la = \la - \frac{\tau}{w}, \ \ \tilde \la' = \la'- \frac{\tau}{w'}, \ \ \sigma = \tau.
\end{split}
\eeq
Here the canonical $2$-form on $T^*(\mbr_s \times \xo)$ is given by 
\beq
\omega^0 = d\sigma \wedge ds + d\tilde\la \wedge dw + d\tilde\la'\wedge dw' + d\nu\wedge du + d\mu\wedge dZ + d\mu' \wedge d y'.
\eeq
The lifts of the symbols $Q_{L}$ and $Q_{R}$ become
\begin{gather*}
\beta_1^*Q_{ L} = -w\sigma \tilde \la - \ha w^2 (\tilde\la^2 + h(uw, y, \tilde \eta)),\\
 \beta_1^*Q_{ R} = -w'\sigma \tilde\la' - \ha w'^2 ( \tilde\la'^2 + h(uw', y', u\mu' - \tilde \eta)),
\end{gather*}
where $\tilde \eta \doteq u\eta =  (u\nu - \tilde\la w - \tilde\la' w' - 2\sigma - \sum_{j = 2}^n \mu_j Z_j, \mu)$.  Therefore, in these coordinates
\begin{gather*}
q_{L} = -\frac{1}{\rho_L}\beta_1^*Q_L =  \sigma \tilde\la + \frac{1}{2} w(\tilde\la^2 +  h(uw,  y, \tilde \eta)),\\
q_{R} =-\frac{1}{\rho_R}\beta_1^*Q_R=  \sigma \tilde\la' + \frac{1}{2}w '( \tilde\la'^2 +  h(uw', y', u\mu' - \tilde \eta)).
\end{gather*}
Hence the Hamilton vector fields are of the form
\begin{gather}
\begin{gathered}
H_{q_{ L}} = (\sigma + w\tilde\la - \ha w^2 \p_{\tilde\eta}h(uw, y, \tilde \eta))\frac{\p}{\p w} + T_L,\\
  H_{q_{R}} =  (\sigma + w'\tilde\la' + \ha w'^2 \p_{\tilde\eta}h(uw', y', u\mu'- \tilde \eta) )\frac{\p}{\p w'} + T_R,
\end{gathered}\label{VF}
\end{gather}
 where $T_L, T_R$ are smooth vector fields on $T^*(\mbr_s \times X\times_0 X)$ with no $\frac{\p}{\p w},$ $\frac{\p}{\p w'}$ or $\frac{\p}{\p \sigma}$  terms. 
Notice that these vector fields are $C^\infty$ up to the front face, and that away from $\sigma=0,$ the vector $H_{q_L}$ is transversal to $\mr_s\times L$  and $H_{q_R}$ is transversal to $\mr_s\times R.$  This shows that the transversality to $L$ and $R$ holds up to the corner.
This ends the proof of the Lemma.
\end{proof}

Now we  conclude the proof of Theorem \ref{soj}.  Since in the interior of $\xo,$ $\beta_0$ is a $C^\infty$ diffeomorphism between $C^\infty$ open manifolds,  $\beta_0^*\tilde\La$ is a $C^\infty$ Lagrangian manifold in the interior of $\mr_t\times \xo,$ and it is defined as
 \beq
\beta_0^*\tilde\La = \bigcup_{t_1\geq 0, t_2\geq 0} \exp t_2 \beta_0^*H_{Q_{ L}}\circ \exp t_1 \beta_0^*H_{Q_{R}} (\beta_0^*\Sigma),
 \eeq
where
\beq
\Sigma = \{(0, \tau, x,y, \xi,\eta, x, y,-\xi, -\eta) : x^2 \xi^2 + x^2 h(x, y, \eta) = \tau^2\}.
\eeq
  
In projective coordinates
 \begin{gather*}
 x', \;\ X= \frac{x}{x'}, \;\ Y=\frac{y-y'}{x'}, \;\ y',
 \end{gather*}
valid near $\ff$ and $L,$  $\beta_0^*\Sigma$
 can be written as
\beq
\beta_0^*\Sigma = \{(0,1,X, Y,\la,\mu, x', y' , \la', \mu'):  X = 1, Y = 0, \la' = \mu' = 0,  \la^2 + h(x', y', \mu) = \tau^2\},
\eeq
which is a $C^\infty$ submanifold of $T^*(\mr_t\times X\times_0 X)$ that extends smoothly up to the front face $\mr_t\times \ff=\{x'=0\}.$  Since $\beta_0^*\Sigma$ does not intersect either $\mr_s\times L$ or $\mr_s\times R,$ these properties do not change if we set $s=t+\log \rho_R+\log \rho_L,$  and hence
$\beta_1^*\Sigma$  is a $C^\infty$ submanifold of $T^*(\mbr_s \times \xo)$ that has a $C^\infty$ extension up to $\mr_s\times \ff.$   

 In the interior of $\mr_s\times \xo,$ $\beta_1^*Q_{L}$ and $\beta_1^*Q_R$ vanish on $\beta_1^*\tilde\La,$ and hence the integral curves of $H_{q_{L}}$ and 
 $H_{q_R}$ on $\beta_1^*\tilde \La$  coincide with the integral curves of $H_{\beta_1^*Q_{L}}$ and $H_{\beta_1^*Q_R}$ respectively. Therefore, in the interior of $\mr_s\times \xo$ and across to the front face,  $\beta_1^*\tilde\La$ is the union of integral curves of $H_{q_{L}}$ and $H_{q_R}$ emanating from $\beta_1^*\Sigma.$

  Since $Q_R$ and $Q_L$ do not depend on $t,$ it follows that $q_L$ and $q_R$ do not depend on $s,$ and hence $\sigma$ remains constant along the integral curves of $q_L$ and $q_R.$  Since $\sigma=\tau\not=0$ on $\beta_1^* \Sigma,$  it follows that $\sigma\not=0$ on  $\beta_1^*\tilde \La$ in the interior of $\mr_s \times \xo.$ However,  we have also shown that, up to the front face, in the region $\sigma=1,$ $H_{q_L}$ is transversal to $\mr_s\times L$ while $H_{q_R}$ is transversal  up to $\mr_s\times  R.$

  Recall from \eqref{liftedHA} that $H_{q_L}$ and $H_{q_R}$ are $C^\infty$ up to $\mr_s\times \ff$ and are tangent to $\mr_s\times \ff.$   So,  $\beta_1^* \tilde\La$ extends up to  $\mr_s \times \ff$ as the joint  flow-out of $\beta_1^*\Sigma$ by $H_{q_R}$ and $H_{q_L}.$

  So the integral curves of $H_{q_L}$ can be continued smoothly up to $\mr_s\times L$ and
the  integral curves of $H_{q_R}$ can be continued smoothly up to $\mr_s\times R.$   Therefore  $\beta_1^* \tilde{\Lambda}$ can be extended up to the face  $\{\rho_R=0\}$ because $H_{q_R}$ is tangent to $\beta_1^* \tilde{\Lambda}$ and transversal to $\{\rho_R=0\}.$  The same holds for the left face.   This shows that $\beta_1^* \tilde\La,$ which is in principle is defined in the interior of $\mr_s\times \xo,$ extends to a $C^\infty$ manifold up to $\p (\mr_s\times \xo) $ which intersects $\mr_s\times L$ and $\mr_s\times R$ transversally. 

We can make this more precise if we work suitable local symplectic coordinates valid near a point on the fiber over the corner $\ff\cap L \cap R.$   
We know that $R,$ $L$ and $\ff$ intersect transversally.  So one can choose local coordinates $x=(x_1,x_2,x_3, x')$ in $\mr^{2n+2}$ valid near  $\ff\cap L \cap R$ such that
\begin{gather*}
\ff=\{x_3=0\}, \;  R=\{x_1=0\}  \text{ and } L=\{x_2=0\}.
\end{gather*}
and that the symplectic form $\omega^0=d\sigma\wedge ds + d\xi \wedge d x.$  For example, this can be accomplished by using local coordinates defined in \eqref{eqc2} and setting $u=x_3,$ $w=x_2$ and $w'=x_1,$ $(y',Z)=x'.$ 

We know that $\beta_0^*\tilde\La$ is a Lagrangian submanifold of $T^*(\mr_s \times \mr^{2n+2})$ contained in $\{x_1> 0, \; x_2>0, \; x_3\geq 0\},$
 which intersects $\ff=\{x_3=0\}$ transversally.  There are commuting Hamilton vector fields $H_{q_R}$ and $H_{q_L}$ tangent to $\beta_0^*\tilde\La$ that are $C^\infty$ up to $\{x_1=0\}\cup\{x_2=0\}\cup \{x_3=0\},$  and as long as $\sigma\not=0,$ 
$H_{q_R}$ transversal to $R$ and tangent to $L$ and $\ff$ and $H_{q_L}$ is transversal to $L$ and tangent to $R$ and $\ff.$    Also, since $q_R$ and 
$q_L$ do not depend on $s,$ $\sigma$ remains constant along the integral curves of $H_{q_R}$ and $H_{q_L}.$

Let
\begin{gather*}
\mcf=T_{\{x_1=x_2=0\}}^*(\mr_s \times \{x: x_1>0, x_2>0, x_3\geq 0\}),
\end{gather*}
and let  $p=(s,\sigma, 0,\xi_1,0,\xi_2,x_3,\xi_3, x', \xi')),$ $\sigma\not=0,$  denote a point on  $\mcf.$  Since $q_R$ and $q_L$ do not depend on $s,$  $\sigma$ remains constant along the integral curves of  $H_{q_R}$ and $H_{q_L}.$  Moreover, in the region
$\sigma\not=0,$ the vector fields $H_{q_R}$ and $H_{q_L}$ are smooth, non-degenerate up to the boundaries.  $H_{q_R}$ is tangent to $\ff$ and $L,$
while $H_{q_L}$ is tangent to $\ff$ and $R.$ So, for $\eps$ small enough we  define
\begin{gather*}
\Psi_0: [0,\eps)\times [0,\eps) \times (\mcf \cap \{\sigma\not=0\} ) \longrightarrow U_0 \subset T^*(\mr_s\times \{ x_1\geq 0, x_2\geq 0, x_3\geq 0\})\\
\Psi_0(t_1,t_2,p) = \exp(-t_1 H_{q_R})\circ  \exp(-t_2 H_{q_L})(p),
\end{gather*}
and
\begin{gather*}
\Psi_1: [0,\eps)\times [0,\eps) \times (\mcf\cup \{\sigma\not=0\}) \longrightarrow U_1 \subset T^*(\mr_s\times \{ x_1\geq 0, x_2\geq 0, x_3\geq 0\})\\
\Psi_1(t_1,t_2,p) = \exp(-t_1 \p_{x_1})\circ  \exp(-t_2 \p_{x_2})(p),
\end{gather*}
Since the vector fields $H_{q_R},$ $H_{q_L}$ commute and  $\p_{x_1}$ and $\p_{x_2}$ commute, both maps are $C^\infty$ map and moreover, 
\begin{gather*}
\Psi_0^* H_{q_R}=-\p_{t_1}, \;\  \Psi_0^* H_{q_L}=-\p_{t_2}  \\
\Psi_1^* \p_{x_1}=-\p_{t_1}, \;\ \Psi_1^* \p_{x_2}=-\p_{t_2}  \\
\end{gather*}
Hence,
\begin{gather*}
\Psi=\Psi_0\circ \Psi_1^{-1}: U_1 \longrightarrow U_0, \\
\Psi^* H_{q_R}=-\p_{x_1},  \;\  \Psi^* H_{q_L}=-\p_{x_2}.
\end{gather*}
Moreover,  if $\omega^0$  is the symplectic form on $T^*(\mr \times \xo),$ in coordinates \eqref{eqc2} valid near $\mcf,$
\begin{gather*}
\Psi^*\omega^0= \omega^0.
\end{gather*}

Now $\Upsilon=\Psi^{-1}(\beta_0^*\tilde\La)$ is a $C^\infty$ Lagrangian in $\{ x_1> 0, \; x_2>0, x_3\geq 0\}$ which intersects $\{x_3=0\}$ transversally, and
both $\p_{x_1}$ and $\p_{x_2}$ are tangent to $\Upsilon.$  But this implies that  for any point $p \in \Upsilon,$ the integral curves of
 $\p_{x_j},$ $j=1,2$ starting at a point $p\in \Upsilon$ are contained in $\Upsilon.$  Therefore, for any $p=(x_1,\xi_1, x_2,\xi_2, x_3,\xi_3,x', \xi')\in \Upsilon,$ with 
 $x_1$ and $x_2$ small enough, the set
$\{x_1-t_1,\xi_1, x_2-t_2,\xi_2, x_3,\xi_3, x',\xi'\} \subset \Upsilon.$  By taking $t_1$ and $t_2$ large enough, this gives an extension $\overline{\Upsilon}$ of $\Upsilon$ to $\{x_1\leq 0\}\cup \{x_2\leq 0\}.$ Now $\Psi(\overline{\Upsilon})$ is the desired Lagrangian extension of $\beta_0^*\tilde\Lambda.$  Notice that in fact, it extends past the boundaries $\{x_1=0\}$ and $\{x_2=0\}.$  The construction in the other regions, away from the  co-dimension three corners follows by the same argument.

We still need  to verify that $\La^* \cap T_{\{\rho_\bullet=0\}}^*( \xo)$  is a $\CI$ Lagrangian submanifold of $T^*\{\rho_\bullet=0\}.$   To see that, observe that we
have constructed  local symplectic coordinates $(x,\xi)$ near a point $p\in  \La^*\cap \{\rho_R=\rho_L=0\}$ such that $R=\{x_1=0\}$ and $L=\{x_2=0\}$ and  $\Psi^*H_{q_R}=H_{\xi_1}=\p_{x_1}$ and $\Psi^*H_{q_L}=H_{\xi_2}=\p_{x_2}.$ Therefore $\Psi^*q_R= \xi_1+C_1$ and $\Psi^*q_L=\xi_2+C_2.$   But since $q_R(p)=q_L(p)=0,$ it follows that $\xi_1(p)=\xi_2(p)=0,$  and so  $C_1=C_2=0.$   So $\xi_1=\xi_2=0$ on $\Upsilon.$ But  $\Upsilon$ is foliated by submanifolds
\begin{gather*}
\Upsilon_{a}= \Upsilon \cap \{x_1=a\}, \;\  \Upsilon^{a}= \Upsilon \cap \{x_2=a\}, \;\ 
\end{gather*}
which are Lagrangian submanifolds of $T^*\{x_j=a\},$ $j=1,2$ because $\xi_j=0$ on $\Upsilon.$  In particular this shows that $\Upsilon_0= \La^* \cap \{\rho_R=0\}\subset T^*\{\rho_R=0\}$ and $\beta^*\wtla \cap \{\rho_L=0\}\subset T^*\{\rho_L=0\}$  are Lagrangian submanifolds. The same argument shows that and 
$\La^*\cap \{\rho_R=\rho_L=0\}\subset T^*\{\rho_R=\rho_L=0\}$ is a Lagrangian submanifold. 
\end{proof}

As in the notation of Theorem \ref{soj}, we shall  denote
\begin{gather}
\begin{gathered}
\wtla_\pm^f \text{ to be the extension of } \beta_{1f}^* \La_{\pm}  \text{ up to } \p T^*(\mr\times \xo).
\end{gathered}\label{deflastar}
\end{gather}

We know from Theorem \ref{soj} that
\begin{gather*}
\p_L \wtla_{\pm}^\bullet = \wtla_\pm^\bullet \cap \{\rho_L=0\} \text{ and } \p_R \La_\pm^\bullet = \wtla_\pm^\bullet \cap \{\rho_R=0\}, \;\ \bullet=f,b,
\end{gather*}
are $C^\infty$ closed Lagrangian submanifolds of  $T^* (\mr_s\times (\p X \times_0 X) )\setminus 0$ and $T^* (\mr_s\times ( X \times_0 \p X) )\setminus 0$ respectively,  in the sense that they can be extended to a $C^\infty$ manifold across the boundary of $T^* (\mr_s\times (\p X \times_0 X))$ or $T^* (\mr_s\times ( X \times_0 \p X)).$ 

 We also define
\begin{gather}
\begin{gathered}
\La_{\p\pm}^{\bullet}= \wtla_\pm^\bullet \cap \{\rho_R=\rho_L=0\},
\end{gathered}\label{bdryla}
\end{gather}
and these are Lagrangian submanifolds of $T^*(\mr\times \p X\times_0 \p X).$   Away from the front face of $\mr\times \p X \times_0 \p X,$ each one of the manifolds $\La_{\p\pm}^f$ defines a canonical relation on $\mr \times \p X \times \p X \times \mr$ as the set
\begin{gather}
\begin{gathered}
\{(s,\sigma, y,\eta); (y',\eta', s', \sigma): \text{ such that  there exists an integral curve of }  H_{q_{fR}} \text{ joining } \\  (y',\eta',s',\sigma) \text{ and } (z, -\zeta, s=\log x(z), \sigma) \text{ and }
\text{ an integral curve of }  H_{q_{fL}} \\ \text{ joining } (z, -\zeta, s=\log x(z), \sigma) \text{ and }  (s,\sigma,y,\eta), \;\ \mp \sigma>0  \},
\end{gathered}\label{relation}
\end{gather}
and therefore we define $\La_\p^f=\La_{\p+}^f\cup \La_{\p-}^f$ to be the scattering relation of a non-trapping AHM $(\intx,g).$  As mentioned in the introduction, we could have defined $\La_{\p}^f\cap\{\sigma=-1\}$ to be the scattering relation because  $\La_\p^f$ is foliated by $\La_\p^f \cap\{ \sigma=\text{constant}\},$ and according to \eqref{relation} the leaf with $\sigma=-1$ corresponds to the bicharacteristics that project onto  unit speed geodesics. The other leaves  of \eqref{relation} are  associated with reparametrized integral curves of the Hamilton vector fields.

\section{Proof of Theorem  \ref{scat-FIO}}

We shall use H\"ormander's notation: $v \in I^m(Y,\La, \Omega_Y^\ha)$ denotes a half-density valued Lagrangian distribution $v$ on the manifold $Y$ of order $m$ with respect to the Lagrangian $\La\subset T^* Y\setminus 0,$ where $Y$ is a $C^\infty$ manifold.

We assume that $(\intx,g)$ is a non-trapping AHM. Let $\mcw= A^{-1}\sin(tA),$ $A=\sqrt{\Delta_g-\nsq},$ and let $\mcw=\mcw_++\mcw_-,$ with $\mcw_\pm$ defined at the beginning of Section \ref{ULM}.

The following result gives a uniform description of $\beta_{1f}^* (xx')^{-\novt}\mck_{\mcw(t)}$ up to the boundary faces of $\xo,$ and therefore gives a thorough  description of the limit \eqref{kerscat-0}.  

\begin{prop}\label{radfs-th} Let  $K_{\mcw_\pm }\in C^{-\infty}(\mr_t \times  \intx \times \intx)$  denote the Schwartz  kernels of $\mcw_\pm.$  Then $\beta_{1f}^*((xx')^{-\novt}K_{\mcw_\pm})$ has an extension up to the boundary of $\mr \times \xo$ such that
\begin{gather}
\begin{gathered}
\beta_{1f}^*( (xx')^{-\novt}  K_{\mcw_\pm} ) \in I^{-\fq}(\mr_s \times \xo; \wtla_{\pm}^f, \Omega_{\mr_s \times \xo}^\ha), \\
\beta_{1f}^*( (xx')^{-\novt}  K_{\mcw_\pm })|_{\{\rho_L=0\}} \in  I^{-1}(\mr_s \times \p \xo; \p_L\wtla_{\pm}^f, \Omega_{\mr_s \times \p \xo}^\ha), \\
\beta_{1f}^*( (xx')^{-\novt}  K_{\mcw_\pm })|_{\{\rho_R=0\}} \in  I^{-1}(\mr_s \times X\times_0 \p X; \p_R \wtla_\pm^f, \Omega_{\mr_s \times X\times_0 \p X}^\ha), \\
\beta_{1f}^*( (xx')^{-\novt}  K_{\mcw_\pm })|_{\{\rho_R=\rho_L=0\}} \in  I^{-\tq}(\mr_s \times \p X\times_0 \p X;  \La_{\p\pm}^f, \Omega_{\mr_s \times \p X\times_0 \p X}^\ha).
\end{gathered}\label{order-Lag}
\end{gather}
\end{prop}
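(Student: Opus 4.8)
The plan is to start from the well-known microlocal description of the half-wave propagators $\mcw_\pm$ on the interior of $\mr_t \times \intx \times \intx$, namely $K_{\mcw_\pm} \in I^{-\fq}(\mr_t \times \intx \times \intx; \La_\pm, \Omega^\ha)$, and then to transport this statement to the blown-up space $\mr_s \times \xo$ by the change of variables $\beta_{1f}$. The key point, already contained in Theorem \ref{soj} and Proposition \ref{vfields}, is that $\beta_{1f}^*\La_\pm$ extends to a $C^\infty$ closed Lagrangian $\wtla_\pm^f$ up to the boundary of $T^*(\mr_s \times \xo)$, meeting $L$, $R$ and $\ff$ transversally, and that the conjugated wave operators $q_{fL}, q_{fR}$ extend smoothly with Hamilton vector fields transversal to the respective faces away from $\sigma = 0$. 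So the real content is: (i) track how the Lagrangian-distribution order and the half-density bundle transform under $\beta_{1f}$ and under multiplication by $(xx')^{-\novt}$; (ii) show the resulting distribution extends smoothly (as a conormal/Lagrangian distribution) up to each boundary face; and (iii) compute the order on each restricted face.

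First I would set up the order bookkeeping in the interior. Since $\beta_0$ (hence $\beta_{1f}$) is a diffeomorphism in the interior, $\beta_{1f}^* K_{\mcw_\pm}$ is a Lagrangian distribution of the same order $-\fq$ with respect to $\beta_{1f}^*\La_\pm$, once we keep careful track of the half-density factor: pulling back a half-density under a diffeomorphism is again a half-density, and the Jacobian of $\beta_{1f}$ is a positive smooth multiple of a power of the face defining functions $\rho_L, \rho_R, \rho_\ff$. The weight $(xx')^{-\novt}$ is, on $\xo$, a smooth positive multiple of $(\rho_L \rho_R)^{-\novt}$ (using $x = \rho_L \rho_\ff (\text{smooth} > 0)$ near the corner, and similarly for $x'$), so $\beta_{1f}^*((xx')^{-\novt} K_{\mcw_\pm})$ equals $(\rho_L\rho_R)^{-\novt}$ times a Lagrangian half-density on $\mr_s\times\xo$ of order $-\fq$. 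I would then absorb the singular prefactor $(\rho_L\rho_R)^{-\novt}$ into the half-density: writing the reference half-density on $\mr_s\times\xo$ built from $ds\,\frac{dX}{X}\frac{dx'}{x'}\dots$ (the $0$-density), the extra power of the boundary defining functions is exactly what converts the ``wrong'' Euclidean-type half-density coming from $\dvol_g$ into the natural half-density on $\mr_s\times\xo$ in which $\wtla_\pm^f$ is non-characteristic up to the faces. This is the step where the precise exponent $\novt$ must be matched; I expect this normalization computation — and confirming that after it the symbol is a genuine classical symbol smooth up to the corner, rather than merely conormal with a weight — to be the main obstacle.

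Next, to prove the extension and compute the boundary orders I would use the propagation/iterative-regularity argument built on Proposition \ref{vfields}. On a neighborhood of a point of $L$ (away from $R, \ff$) one has $q_{fL}$ smooth with $H_{q_{fL}}$ transversal to $L = \{\rho_L = 0\}$ and $\sigma\neq 0$; since $\beta_{1f}^*((xx')^{-\novt}K_{\mcw_\pm})$ solves, microlocally on $\wtla_\pm^f$, a transport equation along $H_{q_{fL}}$ with smooth coefficients, and the initial surface $\beta_{1f}^*\Sigma$ is already smooth up to $\ff$, the standard argument (as in H\"ormander, or Theorem 5.1.2 of \cite{Duistermaat}, and exactly the argument used for the interior construction) shows the Lagrangian distribution extends smoothly across $L$, hence is a Lagrangian distribution on $\mr_s\times\xo$ up to $L$; likewise up to $R$ and, using tangency of $H_{q_{fL}}, H_{q_{fR}}$ to $\ff$, up to $\ff$. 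Restriction to a boundary face $\{\rho_\bullet = 0\}$ along which $\wtla_\pm^f$ meets transversally then lands in a Lagrangian distribution on the face relative to $\La_{\p\pm}^f$ (resp.\ $\p_L\wtla_\pm^f$, $\p_R\wtla_\pm^f$), and the order shifts by the usual $+\oq$ for a restriction to a hypersurface transversal to the Lagrangian: from $-\fq$ to $-1$ for a single face ($\rho_L = 0$ or $\rho_R = 0$), and then from $-1$ to $-\tq$ for the further restriction to the corner $\{\rho_L = \rho_R = 0\}$. (Equivalently, one tracks the drop in the number of phase variables / the change of $\dim$ in the formula $m + \tfrac{\dim Y}{4} - \tfrac{N}{2}$ under each restriction; either way the arithmetic gives $-\fq \to -1 \to -\tq$.) I would spell out the transversality needed for these restrictions — it is exactly the ``$H_{q_{fR}}$ transversal to $R$, $H_{q_{fL}}$ transversal to $L$ for $\sigma\neq 0$'' statement of Proposition \ref{vfields} — and note that the hypotheses of the trace theorem for Lagrangian distributions are met precisely because $\wtla_\pm^f$ intersects the faces transversally, which is the content of Theorem \ref{soj}. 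Finally, I would remark that the two signs $\pm$ are handled identically since they correspond to the two components $\mcn_{Q_\bullet,\pm}$ of the characteristic variety and the whole construction is symmetric under $\sigma\mapsto-\sigma$.
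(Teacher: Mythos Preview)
Your plan is essentially the paper's: start from the interior description $K_{\mcw_\pm}\in I^{-\fq}$, use Theorem \ref{soj}/Proposition \ref{vfields} to extend the Lagrangian and the Hamilton vector fields smoothly up to the boundary, solve transport equations along $H_{q_{fL}}$ (resp.\ $H_{q_{fR}}$) to extend the symbol across $L$ (resp.\ $R$), and then restrict to the faces using transversality, with the order shifts $-\fq\to-1\to-\tq$ coming from the drop in dimension while the number of phase variables $N$ stays fixed.

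The one place where you diverge is your treatment of the weight $(xx')^{-\novt}$. You propose to absorb it into the half-density bundle and flag this as ``the main obstacle.'' The paper avoids this entirely: it conjugates the wave operator, computing $x^{-\novt}(D_t^2-\Delta_g+\nsq)x^{\novt}$ explicitly in the product coordinates \eqref{prod}, and then checks that after pulling back by $\beta_{1f}$ this equals $\rho_L$ times a smooth differential operator $Q_{fL}$ whose principal symbol is exactly $q_{fL}$. Since $(xx')^{-\novt}K_{\mcw_\pm}$ is annihilated by the conjugated operator and $H_{q_{fL}}$ is transversal to $L$ for $\sigma\neq 0$, the transport equations for the full symbol of $\beta_{1f}^*((xx')^{-\novt}K_{\mcw_\pm})$ are non-degenerate first-order ODEs with coefficients smooth up to $\rho_L=0$, and one integrates them directly. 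No separate density-matching step is needed; the conjugation builds the weight into the equation from the start, and the smoothness of the coefficients you assert in your transport-equation step is precisely what this conjugation computation supplies. (Incidentally, your factorization $(xx')^{-\novt}\sim(\rho_L\rho_R)^{-\novt}$ drops a factor of $\rho_{\ff}^{-n}$ near the front face, since $x=\rho_L\rho_{\ff}$ and $x'=\rho_R\rho_{\ff}$ there; this would matter if you pursued the absorption route.) Once you replace the absorption paragraph by the explicit conjugation computation, your argument coincides with the paper's.
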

\begin{proof} 
Since $\beta_1$ is a diffeomorphism in the interior of $\mr_s\times \xo,$   it follows that, away from $\diag_0,$ 
 \begin{gather}
 \begin{gathered}
\beta_{1f}^* ({(xx')}^{-\novt} \mck_{\mcw_+})  \in I^{-\fq}(\mr_s\times \xo, \beta_{1f}^*\La_+, \Omega_{\mr \times \xo}^\ha), \\
 \beta_{1f}^* ({(xx')}^{-\novt} \mck_{\mcw_-} ) \in I^{-\fq}(\mr_s\times \xo, \beta_{1f}^*\La_-, \Omega_{\mr \times \xo}^\ha),  
 \end{gathered}\label{lift-lag-int}
 \end{gather}
  and one would like to extend this regularity  up to the left boundary of $\mr_s\times \xo.$

  We have shown in Theorem \ref{soj} that   $\beta_{1f}^*\wtla_\pm$  can be extended smoothly up across the boundary and intersects the boundary transversally.  So one expects that  $\beta_{1f}^* ({(xx')}^{-\novt}\mck_{\mcw_\pm})$  can be extended across the boundary of $\mr_s\times \xo,$ and to do this one needs to analyze the behavior of the symbol of these distributions up to the boundary. We work with $\beta_{1f}^*({(xx')}^{-\novt}\mck_{\mcw_+}),$ the other case is identical, and we prove the result on each face separately. 

We work with the operator $x^{\novt} \square_L x^{-\novt}.$  The transport equation for the  principal symbol  of $(xx')^{-\novt} \mck_{\mcw_+}$ is given by
\begin{gather}
(\mcl_{H_{Q_L}}+c) a=0,  \text{ on } \La_+ \label{transp1}
\end{gather} 
 where  $\mcl_{H_{Q_L}}$ is the Lie derivative with respective to the vector field $H_{Q_L},$ and $c$ is subprincipal symbol of  $x^{-\novt} \square_L x^\novt $   Since the manifold is non-trapping, the symbol is well defined in $T^*(\mr \times \intx \times \intx).$   In the interior of $T^*(\mr \times \xo),$ the map $\beta_0$ is a diffeomorphism and  in view of \eqref{shift} the manifold $\beta_{1f}^*\La_+$ is obtained from $\beta_0^*\La_+$ by the map $(t,\tau, m,\nu)\longmapsto (s,\sigma, \nu+d\gamma).$ Again, in the interior this is a $C^\infty$ symplectic change of variables, and therefore  \eqref{transp1} becomes
 \begin{gather}
 (\mcl_{H_{\beta_{1f}^*Q_{L}}}+ \beta_{1f}^*c) \beta_{1f}^* a=0 \text{ on } \beta_{1f}^* \La_+, \text{ in the interior of } T^*(\mr \times \xo). \label{transp2}
 \end{gather}
We want to show this equation can be solved up to the boundary.  But near a point $\alpha \in \beta_{1f}^* \La_+\cap \{\rho_L=0\},$ the manifold $\beta_{1f}^*\La_+$ can be parametrized by a $C^\infty$ phase function $\Phi(m,\theta),$ and therefore  $ \beta_{1f}^*( {x}^{-\novt} \mck_{\mcw_+})$ is  microlocally given by an oscillatory
 \begin{gather}
 \beta_{1f}^*( {x}^{-\novt} \mck_{\mcw_+})= \int_{\mr^N} e^{i \Phi(m,\theta) } a(m,\theta) \; d\theta, \label{osc}
 \end{gather}
 where 
 \begin{gather*}
 a(m,\theta)\sim a_0(m,\theta)+ a_1(m,\theta) +..., \; a_j(m,\theta) \in S^{-\fq + \frac{(2n+2-2N)}{4} -j}, \; j=0,1,2,\ldots
 \end{gather*}
  is a $C^\infty$ symbol in $\rho_L>0.$  The term $2n+2$ is the dimension of $\xo.$ We will show that $a$ extends smoothly up to $\{\rho_L=0\}.$  We will do the computation in the region near $L$ and away from $R\cup \ff,$  and the other cases are left to the reader. The computations are  very similar to the ones done in the proof of Theorem \ref{vfields}.
  
  As in the proof of Proposition  \ref{vfields}, in the region near $L$ are away from $R \cup \ff,$ we can just use $x$ such that \eqref{prod} is defined as the defining function of $L$ and set $s=t+\log x.$  In these coordinates the Laplacian is given by
\begin{gather*}
\Delta_g= (xD_x)^2- in xD_x -ix^2A D_x + x^2 \Delta_h,
\end{gather*}
and therefore
\begin{gather}
x^{-\novt} (D_t^2-\Delta_g+\nsq) x^{\novt}= D_t^2- (xD_x)^2 -x^2\Delta_h+ ix^2A D_x -\novt xA.\label{conjwe}
\end{gather}
and therefore
\begin{gather*}
\beta_{1f}^*( x^{\novt} (D_t^2-\Delta_g+\nsq) x^{-\novt})=x Q_{fL}(x,y,D_x,D_y), \text{ where } \\
Q_{fL}= x D_x^2+ 2D_x D_s+ x \Delta_h -i(1- xA) D_x + iA D_s- \novt A.
 \end{gather*}

 If we apply $Q_{fL}(x,y,D_x,D_y)$ to  \eqref{osc} we find that
 \begin{gather*}
 Q_{fL} \int_{\mr^N} e^{i \Phi(m,\theta) } a(m,\theta) \; d\theta= \int_{\mr^N} e^{i \Phi(m,\theta) } b(m,\theta) \; d\theta, \text{ where } \\
 b(m,\theta)\sim \sum_{j=0}^\infty b_j(m,\theta), \;\  b_0(m,\theta)= (H_{q_{fL}} + Q_{fL}\Phi) a_0, \;\ b_j=  (H_{q_{fL}} + Q_{fL}) a_j+ Q_{fL} a_{j-1},
 \end{gather*}
 and since $H_{fL}$ is transversal to $L,$ we obtain  a sequence of $C^\infty$ non-degenerate transport equations that can be solved up to $\{x=0\},$ namely
 \begin{gather*}
 (H_{q_{fL}} + Q_{fL}\Phi) a_0=0, \;\ a_0 \in C^\infty \text{ for } x>\eps, \\
 (H_{q_{fL}} + Q_{fL}\Phi) a_j+ Q_{fL} a_{j-1}=0, \;  a_j \in C^\infty \text{ for } x>\eps.
 \end{gather*}
 
  So we conclude that away from $\diag_0,$
  \begin{gather}
  \begin{gathered}
\beta_{1f}^* ({(xx')}^{-\novt} \mck_{\mcw_+})  \in I^{-\fq}(\mr_s\times \xo,\wtla_+^f,\Omega_{\mr\times \xo}^\ha), \\
 \beta_{1f}^*({(xx')}^{-\novt} \mck_{ \mcw_-})  \in I^{-\fq}(\mr_s\times \xo, \wtla_-^f,\Omega_{\mr\times \xo}^\ha). 
\end{gathered}\label{lift-lag-clos}
 \end{gather}
 
 Next we consider the projections to $\p\xo,$ $X\times_0 X$ and $\p X \times_0 \p X.$   Since,  as explained above,  $\La_{\pm}^f$ intersects  $\{\rho_\bullet=0\},$ $\bullet=R,L,$  transversally and  at $\{\xi_\bullet=0\},$ where $\xi_\bullet$ is the dual to $\rho_\bullet,$  if a phase $\Phi(m,\theta)$ parametrizes $\La_{\pm}^f$ near a point 
 $\alpha\in T_{\p\xo}^*\xo,$ then $\Phi|_{\{\rho_L=0\}}$  parametrizes $\p_L \La_{\pm}^f.$ Similarly, if $\Phi(m,\theta)$ parametrizes $\La_{\pm}^f$ near a point 
 $\alpha\in T_{X\times _0 \p X}^*\xo,$  then $\Phi|_{\{\rho_R=0\}}$  parametrizes $\p_R \La_{\pm}^f,$  and if  $\Phi(m,\theta)$ parametrizes $\La_{\pm}^f$ near the corner a point on the fiber over the corner  $\{\rho_R=\rho_L=0\},$ then 
 $\Phi|_{\{\rho_R=\rho_L=0\}}$  parametrizes $\La_{\pm,\p}^f.$  This just follows from the definition of parametrization of a Lagrangian by a phase function, but it can be found in 
 Proposition 4.1.7 of \cite{Hormander-FIO}.   Here one strongly needs that $\La_{\pm}^f$ intersects  $\{\rho_\bullet=0\},$ $\bullet=R,L,$  transversally and  at $\{\xi_\bullet=0\},$ where 
 $\xi_\bullet$ is the dual to $\rho_\bullet.$
 
 Therefore, if  
 \begin{gather*}
 \beta_{1f}^*((xx')^{-\novt}\mcw_+)= \int_{\mr^N} e^{i \Phi(m,\theta)} a(m,\theta) \; d\theta \text{ then } \\
  \beta_{1f}^*((xx')^{-\novt}\mcw_+)|_{\{\rho_\bullet=0\}}= \int_{\mr^N} e^{i \Phi_\bullet(m,\theta)} a_\bullet(m,\theta) \; d\theta,\\ \text{ where } \Phi_\bullet(m,\theta)= \Phi(m,\theta)|_{\{\rho_\bullet=0\}},\;\ a_\bullet(m,\theta)= a(m,\theta)|_{\{\rho_\bullet=0\}}, \;\ \bullet=R,L, \\
\beta_{1f}^*((xx')^{-\novt}\mcw)|_{\{\rho_R=\rho_L=0\}}= \int_{\mr^N} e^{i \Phi_\p(m,\theta)} a_\p(m,\theta) \; d\theta, \text{ and where } \\
\Phi_\p(m,\theta)= \Phi(m,\theta)|_{\{\rho_R=\rho_L=0\}},\;\ a_\p(m,\theta)= a(m,\theta)|_{\{\rho_R=\rho_L=0\}}.
\end{gather*}
  
 As for the order of the operators, one sees that the number of variables $\theta$ stays the same, while the dimension drops by one for the projection to $\{\rho_\bullet=0\},$ 
 $\bullet=R,L$ and by two for the projection to $\{\rho_R=\rho_L=0\}.$  Since $a\in S^{-\fq+ \frac{2n+2-N}{4}}= S^{-1+ \frac{2n+1-N}{4}}= S^{-\tq+ \frac{2n-N}{4}}.$ This proves \eqref{order-Lag} and concludes the proof of Proposition \ref{radfs-th}.
 \end{proof}

Now we are in a position to conclude the proof of Theorem  \ref{scat-FIO}.
\begin{proof}  The first step is to interpret \eqref{kerscat-0} in terms of the blow-ups defined above.   We claim that, the case of non-trapping AHM manifolds,  the limit \eqref{kerscat-0} holds in a stronger sense than in \eqref{sep-limits}.  In fact, we have 

\begin{gather}
\begin{gathered}
\lim_{x\rightarrow 0} \lim_{x'\rightarrow 0} {(xx')}^{-\novt}  E_+(s-s'-\log x-\log x', z, z')= \\ \beta_{\p*} \left( \beta_{0}^* ((xx')^{-\novt} E_+)(s-s'-\log \rho_L-\log \rho_R-2\log \rho_{\ff}, m)|_{\{\rho_R=\rho_L=0\}}\right). 
\end{gathered}\label{lift-limit}
\end{gather}
In other words, the kernel of $(xx')^{-\novt}E_+$ is pulled back by  $\beta_{1f}$, projected to $\{\rho_R=\rho_L=0\}$ and then pushed forward to 
$\mr \times \p X \times \p X \times \mr$ by $\beta_\p.$ Since one is taking the limit in $x'$ first and then in $x,$ and since $x=\rho_{\ff}\rho_L,$ $x'=\rho_{\ff}\rho_R,$ it follows that $\rho_{\ff}\not=0.$ So one is in fact restricting to the right face, and then  to the left face, which is exactly \eqref{lift-limit}. Then it follows from \eqref{kerscat-0} and \eqref{lift-limit} that if $\mck_{\mcs}$ denotes the kernel of the scattering matrix, then
\begin{gather}
\beta_\p^*(\mck_{\mcs})(s,\tilde m, s')=  \ha \beta_{0}^*((xx')^{-\novt} E_+)(s-s'-\log \rho_L-\log \rho_R-2\log \rho_{\ff}, m)|_{\{\rho_R=\rho_L=0\}}. \label{scat-JS0}
\end{gather}

If we denote 
\begin{gather*}
\beta_{0}^* ((xx')^{-\novt} E_+)(s-\log \rho_L-\log \rho_R, m)|_{\{\rho_R=\rho_L=0\}}=\mca(s,\tilde m),
\end{gather*}
then it follows from Proposition \ref{radfs-th} that  $\mca$ satisfies \eqref{kerscatmat1}, and it follows from \eqref{scat-JS0} that
\begin{gather*}
\beta_\p^*\mck_{\mcs}(s,\tilde m, s')= \mca(s-s'-2\log \rho_{\ff_0},\tilde m).
\end{gather*}

This ends the proof of Theorem \ref{scat-FIO}.
\end{proof}

The analogue of this formula in terms of the scattering matrix and the resolvent was established in \cite{JS1}, see also \cite{GZ,GrZw}, and it shows that the lift of the Schwartz kernel of $A(\la)$ by $\beta_\p$ can be obtained from the Schwartz kernel of the (forward) resolvent  by 
\begin{gather}
\beta_\p^* A(\la)= 2i\la \beta_0^*\left((xx')^{-\novt-i\la} R_+(\la)\right)|_{\{\rho_R=\rho_L=0\}}. \label{scatmat-JS}
\end{gather}
But as we know from \eqref{res-fw} that $\mcr_+(\la,z,z')=\widehat{E_+}(\la,z,z'),$ and thus \eqref{scatmat-JS} is in some sense the Fourier transform of \eqref{kerscat-0}.
 We know from Lemma \ref{conv-FT} that we can commute the Fourier transform and the projection to either $\{\rho_R=0\}$ or $\{\rho_L=0\},$ but we cannot  show directly that the Fourier transform commutes with the second restriction.

Finally we remark that Proposition \ref{radfs-th} can be used to say more about the microlocal structure of the Schwartz kernel of the radiation fields.  Recall that
\begin{gather*}
\mcr_+(f_1,f_2)= \lim_{x\rightarrow 0}  x^{-\novt} (\p_t \one_+u)(s-\log x, x,y),
\end{gather*}
where $u(t,z)$ is a solution of \eqref{CPWE}.   Then $\mck_{\mcr_+},$ the Schwartz kernel of $\mcr_+$ is an element of $C^{-\infty}(\mr\times \p X \times \intx)$ and as above we want to analyze the microlocal structure of $\beta_{L}^*\mck_{\mcr_+}.$   In view of what was said above, it is enough to analyze the behavior of $\beta_0^*\mck_{\mcw}(s-\log x-\log \rho_R, m)|_{\{\rho_L=0\}}.$ So the following is a direct consequence of Proposition \ref{radfs-th}:
\begin{theorem}\label{radfs-th-N} Let $(\intx,g)$ be a non-trapping AHM.  Let $\mck_{\mcr_+}$ denote the Schwartz kernel of the forward radiation field.   Let $\beta_{1L}$ be the projection $\beta_1|_{\{\rho_L=0\}}.$ Then $\beta_{1L}^* \mck_{\mcr_+}= \mcd(s-\log \rho_{\ff_L},m)+\p_s \mcd(s-\log \rho_{\ff_L},m),$ where  $\rho_{\ff_L }$ is a boundary defining function of the front face of $\p\xo$ and 
\begin{gather*}
\mcd \in  I^{0}(\mr_s \times \p \xo; \p_L\wtla_+^f, \Omega_{\mr_s \times \p \xo}^\ha)+  I^{0}(\mr_s \times \p \xo; \p_L\wtla_-^f, \Omega_{\mr_s \times \p \xo}^\ha).
\end{gather*}
\end{theorem}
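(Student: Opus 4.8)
The plan is to read off Theorem \ref{radfs-th-N} from Proposition \ref{radfs-th}, after re-expressing $\mck_{\mcr_+}$ through the Schwartz kernel of $\mcw=A^{-1}\sin(tA)$ and its $s$-derivatives. So the theorem really contains no new microlocal work beyond Theorem \ref{soj} and Proposition \ref{radfs-th}; the content is the bookkeeping that links $\mcr_+$ to those.

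First I would write $\mck_{\mcr_+}$ in terms of $\mce_+$. By \eqref{newker1}, if $u$ solves \eqref{CPWE} with data $(f_1,f_2)$ then
\[
\mcr_+(f_1,f_2)(s,y)=-\p_s\int_X\big(\mce_+(s,y,z')\,f_2(z')+\p_s\mce_+(s,y,z')\,f_1(z')\big)\dvol_{g(z')},
\]
with $\mce_+$ as in \eqref{defepm}. Since $E_+=\one_+(t)\mcz$ with $\mcz=K_{\mcw}$ and $\mcw=\mcw_++\mcw_-$, and since the cutoff $\one_+$ is inactive on the range $t=s-\log x\to+\infty$ relevant here, the kernel $\mck_{\mcr_+}$ is built out of $\p_s\mce_+$ (the contribution of $f_2$) and $\p_s^2\mce_+=\p_s(\p_s\mce_+)$ (the contribution of $f_1$). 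Writing $\mce_+=\mce_+^{(+)}+\mce_+^{(-)}$ according to $\mcw=\mcw_++\mcw_-$, it is therefore enough to describe $\p_s\mce_+^{(\pm)}$ uniformly up to the boundary of $\mr_s\times\p\xo$.

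Next I would identify $\mce_+^{(\pm)}$, lifted to $\p X\times_0 X$ by $\beta_{0L}$ and suitably shifted in $s$, with the restriction treated in Proposition \ref{radfs-th}. Writing the left-factor coordinate as $x=\rho_{\ff}\rho_L$ and the right one as $x'=\rho_{\ff}\rho_R$, and comparing the shift $s=t+\log x$ in the definition of $\mce_+$ with the shift built into $\beta_{1f}$, one verifies --- this is exactly what the sentence preceding the theorem, about controlling $\beta_0^*K_{\mcw}(s-\log x-\log\rho_R,m)|_{\{\rho_L=0\}}$, carries out --- that $\beta_{0L}^*\mce_+^{(\pm)}$ equals $\beta_{1f}^*\big((xx')^{-\novt}K_{\mcw_\pm}\big)|_{\{\rho_L=0\}}$ up to multiplication by a smooth positive factor (a power of $\rho_{\ff_L}\rho_R$, coming from the difference between the normalisations $x^{-\novt}$ and $(xx')^{-\novt}$) and a reparametrisation of $s$; the part of that reparametrisation that does not extend smoothly to the front face is precisely the shift $s\mapsto s-\log\rho_{\ff_L}$. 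Since multiplication by a smooth positive factor and smooth translation in $s$ preserve the Lagrangian class and leave $\p_L\wtla_\pm^f$ unchanged, the second line of Proposition \ref{radfs-th}, $\beta_{1f}^*\big((xx')^{-\novt}K_{\mcw_\pm}\big)|_{\{\rho_L=0\}}\in I^{-1}(\mr_s\times\p\xo;\p_L\wtla_\pm^f,\Omega_{\mr_s\times\p\xo}^\ha)$, then gives $\beta_{0L}^*\mce_+^{(\pm)}(s-\log\rho_{\ff_L},\cdot)\in I^{-1}(\mr_s\times\p\xo;\p_L\wtla_\pm^f,\Omega_{\mr_s\times\p\xo}^\ha)$. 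Applying the first-order operator $\p_s$ raises the Lagrangian order by one, so, summing over $\pm$ and setting $\mcd=-\p_s\beta_{0L}^*\mce_+$, we obtain $\mcd\in I^{0}(\mr_s\times\p\xo;\p_L\wtla_+^f,\Omega_{\mr_s\times\p\xo}^\ha)+I^{0}(\mr_s\times\p\xo;\p_L\wtla_-^f,\Omega_{\mr_s\times\p\xo}^\ha)$; the $f_2$-part of $\beta_{1L}^*\mck_{\mcr_+}$ is then $\mcd(s-\log\rho_{\ff_L},m)$ and the $f_1$-part is $\p_s\mcd(s-\log\rho_{\ff_L},m)$, which is the asserted identity.

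The step I expect to be the main obstacle is the identification in the previous paragraph: showing that taking the limit $x\to0$ defining $\mce_+$ and then lifting in $z'$ produces, with no loss of Lagrangian regularity, the restriction to $\{\rho_L=0\}$ of the distribution of Proposition \ref{radfs-th}, up to the smooth normalising factor and the shift $-\log\rho_{\ff_L}$. This comes down to the transversality of $\wtla_\pm^f$ to $\{\rho_L=0\}$ and its smooth extension across that face, i.e.\ to Theorem \ref{soj}, which is already incorporated in the proof of Proposition \ref{radfs-th}; once the bookkeeping of the powers of $x,x'$ and of the reparametrisation of $s$ is in place, the theorem follows as a direct consequence of Proposition \ref{radfs-th}. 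The remaining verifications --- that $\p_s$ takes $I^{-1}$ to $I^{0}$, and that the smooth part of the reparametrisation does not move $\p_L\wtla_\pm^f$ --- are routine.
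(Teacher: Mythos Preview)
Your proposal is correct and follows the same route as the paper: the paper states the theorem as ``a direct consequence of Proposition \ref{radfs-th}'' after observing that one only needs to analyze $\beta_0^*\mck_{\mcw}(s-\log x-\log\rho_R,m)|_{\{\rho_L=0\}}$, and you carry out precisely this reduction with more of the bookkeeping spelled out (the link to $\mce_+$ via \eqref{newker1}, the splitting $\mcw=\mcw_++\mcw_-$, and the order count $I^{-1}\to I^0$ under $\p_s$). One small caution: the factor you call ``a smooth positive power of $\rho_{\ff_L}\rho_R$'' is literally $x'^{-\novt}=(\rho_{\ff}\rho_R)^{-\novt}$, which is not smooth as a function on $\p X\times_0 X$; it is harmless only because the statements in Proposition \ref{radfs-th} are for half-densities, and this factor is exactly what converts between the Riemannian half-density trivialisations on $X\times X$ and on $\xo$, so make sure you phrase that step in the half-density normalisation rather than as multiplication by a smooth function.
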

S\'a Barreto and Wunsch \cite{SW} proved that $\mck_{\mcr_+}$ is a Lagrangian distribution in $\mcr\times \p X \times \intx,$ which is in essence Theorem \ref{radfs-th-N} in the region away from the front face of $\p X \times_0 X$ and the lift of $\p X \times \p X.$ Theorem \ref{radfs-th-N} gives a uniform version of the result of \cite{SW} up to the front face of  $\p X \times_0 X$ and the lift of $\p X \times \p X$ by $\beta_{0L}.$

\section{Acknowledgements}
 
 The first author visited the Mathematics Department of the Universidade Federal de Santa Catarina at Florian\'opolis  (UFSC)  during the month of May, 2016  when part of this work was done.  His visit to UFSC was supported by a grant of Professor Visitante Especial from CAPES, Brazil.  The work was also supported by a grant from the Simons Foundation (\#349507, Ant\^onio S\'a Barreto). 
%


\begin{thebibliography}{99}
\bibitem{Agmon1}  S. Agmon. {\em Spectral theory of Schr\"odinger operators on Euclidean and on non-Euclidean spaces.} Comm. Pure Appl. Math., 39(S, suppl.):S3--S16, (1986). Frontiers of the mathematical sciences: 1985 (New York, 1985).
\bibitem{BoPe} D. Borthwick and P. Perry. {\em Scattering poles for asymptotically hyperbolic manifolds. }Trans. Amer. Math. Soc. 354, no. 3, 1215-1231, (2002).
\bibitem{ChenHa} X. Chen, A. Hassell. {\em  Resolvent and spectral measure on non-trapping asymptotically hyperbolic manifolds I: Resolvent construction at high energy.}  Comm. Partial Differential Equations 41,  no. 3, 515--578. (2016).
\bibitem{DuHo} J. Duistermaat and L. H\"ormander. {\em Fourier integral operators. II.}  Acta Math. 128,  no. 3-4, 183--269.  (1972).
\bibitem{DuGu} J. Duistermaat and V. Guillemin. {\em The spectrum of positive elliptic orators and periodic bicharacteristics.} Invent. Math. 29, 39--79 (1975).
\bibitem{Duistermaat} J. Duistermaat. {\em Fourier integral operators.}  Progress in Mathematics, 130. Birkh\"auser Boston, Inc., Boston, MA, (1996).
\bibitem{Fa} L. Faddeev. {\em Expansion in eigenfunctions of the Laplace operator in the fundamental domain of a discrete group in the Lobacevski plane.} Trudy Moscov. Mat. Obsc., vol. 17, 323--350, (1967).
\bibitem{FaPa} L. Faddeev and  B. Pavlov. {\em Scattering theory and automorphic functions.} Seminar of Steklov Math. Institute of Leningrad, vol. 27, 161--193, (1972).
\bibitem{fried0}  F. G. Friedlander. {\em Radiation Fields and hyperbolic scattering theory. } Math. Proc. Camb. Phil. Soc., 88, 483-515, (1980).
\bibitem{fried1} F.G. Friedlander. {\em Notes on the wave equation on asymptotically Euclidean manifolds. } Journ. of Func. Anal.{\bf 184}, no.1, 1-18, (2001).
\bibitem{Gr} C.R. Graham.  {\em Volume and area renormalizations for conformally compact Einstein metrics.} Rend. Circ. Mat. Palermo (2) Suppl. No. 63,  31--42, (2000).
\bibitem{GrZw}  C.R. Graham and M. Zworski.{\em Scattering matrix in conformal geometry.}  Invent. Math. 152, no. 1, 89Ð118, (2003).
\bibitem{victor}  V. Guillemin. {\em Sojourn times and asymptotic properties of the scattering matrix.} Proceedings of the Oji Seminar on Algebraic Analysis and the RIMS Symposium on Algebraic Analysis (Kyoto Univ., Kyoto, 1976). Publ. Res. Inst. Math. Sci. 12, supplement, 69--88, (1976/77).
\bibitem{Guillope} L. Guillop\'e. {\em  Fonctions z\^eta de Selberg et surfaces de g\'eom\'etrie finie.} In Zeta Functions in Geometry (Tokyo, 1990), Adv. Stud. Pure Math. 21, Kinokuniya, Tokyo,  33 --70. (1992).
\bibitem{GZ}  L. Guillop\'e and M. Zworski. {\em Scattering asymptotics for Riemann surfaces.}  Ann. of Math. (2) 145,  no. 3, 597--660. (1997). 
\bibitem{HoSa}  R. Hora and A. S\'a Barreto. {\em Inverse scattering with partial data on asymptotically hyperbolic manifolds.}  Anal. PDE 8,  no. 3, 513--559. (2015)
\bibitem{Hormander-FIO}  L. H\"ormander. {\em Fourier integral operators I.}  Acta Math. 127, no. 1-2, 79--183, (1971).
\bibitem{Hormander} L. H\"ormander. {\em The Analysis of linear partial differential operators. vol. I-IV.} Springer Verlag, (1994).
\bibitem{IK} H. Isozaki and Y. Kurylev.{\em Introduction to spectral theory and inverse problem on asymptotically hyperbolic manifolds.}  MSJ Memoirs, 32. Mathematical Society of Japan, Tokyo, (2014).
\bibitem{JZ}  L. Ji and M. Zworski. {\em Scattering matrices and scattering geodesics of locally symmetric spaces.} Ann. Sci. \'Ecole Norm. Sup. (4) 34  no. 3, 441--469, (2001).
\bibitem{JZ1}  L. Ji and M. Zworski. {\em  Correction and supplements to: "Scattering matrices and scattering geodesics of locally symmetric spaces'' [Ann. Sci. \'Ecole Norm. Sup. (4) 34 (2001), no. 3, 441--469].} Ann. Sci. \'Ecole Norm. Sup. (4) 35  no. 6, 897--901. (2002).
\bibitem{JS1}  M. Joshi and A.  S\'a Barreto.  {\em Inverse scattering on asymptotically hyperbolic manifolds.} Acta Math. 184, no. 1, 41--86, (2000).
\bibitem{LP}  P. Lax and R. Phillips. {\em Scattering Theory for Automorphic Functions. } Ann. of Math. Stud., 87. Princeton Univ. Press, Princeton, N J, (1976).
\bibitem{lp0}  P. Lax and R. Phillips. {\em Translation representation for the solutions of the non-Euclidean wave equation.} Comm. on Pure and Appl. Math. 32 , 617--667, (1979).
\bibitem{lax} P. Lax. {\em The Radon transform and translation representation.} Journ. Evol. Equ. 1, 311--323, (2001).
\bibitem{MU} R. Melrose and G. Uhlmann. {\em Lagrangian intersection and the Cauchy problem.}  Comm. Pure Appl. Math. 32 (1979), no. 4, 483?519.
\bibitem{MM} R. Mazzeo and R. Melrose. {\em Meromorphic extension of the resolvent on complete spaces with asymptotically constant negative curvature.} J. Funct. Anal. 75, no. 2, 260--310, (1987).
\bibitem{M}  R. Melrose.  {\em Geometric scattering theory}. Stanford Lectures, Cambridge University Press, (1995). 
\bibitem{MSV}  R. Melrose,  A. S\'a Barreto and A.  Vasy. {\em Analytic continuation and semiclassical resolvent estimates on asymptotically hyperbolic spaces.}  
Comm. in PDE. 39, no. 3, 452--511, (2014).
\bibitem{Perry1}  P. Perry.  {\em The Laplace operator on a hyperbolic manifold. I.  Spectral and scattering theory.}
J. Funct. Anal., 75:161--187, (1987).
\bibitem{Perry2}  P. Perry.   {\em The Laplace operator on a hyperbolic manifold. II. Eisenstein series and the scattering matrix.} J. Reine. Angew. Math., 398:67?91, (1989).
\bibitem{pestov-uhlmann}  L. Pestov and G.  Uhlmann. {The scattering relation and the Dirichlet-to-Neumann map.}  Recent advances in differential equations and mathematical physics, 249--262, Contemp. Math., 412, Amer. Math. Soc., Providence, RI, (2006).
\bibitem{sbhrf} A. S\'a Barreto. {\em Radiation fields, scattering and inverse scattering on asymptotically hyperbolic manifolds.} 
Duke Math. Journal Vol 129, No. 3, 407-480, (2005).
\bibitem{SW}  A. S\'a Barreto and J. Wunsch. {\em The radiation field is a Fourier integral operator. } Ann. Inst. Fourier (Grenoble) 55, no. 1, 213--227, (2005).
\bibitem{SaWang} A. S\'a Barreto and Y. Wang. {\em The semiclassical resolvent on conformally compact manifolds with variable curvature at infinity.} Comm. in Partial Differential Equations 41, no.8, 1230--1302, (2016).
\bibitem{steuhl1}  P. Stefanov and G. Uhlmann. {\em  Rigidity for metrics with the same lengths of geodesics.}  Math. Res. Lett. 5, 83--96, (1988).
\bibitem{steuhl2}  P. Stefanov and G. Uhlmann. {\em Stability estimates for the X-ray transform of tensor fields and boundary rigidity.}  Duke Math. J.123, 445--467, (2004).
\bibitem{steuhl3}   P. Stefanov and G. Uhlmann. {\em Boundary and lens rigidity, tensor tomography and analytic microlocal analysis. } Algebraic analysis of differential equations from microlocal analysis to exponential asymptotics, 275--293, Springer, Tokyo, (2008). 
\bibitem{gunther2} J. Sylvester, J and G. Uhlmann. {\em Inverse problems in anisotropic media.}  Contemp. Math. 122, 105--117, (1991)
\bibitem{gunther1}  G. Uhlmann. {\em The Cauchy data and the scattering relation.}  Geometric methods in inverse problems and PDE control, 263Ð287, IMA Vol. Math. Appl., 137, Springer, New York, (2004). 
\bibitem{Vasy}   A. Vasy. {\em Microlocal analysis of asymptotically hyperbolic spaces and high-energy resolvent estimates.}  Inverse problems and applications: inside out. II, 487--528, Math. Sci. Res. Inst. Publ., 60, Cambridge Univ. Press, Cambridge, (2013).
\bibitem{Wang} Y. Wang. {\em Resolvent and radiation fields on non-trapping asymptotically hyperbolic manifolds.}   arXiv:1410.6936
\end{thebibliography}
\end{document}